\newtheorem{thm}{Theorem}[section]
\newtheorem{prop}[thm]{Proposition}
\newtheorem{lemma}[thm]{Lemma}
\newtheorem{cor}[thm]{Corollary}
\newtheorem{rem}[thm]{Remark}
\theoremstyle{definition}
\numberwithin{equation}{section}
\newcommand{\R}{\mathbb{R}}
\newcommand{\RR}{\mathbb{R}}
\newcommand{\M}{{\mathcal M}}
\newcommand{\eps}{\varepsilon}
\def\H{\mathcal{H}}
\def\M{\mathcal{M}}
\def\U{\mathcal{U}}
\def\d{\, {\rm d}}
\def\Ue{\U^\eps}
\def\Me{\M^\eps}
\def\dH{\partial H}
\begin{document}

\title[Diffuse Interface Methods for Inverse Problems]{Diffuse Interface Methods for Inverse Problems: \\ Case Study for an Elliptic  Cauchy Problem}
\author[M. Burger]{Martin Burger$^{\dag,+}$}
\author[O. L. Elvetun]{Ole Løseth Elvetun$^*$}
\author[M. Schlottbom]{Matthias Schlottbom$^{\dag,\circ}$}
\thanks{$^\dag$ Institute for Computational and Applied Mathematics,
University of Münster, Einsteinstr. 62, 48149 M\"unster, Germany.\\
$^+$ Cells in Motion Cluster of Excellence, University of Münster. \\
$^*$ Dept. of Mathematical Sciences and Technology, Norwegian University of Life Sciences\\
$^\circ$ Corresponding author\\
Email: {\tt $\{$burger,schlottbom$\}$@wwu.de, ole.elvetun@nmbu.no}
}
\date{\today}

\begin{abstract}
Many inverse problems have to deal with complex, evolving and often not exactly known geometries, e.g. as domains of forward problems modeled by partial differential equations. This makes it desirable to use methods which are robust with respect to perturbed or not well resolved domains, and which allow for efficient discretizations not resolving any fine detail of those geometries. For forward problems in partial differential equations methods based on diffuse interface representations gained strong attention in the last years, but so far they have not been considered systematically for inverse problems. In this work we introduce a diffuse domain method as a tool for the solution of variational inverse problems. As a particular example we study ECG inversion in further detail. ECG inversion is a linear inverse source problem with boundary measurements governed by an anisotropic diffusion equation, which naturally cries for solutions under changing geometries, namely the beating heart. 

We formulate a regularization strategy using Tikhonov regularization and, using standard source conditions, we prove convergence rates. A special property of our approach is that not only operator perturbations are introduced by the diffuse domain method, but more important we have to deal with topologies which depend on a parameter $\eps$ in the diffuse domain method, i.e. we have to deal with $\eps$-dependent forward operators and $\eps$-dependent norms. In particular the appropriate function spaces for the unknown and the data depend on $\eps$. This prevents to apply some standard convergence techniques for inverse problems, in particular interpreting the perturbations as data errors in the original problem does not yield suitable results. We consequently develop a novel approach based on saddle-point problems.

The numerical solution of the problem is discussed as well and results for several computational experiments are reported. In particular investigations of convergence rates support our theoretical findings.
\end{abstract}

\maketitle

{\footnotesize
{\noindent \bf Keywords:} 
Diffuse domain method, inverse problems, variational regularization,  convergence analysis, ECG inversion, Cauchy problem.
}

{\footnotesize
\noindent {\bf AMS Subject Classification:}  
35R30 
35J20 
65N85 
65K10 
}

\section{Introduction}

Mathematical models based on differential and integral equations to be solved on complex or time-varying domains play an important role in many applications, in particular in biomedicine due to the complexity and inherent motion of living systems. A straight-forward approach towards the numerical solution of such problems is to resolve the geometries by building appropriate grids and subsequent computation on those e.g. via finite element or finite volume methods. Due to the high complexity of building grids and interpolation issues between different time steps several approaches have emerged that avoid the explicit resolution of the geometry and rather work on a fixed grid, either directly by adapting the discretization scheme (cf. \cite{BastianEngwer2009,HackbuschSauter1997,LiehrPreusserRumpfSauterSchwen2009}) or by implicitly representing the geometry in terms of characteristic functions, level set functions or diffuse interfaces (cf. \cite{bertalmio2003variational,burger2009finite,GlowinskiPanPeriaux1994,LeVequeLin1994,LiLowengrubRaetzVoigt2009,LervagLowengrub2014,ratz2006pde}). In the latter approach the interface is encoded via a function $\varphi^\eps$ that takes values close to $+1$ in the interior and $-1$ in the exterior of the domain to be represented, with an interfacial layer of smooth transition, which has a size of order $\eps$. This approach is highly motivated by Cahn-Hilliard and phase-field models in materials science (cf. \cite{alikakos1994convergence,cahn1958free,caginalp1989stefan}).

Analogous issues related to complex geometry frequently and increasingly arise in many inverse problems, e.g. in medical imaging shapes are obtained from segmentation of an anatomical imaging via MR or CT and subsequently used for other inversion tasks such as emission tomography or electromagnetic inversion (like EEG, MEG, ECG, MCG). Diffuse interface methods have however hardly been considered (cf. \cite{Styles}), and in particular their convergence analysis has not been worked out in relation to regularization methods, which introduce another small parameter. To be more precise consider canonical inverse problems of the form
\begin{equation}
	A(u) = f, 
\end{equation}
where $A:{\mathcal X} \rightarrow {\mathcal Y}$ is the forward operator between function spaces and $f$ are noisy data. Those are to be solved by variational regularization techniques, which consist in minimizing 
\begin{equation} \label{variationalregularization}
	J(u) = \Vert A(u)-f \Vert_{\mathcal Y}^q + \alpha \Vert u - u_*\Vert_{\mathcal X}^r,
\end{equation}
with $q, r \geq 1$ and $u_*$ being a prior for the variable $u$, potentially equal to zero. There are three potential dependencies on the domain $D$. The first as direct dependence of the operator upon $D$, e.g. via partial differential equations to be solved on $D$ in order to evaluate $A$. 
The diffuse interface method will introduce an approximation of the form
\begin{equation} \label{variationalregularizationeps}
	J^\eps(u) = \Vert A^\eps(u)-f^\eps \Vert_{\mathcal Y^\eps}^q + \alpha \Vert u - u_*\Vert_{\mathcal X^\eps}^r,
\end{equation}
with appropriate perturbations of operator, data, and norms. In particular the last fact creates novel theoretical questions, since the topologies of the $\eps$-dependent space might not be equivalent to the ones of the original spaces $\mathcal{X}$ and $\mathcal{Y}$ as we shall see below. The convergence analysis thus needs to go beyond the current state of the theory and in this paper we use a novel approach based on saddle-point formulations. We also mention that our analysis does not mainly target the case of $\eps \rightarrow 0$ for fixed $\alpha$, which could be derived with similar techniques as used here and in \cite{BES2014}.

We mention that from a practical point of view there are further reasons that can make diffuse interface methods attractive. A quite peculiar property is that due to the ill-posedness of most inverse problems and the consequently limited resolution of regularization methods high frequency information is lost. Intuitively this should also concern fine details in the geometry, hence smearing out the geometry information might not harm the quality of reconstructions or even further stabilize the problem. Another aspect is uncertainty in geometries, which may concern the domain (e.g. from incorrect segmentations) as well as the measurement locations (e.g. electrode positions on the body surface in EEG and ECG). A diffuse interface that averages the model over different possible domain shapes seems hence more appropriate than an exact treatment of the interface.
A detailed study of these aspects is left to future research.

In the construction of diffuse interface methods we follow the approach in \cite{BES2014}. 
During the whole paper we shall assume to have a representation of an unknown shape $D \subset \Omega$ via its signed distance function $d_D$, i.e.,
\begin{equation}
	d_D(x) = \left\{ \begin{array}{ll} + \text{ dist}(x,\partial D) & \text{if } x \in \Omega \setminus D, \\ - \text{ dist}(x,\partial D) & \text{if } x \in D. \end{array}\right.
\end{equation} The diffuse interface is then constructed via 
\begin{equation} \label{phiepsilon}
	\varphi^\eps = S\left(-\frac{d_D}{\eps}\right)  
\end{equation}
for $\eps > 0$ small and $S$ being a sigmoidal function, i.e., increasing with 
$	\lim_{t \rightarrow \pm \infty} S(t) =  \pm 1$.
As $\eps$ tends to zero, $S$ converges to the sign function, and hence $\varphi^\eps$ formally converges to 
\begin{equation} \label{phizero}
	\varphi^0(x) =  \left\{ \begin{array}{ll}  - 1 & \text{if } x \in \Omega \setminus D, \\ +1 & \text{if } x \in D. \end{array}\right.
\end{equation}
Indeed this convergence can easily be made rigorous in $L^p$-spaces. 
In this work we use the sigmoidal function $S:\RR\to\RR$ defined by $S(t)=t/|t|$ for $|t|\geq 1$ and $S(t)=t$ for $|t|<1$; more general choices are allowed and we refer the reader to \cite{BES2014}. 
Note that the support of $\nabla \varphi^\eps$ is restricted to an $\eps$-neighborhood of $\partial D$ and that $\varphi^\eps$ is a Lipschitz-continuous function bounded by $\pm 1$.


In order to obtain a representation with diffuse interfaces, we mainly need to discuss the approximation of integrals over the domain and its boundary. With such we can obviously treat most relevant issues: integral equations, inverse problems for partial differential equations via weak formulations, data fidelities and regularization terms in variational regularization methods. The only relevant case that needs additional considerations seems to be the different use of tangential and normal derivatives on curves or surfaces, which we postpone to future considerations. The key idea to approximate such integrals is a weighted averaging of the integrals on $\{ d_D < t \}$ instead of the original domain 
$\{ d_D < 0 \}$ only (and similar for boundary integrals). Since 
$\frac{1}{2\eps} S'(\frac{\cdot}\eps)$ approximates a concentrated distribution at zero, we expect
\begin{align*}
\int_D g(x)\d x &= \int_{\{ d_D < 0 \}} g(x)\d x 
= \int_{-\infty}^\infty \frac{1}{2\eps} S'(-\frac{t}\eps) \int_{\{ d_D < 0 \}} g(x)\d x\d t \\
&\approx \int_{-\infty}^\infty \frac{1}{2\eps} S'(-\frac{t}\eps) \int_{\{ d_D < t \}} g(x)\d x\d t \\
&= \frac{1}2 \int_{-1}^{1} \int_{\{\varphi^\eps > s\}} g(x)\d x \d s, 
\end{align*}
where we have used the substitution $s=S(-\frac{t}\eps)$ in the last term. Now the layer cake-representation can further be used for given integrable $g$  to rewrite 
$$ \int_{-1}^{1} \int_{\{\varphi^\eps > s\}} g(x)\d x\d t= \int_\Omega \int_{-1}^{\varphi^\eps(x)}  \d s g(x)\d x =\int_\Omega (1+\varphi^\eps)(x) g(x) \d x. $$

By an analogous computation we obtain for the boundary integral
$$ \int_{\partial D} g(x)\d \sigma(x) \approx  \frac{1}2 \int_{-1}^{1} \int_{\{\varphi^\eps = s\}} g(x)\d \sigma(x) \d s, $$
which can be simplified via the co-area formula to 
$$  \int_{-1}^1 \int_{\partial \{\varphi^\eps = s\}} g(x) \d\sigma(x) \d t =  \int_\Omega g(x) |\nabla \varphi^\eps(x)|\d x.$$
Detailed convergence results for these kind of integrals can be found in \cite{BES2014} and are recalled in the appendix.

Thus, integral functionals in \eqref{variationalregularization} of the form
\begin{equation}
	{\mathcal F}_{dom}(v) = \int_D \Psi(v,\nabla v,\ldots,\nabla^m v)\d x
\end{equation}
are approximated in a straight-forward way as
\begin{equation}
	{\mathcal F}_{dom}^{\eps}(v) = \int_\Omega \Psi(v,\nabla v,\ldots,\nabla^m v) (1+\varphi^\eps) \d x.
\end{equation}
Functionals on surfaces are less straight-forward with the exception of simple $L^p$-type regularization functional 
$$ {\mathcal F}_{bound}(v) = \int_{\partial D} \Psi(x,v) \d\sigma(x), $$
which have an obvious approximation
$$ {\mathcal F}_{bound}^\eps(v) = \int_\Omega \Psi(\cdot,v) |\nabla \varphi^\eps(x)| \d x. $$
Gradient or higher-order derivative based regularization on surfaces is usually formulated in terms of tangential derivatives, whose diffuse approximation solely based on $\varphi^\eps$ is more involved. In this paper we will however restrict our attention to $L^2$-norms on the boundary of a domain, which can be approximated as ${\mathcal F}_{bound}$ above. From the construction we see however that the diffuse version of an $L^2$-norm (defined as the square root of ${\mathcal F}_{bound}$ with square $\Psi$) has an important topological difference to the $L^2$-norm on the sharp interface. Note that the latter roughly corresponds to an $H^{1/2}$-norm on the domain via trace theorems, hence the diffuse norm induces a weaker topology. 

In the remainder of the paper we work out the convergence analysis of the diffuse interface approximation \eqref{variationalregularizationeps} in the example of ECG inversion, i.e. the solution of an elliptic Cauchy problem. This problem is well-studied on the one-hand from a theoretical point of view, but on the other hand leaves a clear practical challenge of efficient solution on different complex domains (moving hearts). More importantly, it includes a lot of the potential challenges for the convergence analysis: Both the unknown as well as the data are functions on parts of the boundary to be approximated by diffuse interfaces and the forward operator is also defined via a partial differential equation on the (diffuse) domain. We discuss the problem and its diffuse approximation in Section 2, before we proceed to the convergence analysis in Section 3. We show that the diffuse regularized solution converges to the correct solution as $\alpha$, $\eps$ and the noise level $\delta$ tend to zero under standard conditions on $\alpha$ and roughly for $\eps \sim \alpha$ (or some higher power of $\alpha$). In the case of correct solutions satisfying a standard source condition (cf. \cite{EHN96}) and a standard choice $\alpha \sim \delta$ we obtain an optimal convergence rate if $\eps \sim \delta^{2/3}$. This confirms our intuition that $\eps$ can be chosen rather large for inverse problems in presence of noise. Finally we discuss the numerical solution of the problem in Section 5 and provide a collection of experiments, whose results support our theory respectively indicate that one might obtain even better convergence rates with respect to $\eps$.

\section{Motivating Example: ECG Inversion}

In order to clarify the application of the diffuse domain method to the solution of an inverse problem, we study the following setup encountered in the reconstruction of epicardial 
potentials from ECG body surface potential measurements. Given data $f$, which are samples of 
the potential $v$ (more precisely its Dirichlet trace on the body surface $\partial B$) we want to reconstruct 
the epicardial potential, i.e., the trace of $v$ on $\partial H$, where $H \subset B$ is the heart volume. Here we use a so-called 
flux-based formulation, i.e., we use the Neumann boundary value $u$ on $\partial H$ as the 
unknown for the inversion, i.e., the forward model in weak form is
\begin{equation}
	\int_{D} M \nabla v \cdot \nabla w \d x = \int_{\partial H} u w \d\sigma 
	\quad \text{for all } w \in H_\diamond^1(D). \label{eq:sharpweakform}
\end{equation}
with $D=B\setminus \overline{H}$ and 
$$H^{1}_\diamond(D)=  \{ w\in H^{1}(D): \int_{\partial H} w \d\sigma = 0\}.$$
This formulation has been found to be quite appealing in the ECG-inversion problem, 
in particular when variational regularization is formulated on $u$ rather than the 
Dirichlet trace of $v$ (cf. \cite{ghosh2009application,khoury1994use,throne2000comparison}). The epicardial potential can be 
computed subsequently from the forward model. Note that \eqref{eq:sharpweakform} is 
the weak formulation of the anisotropic Laplace equation 
$\nabla \cdot ( M \nabla v) = 0$ with Neumann boundary conditions, with zero flux 
on $\partial B$. The latter is natural due to the insulation of the body. 


In the whole manuscript we will assume the following ellipticity condition:
There exists a constant $m>0$ such that
\begin{align}\label{eq:ellipticity}
 m |\xi|^2 \leq \xi\cdot M(x)\xi \leq \frac{1}{m} |\xi|^2\quad\text{for all } x,\xi\in \RR^n.
\end{align}
Moreover, we will always assume the following regularities: $\partial D \in C^{3,1}$, $M\in W^{2,\infty}(\Omega)$ and $v\in W^{3,\infty}(D)$ being the solution of \eqref{eq:sharpweakform}. Thus, $n\cdot M\nabla v \in W^{2,\infty}(\partial D)$. These regularity assumptions can be weakened at the cost of worse approximation properties of the diffuse domain method, see some remarks below and \cite{BES2014}.

\begin{lemma}\label{lem:existence}
 Let \eqref{eq:ellipticity} hold. Then, for any $u\in L^2(\partial H)$, there exists a unique $v\in H^1_\diamond(D)$ such that \eqref{eq:sharpweakform} holds. In particular, there exists a constant $C>0$ such that
 \begin{align*}
  \| v \|_{H^1(D)} \leq C \|u\|_{L^2(\partial H)}.
 \end{align*}
\end{lemma}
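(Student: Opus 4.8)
The plan is to verify the hypotheses of the Lax--Milgram theorem on the space $H^1_\diamond(D)$. Define the bilinear form $a(v,w)=\int_D M\nabla v\cdot\nabla w\d x$ and the linear functional $\ell(w)=\int_{\partial H} u\, w\d\sigma$. First I would record that $H^1_\diamond(D)$ is a closed subspace of $H^1(D)$, and hence itself a Hilbert space: the trace operator $H^1(D)\to L^2(\partial H)$ is continuous, so the constraint functional $w\mapsto\int_{\partial H}w\d\sigma$ is continuous and its kernel is closed.

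Boundedness of $a$ and $\ell$ is routine. The upper bound in the ellipticity condition \eqref{eq:ellipticity} gives $\abs{a(v,w)}\le \tfrac1m\norm{\nabla v}_{L^2(D)}\norm{\nabla w}_{L^2(D)}\le\tfrac1m\norm{v}_{H^1(D)}\norm{w}_{H^1(D)}$, while the trace theorem together with Cauchy--Schwarz yields $\abs{\ell(w)}\le\norm{u}_{L^2(\partial H)}\norm{w}_{L^2(\partial H)}\le C_{\mathrm{tr}}\norm{u}_{L^2(\partial H)}\norm{w}_{H^1(D)}$.

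The crux is coercivity. The lower ellipticity bound gives only control of the seminorm, $a(v,v)\ge m\norm{\nabla v}_{L^2(D)}^2$, so I need a Poincaré-type inequality asserting that on $H^1_\diamond(D)$ the gradient seminorm is equivalent to the full $H^1$-norm, i.e.\ there is $C_P>0$ with $\norm{w}_{L^2(D)}\le C_P\norm{\nabla w}_{L^2(D)}$ for all $w\in H^1_\diamond(D)$. This is where the mean-zero constraint on $\partial H$ is essential: the only functions annihilated by the gradient seminorm are constants, and a nonzero constant has nonzero integral over $\partial H$, so it is excluded from $H^1_\diamond(D)$. I would prove the inequality by the standard contradiction-and-compactness argument: if it failed there would exist $w_k\in H^1_\diamond(D)$ with $\norm{w_k}_{L^2(D)}=1$ and $\norm{\nabla w_k}_{L^2(D)}\to0$; boundedness in $H^1(D)$ and the Rellich--Kondrachov theorem give a subsequence converging in $L^2(D)$ to some $w$ with $\nabla w=0$, hence $w$ constant, while continuity of the trace forces $\int_{\partial H}w\d\sigma=0$ and therefore $w=0$, contradicting $\norm{w}_{L^2(D)}=1$. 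Combining, $a(v,v)\ge m\norm{\nabla v}_{L^2(D)}^2\ge\tfrac{m}{1+C_P^2}\norm{v}_{H^1(D)}^2$.

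With boundedness and coercivity in hand, Lax--Milgram yields a unique $v\in H^1_\diamond(D)$ solving \eqref{eq:sharpweakform}, and testing with $w=v$ gives $\tfrac{m}{1+C_P^2}\norm{v}_{H^1(D)}^2\le a(v,v)=\ell(v)\le C_{\mathrm{tr}}\norm{u}_{L^2(\partial H)}\norm{v}_{H^1(D)}$, whence the claimed estimate with $C=\tfrac{(1+C_P^2)C_{\mathrm{tr}}}{m}$. The main obstacle is the Poincaré inequality on the constrained space; once the compactness argument is set up, everything else is standard.
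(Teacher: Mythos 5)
Your proof is correct and follows essentially the same route as the paper: the paper likewise invokes the Poincar\'e inequality on $H^1_\diamond(D)$ to make the bilinear form coercive (an inner product), uses the trace theorem to see that the right-hand side is a bounded functional, and concludes by Lax--Milgram. The only difference is that you additionally supply the compactness-and-contradiction proof of the Poincar\'e inequality, which the paper takes as known.
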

\begin{proof}
 Due to the Poincar\'e inequality the bilinear form on the left-hand side of \eqref{eq:sharpweakform} defines an inner product on $H^1_\diamond(D)$. For $u\in L^2(\partial H)$ the right-hand side of \eqref{eq:sharpweakform} defines a bounded linear functional on $H^1_\diamond(D)$. An application of the Lax-Milgram lemma yields the assertion.
\end{proof}




\subsection{Forward map and inverse problem.} 
We define a linear operator
\begin{equation}\label{eq:sharpIPdef}
	F:L^2(\partial H)\to L^2(\partial B), \quad Fu=v_{\mid \partial B}
\end{equation}
with $v\in H^1_\diamond(D)$ being the solution to \eqref{eq:sharpweakform} with $u\in L^2(\partial H)$. The inverse problem we are concerned with is the following. For given $f \in L^2(\partial B)$ determine $u\in L^2(\partial H)$ such that
\begin{align}\label{eq:sharpIP}
 F u = f\qquad \text{in } L^2(\partial B).
\end{align}
The following lemma collects some basic properties of the forward map $F$.
\begin{lemma}\label{lem:uniqueness}
 The forward map $F:L^2(\partial H)\to L^2(\partial B)$ defined by \eqref{eq:sharpIPdef} is linear, injective, bounded and compact. 
\end{lemma}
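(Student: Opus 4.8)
The plan is to dispatch linearity, boundedness and compactness by standard functional-analytic arguments, reserving the real work for injectivity, which is precisely the uniqueness assertion for the underlying elliptic Cauchy problem.

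\emph{Linearity} follows at once from the linearity of \eqref{eq:sharpweakform} together with the uniqueness in Lemma~\ref{lem:existence}: if $v_1,v_2\in H^1_\diamond(D)$ solve the weak form for data $u_1,u_2$, then $\lambda_1 v_1+\lambda_2 v_2$ solves it for $\lambda_1 u_1+\lambda_2 u_2$, and taking Dirichlet traces on $\partial B$ gives $F(\lambda_1 u_1+\lambda_2 u_2)=\lambda_1 Fu_1+\lambda_2 Fu_2$. For \emph{boundedness} I would factor $F=\gamma_{\partial B}\circ \Sigma$, where $\Sigma:L^2(\partial H)\to H^1(D)$, $u\mapsto v$, is the solution operator and $\gamma_{\partial B}:H^1(D)\to L^2(\partial B)$ is the Dirichlet trace. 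The estimate in Lemma~\ref{lem:existence} bounds $\Sigma$, the trace theorem bounds $\gamma_{\partial B}$, and hence $\|Fu\|_{L^2(\partial B)}\le C\|u\|_{L^2(\partial H)}$.

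\emph{Compactness} I would obtain by exploiting the extra smoothness of the trace. The operator $\Sigma$ maps into $H^1(D)$, the trace onto the compact manifold $\partial B$ maps $H^1(D)$ boundedly into $H^{1/2}(\partial B)$, and the embedding $H^{1/2}(\partial B)\hookrightarrow L^2(\partial B)$ is compact by Rellich's theorem (here the regularity $\partial D\in C^{3,1}$ is more than enough). Writing $F$ as the composition of these three maps, one factor of which is compact, shows that $F$ is compact.

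\emph{Injectivity} is the crux and the expected main obstacle. Suppose $Fu=0$, i.e. $v|_{\partial B}=0$. Testing \eqref{eq:sharpweakform} first against functions compactly supported in $D$ gives $\nabla\cdot(M\nabla v)=0$ in $D$, and then against functions supported near $\partial B$ (which are unconstrained there, since the $\diamond$-condition only fixes the mean on $\partial H$) gives the natural boundary condition $n\cdot M\nabla v=0$ on $\partial B$. Thus $v$ has vanishing Cauchy data on $\partial B$. Since $M$ is uniformly elliptic by \eqref{eq:ellipticity} and Lipschitz (indeed $M\in W^{2,\infty}$), the unique continuation principle for second-order elliptic equations forces $v\equiv 0$ in the connected domain $D$. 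Reinserting $v=0$ into \eqref{eq:sharpweakform} leaves $\int_{\partial H}u\,w\,\d\sigma=0$ for all $w\in H^1_\diamond(D)$; as the traces of such $w$ exhaust the mean-zero functions of $L^2(\partial H)$, this forces $u$ to be constant, and hence $u=0$ on the natural (mean-zero) space of admissible fluxes singled out by the $\diamond$-constraint. I expect the delicate points to be invoking boundary unique continuation with merely Lipschitz coefficients—comfortably covered by the assumed regularity $M\in W^{2,\infty}$ and $\partial D\in C^{3,1}$—and correctly accounting for the constant nullspace hidden in the $\diamond$-normalization.
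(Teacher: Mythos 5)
Your proposal follows essentially the same route as the paper: compactness (and hence boundedness) comes from compactness of the trace operator $H^1(D)\to L^2(\partial B)$ composed with the solution operator of Lemma~\ref{lem:existence}, and injectivity is reduced to uniqueness for the elliptic Cauchy problem --- the relevant function ($v_1-v_2$ in the paper, $v$ itself in your $Fu=0$ formulation) has vanishing Dirichlet and conormal data on $\partial B$, and unique continuation for Lipschitz coefficients (the paper cites Miranda) forces it to vanish in $D$. Your explicit factorization through $H^{1/2}(\partial B)$ and Rellich is simply the standard proof of the trace compactness that the paper invokes wholesale, so no genuine difference there.

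The one point where you depart from the paper is the final step, and there your version is the more accurate one. The paper passes directly from $w=v_1-v_2=0$ to $u_1=u_2$; as you observe, $v=0$ only yields $\int_{\partial H}u\,w\,\d\sigma=0$ for all $w\in H^1_\diamond(D)$, and since the traces of such $w$ on $\partial H$ are dense in (not, as you write, ``exhaust'' --- but density suffices for the orthogonality argument) the mean-zero subspace of $L^2(\partial H)$, one can only conclude that $u$ is constant. This is not a removable defect of the argument: for constant $u$ the right-hand side of \eqref{eq:sharpweakform} vanishes identically on the test space $H^1_\diamond(D)$, so the corresponding solution is $v=0$ and $Fu=0$. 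Hence constants genuinely lie in the kernel of $F$ as defined in \eqref{eq:sharpIPdef} on all of $L^2(\partial H)$, and the injectivity claim holds only modulo constants, i.e.\ on the mean-zero subspace of admissible fluxes --- exactly the restriction your proof makes explicit and that the paper's statement and proof gloss over.
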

\begin{proof}
Linearity is obvious. Compactness, and hence boundedness, follows from compactness of the trace operator $H^1(D)\to L^2(\partial B)$ and Lemma~\ref{lem:existence}.
To show injectivity, let $u_1, u_2\in L^2(\partial H)$ such that $F u_1 = f= F u_2$, and denote by $v_1$, $v_2$ the corresponding solutions to \eqref{eq:sharpweakform}. 
Then the difference $w=v_1-v_2$ is a weak solution to the Cauchy problem
\begin{align*}
 -{\rm div}(M\nabla w)=0\quad\text{in }D,\qquad n\cdot M\nabla w=0  \text{ on } \partial B,\quad w=0 \text{ on }\partial B.
\end{align*}
Since $M$ is Lipschitz, the Cauchy problem is uniquely solvable \cite{Miranda70},
i.e., $w=0$ and $u_1=u_2$.
\end{proof}
In view of Lemma~\ref{lem:uniqueness} and since it is easy to see that the range of $F$ is infinite-dimensional, the inverse problem \eqref{eq:sharpIP} is ill-posed, and some sort of regularization is needed for a stable inversion of \eqref{eq:sharpIP}.
In the whole manuscript, we denote by $f^\dagger, v^\dagger$ and $u^\dagger$ the exact data and solutions respectively.

\subsection{Variational Regularization with Sharp Interfaces}
As basic regularization method we consider the following Tikhonov type functional
\begin{align}\label{eq:Tikhonov}
	J(u,v) = \frac{1}{2} \|v-f^\delta\|_{L^2(\partial B)}^2 + \frac{\alpha}{2} \| u\|^2_{L^2(\partial H)} \quad \text{subject to \eqref{eq:sharpweakform}},
\end{align}
where $f^\delta \in L^2(\partial B)$ represents noisy data for which we assume that
\begin{align}\label{eq:noise_level}
 \|f^\dagger - f^\delta \|_{L^2(\partial B)} \leq \delta.
\end{align}

As pointed out in the introduction, in applications we have in mind the sharp interfaces $\partial B$ and $\partial H$ are not known exactly, and we aim at employing the diffuse integrals introduced above. The quadratic case seems to be sufficient to understand the main difficulties arising from the diffuse approximation, extensions to other $L^p$-norms can be made with analogous arguments as in the sharp interface case.
%
%
%
%
%

\begin{rem}\label{rem:convergence}
Considering the reduced functional $\hat J(u) = J(u,F(u))$, which is quadratic and strictly convex, we obtain from \cite[Thm~5.2]{EHN96} that the minimizers $u_{\alpha,\delta}$ of $\hat J$ with $f^\dagger$ replaced by $f^\delta$ converge to $u^\dagger$ in $L^2(\partial H)$ as long as $u^\dagger \in L^2(\partial H)$, $\|f^\dagger-f^\delta\|_{L^2(\partial B)}\leq \delta$ and $\alpha\to 0$ is chosen such that $\delta^2/\alpha\to 0$ as $\delta\to 0$, i.e.,
 $\lim_{\delta\to 0} u_{\alpha,\delta} = u^\dagger$.
\end{rem}

\subsection{Variational Regularization with Diffuse Interface}

In the following we discuss a diffuse approximation of the variational problems introduced above. In order to distinguish the two different parts of the boundary $\partial D=\partial B \cup \partial H$ we choose a weight $\gamma_H$ that equals one in a neighborhood of $\partial H$
and zero in a neighborhood of $\partial B$. Vice versa, we choose a second weight $\gamma_B$, which equals one in a neighborhood of the measurement locations on $\partial B$
and vanishes in a neighborhood of $\partial H$.

\subsubsection{Sobolev Spaces}
To define a suitable function space, let us introduce the scalar product
\begin{align*}
	\langle v, w \rangle_{\H^\eps} 
	=\langle \nabla v, \nabla w \rangle_{\omega^\eps} +\langle v, w \rangle_{\omega^\eps} =\int_{\Omega} ( \nabla v \cdot \nabla w + v w)\omega^\eps \d x,
\end{align*}
where $\omega^\eps=(1+\varphi^\eps)/2$, and the corresponding weighted Sobolev space defined by
\begin{align*}%
	\H^\eps := \{v \in L^2({\Omega})| \|v\|_{\H^\eps}^2=\langle v,v\rangle_{\H^\eps} < \infty\}.
\end{align*}
Note that we tacitly identify functions $v$ and $w$ if $v=w$ on ${\rm supp}(\omega^\eps)$ in order to make $\|\cdot\|_{\H^\eps}$ a norm. 
Moreover, we denote by $L^p(\omega^\eps)=L^p(\Omega;\omega^\eps)$ and $W^{k,p}(\omega^\eps)=W^{k,p}(\Omega,\omega^\eps)$ the corresponding weighted Lebesgue and Sobolev spaces; see \cite{BES2014} for details. In particular $\H^\eps=W^{1,2}(\omega^\eps)$. We will also write $L^p(\tilde \Omega;\gamma)$ with some weighting function $\gamma$ and $\tilde \Omega \subset \Omega$ to denote the corresponding weighted Lebesgue space.
One observes that due to the properties of $\omega^\eps$, we have $\sqrt{2} \|v\|_{\H^\eps} \geq \|v\|_{H^1(D)}$.
Thus, any uniform estimate and convergence in the norm of $\H^\eps$ can be transfered immediately to the norm of $v$ in $H^1(D)$, which is a relevant quantity to understand the approximation properties; for further details on the spaces $\H^\eps$ see also \cite{BES2014}.
%
For the interface variable $u$ we consider the space $\Ue=L^2(\gamma_H|\nabla\omega^\eps|)$ with corresponding inner product $\langle\cdot,\cdot\rangle_{\Ue}$; and for the measurements $f$ we use $\Me=L^2(\gamma_B|\nabla\omega^\eps|)$ with corresponding inner product $\langle\cdot,\cdot\rangle_{\Me}$; i.e. for $u,q\in\Ue$ and $f,v\in\Me$
$$
 \langle u,q\rangle_{\Ue} = \int_{\Omega} uq |\nabla \omega^\eps|\gamma_H\d x,\qquad\langle v,f\rangle_{\Me} = \int_{\Omega} vf |\nabla \omega^\eps|\gamma_B\d x.
$$
As above, we identify functions $u,q\in\Ue$ if $u=q$ on ${\rm supp}(|\nabla\omega^\eps|\gamma_H)$.
The diffuse trace lemma~\ref{lem:diffusetrace} shows that the embedding $\H^\eps\hookrightarrow \Ue$ is continuous.
For appropriate normalization, we will also consider space 
\begin{align}\label{eq:hspace}
 \H^\eps_{\diamond}=\{v\in \H^\eps: \langle v,1\rangle_{\Ue} =0\}. 
\end{align}
%
As $\partial D$ is smooth, there exists a continuous extension $E_{D,\Omega}:H^1(D)\to H^1(\Omega)$ \cite{Adams1975}, and we will write $v$ instead of $E_{D,\Omega}v$ to evaluate functions in $H^1(D)$ in $\Omega$.

\subsubsection{Extensions constant off the interface}\label{sec:extensions}

We consider extensions constant off the interfaces $\partial H$ and $\partial B$, respectively. 
Note that for $0<\eps \leq \eps_0$, with $\eps_0$ sufficiently small, which we will assume throughout the paper, and for each $x\in {\rm supp}(|\nabla\omega^\eps|)$ there exists a unique $\bar x\in\partial D$ such that $x=\bar x + d_D(x) n(\bar x)$; see \cite{DelfourZolesio2011}.
We can then define $E_H:L^2(\partial H)\to \Ue$ by
\begin{align*}
 E_Hu(x) = \tilde u(x) = u(\bar x),\qquad  x = \bar x + d_D( x) n(\bar x)\in {\rm supp}(\gamma_H |\nabla\omega^\eps|), \ \bar x \in\partial H,
\end{align*}
and similarly for the measurements, $E_B:L^2(\partial B)\to \Me$ given by
\begin{align*}
 E_B f(x) = \tilde f(x) = f(\bar x),\qquad  x = \bar x + d_D( x) n(\bar x)\in {\rm supp}(\gamma_B |\nabla\omega^\eps|), \ \bar x \in\partial B.
\end{align*}
If the context is clear, we will write in abuse of notation $\tilde u$ and $\tilde f$ instead of $E_H u$ and $E_B f$.
Some properties of the extensions $E_B$ and $E_H$ are compiled in the appendix.

\subsubsection{Diffuse forward operator}\label{sec:diffuse_forward}
We approximate \eqref{eq:sharpweakform} via 
\begin{align}\label{eq:diffuseweakform}
	\langle M \nabla v,\nabla w\rangle_{\omega^\eps} = \langle u, w \rangle_{\Ue} \quad\text{for all } w \in \H_\diamond^\eps,
\end{align}
where $u\in \Ue$. We have the following well-posedness result for \eqref{eq:diffuseweakform}; see \cite[Lemma~6.17]{BES2014}.

\begin{lemma}\label{lem:existence_diffuse}
 For each $u\in \Ue$ there exist a unique $v\in\H^\eps_\diamond$ verifying $\eqref{eq:diffuseweakform}$ and a constant $C>0$ independent of $\eps$ such that
 \begin{align*}
  \| v\|_{\H^\eps} \leq C \|u\|_{\Ue}.
 \end{align*}
\end{lemma}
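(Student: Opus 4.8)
The plan is to establish the solvability of \eqref{eq:diffuseweakform} by the Lax--Milgram lemma on the space $\H^\eps_\diamond$, mirroring the proof of Lemma~\ref{lem:existence}, but now tracking carefully the dependence of every constant on $\eps$. First I would note that $\H^\eps_\diamond$ is a Hilbert space: the functional $v\mapsto\langle v,1\rangle_{\Ue}$ is bounded on $\H^\eps$ by the diffuse trace Lemma~\ref{lem:diffusetrace}, so its kernel $\H^\eps_\diamond$ is a closed subspace of $\H^\eps$. I then introduce the bilinear form $a(v,w)=\langle M\nabla v,\nabla w\rangle_{\omega^\eps}$ and the linear functional $\ell(w)=\langle u,w\rangle_{\Ue}$, so that \eqref{eq:diffuseweakform} reads $a(v,w)=\ell(w)$ for all $w\in\H^\eps_\diamond$, and Lax--Milgram will supply a unique solution together with the bound $\|v\|_{\H^\eps}\le\|\ell\|/c$, where $c$ is the coercivity constant of $a$.

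Continuity of $a$ and $\ell$ is the easy part. From the upper ellipticity bound in \eqref{eq:ellipticity} together with the Cauchy--Schwarz inequality for the weighted inner product $\langle\cdot,\cdot\rangle_{\omega^\eps}$ one obtains $|a(v,w)|\le\tfrac1m\|\nabla v\|_{\omega^\eps}\|\nabla w\|_{\omega^\eps}\le\tfrac1m\|v\|_{\H^\eps}\|w\|_{\H^\eps}$, with a constant independent of $\eps$. For the right-hand side, the diffuse trace Lemma~\ref{lem:diffusetrace} provides the continuous embedding $\H^\eps\hookrightarrow\Ue$ with an $\eps$-uniform constant $C_{tr}$, so that $|\ell(w)|=|\langle u,w\rangle_{\Ue}|\le\|u\|_{\Ue}\|w\|_{\Ue}\le C_{tr}\|u\|_{\Ue}\|w\|_{\H^\eps}$; hence $\|\ell\|\le C_{tr}\|u\|_{\Ue}$.

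Coercivity is the crux and the expected main obstacle. The lower ellipticity bound immediately yields $a(v,v)\ge m\|\nabla v\|_{\omega^\eps}^2$, but the norm on $\H^\eps$ also carries the term $\|v\|_{\omega^\eps}^2$, which must be absorbed into the gradient term. This requires a Poincar\'e-type inequality $\|v\|_{\omega^\eps}\le C_P\|\nabla v\|_{\omega^\eps}$ on $\H^\eps_\diamond$, and --- crucially for the $\eps$-independence of the final constant --- with $C_P$ uniform in $\eps$. The normalization $\langle v,1\rangle_{\Ue}=0$ built into $\H^\eps_\diamond$ is precisely what rules out the constant functions on which such an inequality would otherwise fail. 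Establishing the \emph{uniform} weighted Poincar\'e inequality is delicate because both the weight $\omega^\eps$ and the boundary-type normalization $\langle\cdot,1\rangle_{\Ue}$ degenerate as $\eps\to0$; I would invoke the corresponding result of \cite{BES2014}, after which $a(v,v)\ge m\|\nabla v\|_{\omega^\eps}^2\ge\tfrac{m}{1+C_P^2}\|v\|_{\H^\eps}^2$ gives coercivity with an $\eps$-independent constant.

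With continuity and $\eps$-uniform coercivity in hand, the Lax--Milgram lemma yields a unique $v\in\H^\eps_\diamond$ solving \eqref{eq:diffuseweakform}, together with the quantitative bound $\|v\|_{\H^\eps}\le\|\ell\|/c\le\tfrac{(1+C_P^2)C_{tr}}{m}\|u\|_{\Ue}$, in which the constant is independent of $\eps$, as claimed.
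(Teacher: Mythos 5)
Your proposal is correct and is essentially the expected argument: the paper itself gives no inline proof but defers to \cite[Lemma~6.17]{BES2014}, and your Lax--Milgram reconstruction on $\H^\eps_\diamond$ mirrors exactly the paper's own proof of the sharp-interface analogue (Lemma~\ref{lem:existence}), with the $\eps$-uniformity of the constants correctly identified as the only new difficulty. One point deserves emphasis, and you handle it correctly: the coercivity step cannot be closed with the Poincar\'e--Friedrichs-type inequality recorded in the paper's appendix (Lemma~\ref{lemma:diffusefriedrichs}), since that estimate carries the boundary-type term $\|v\|^2_{L^2(\Omega;\gamma|\nabla\omega^\eps|)}$ on its right-hand side, and this term does not vanish for $v\in\H^\eps_\diamond$ --- the constraint $\langle v,1\rangle_{\Ue}=0$ kills only the mean, not the norm, so using Lemma~\ref{lemma:diffusefriedrichs} alone would be circular. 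What is genuinely needed is the mean-zero weighted Poincar\'e inequality $\|v\|_{L^2(\omega^\eps)}\leq C_P\|\nabla v\|_{L^2(\omega^\eps)}$ on $\H^\eps_\diamond$ with $C_P$ independent of $\eps$, whose proof requires a compactness-type argument uniform in the degenerating weight; you are right to pull precisely this ingredient from \cite{BES2014} rather than from the appendix, which is exactly what the paper's citation of \cite[Lemma~6.17]{BES2014} amounts to.
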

In order to use $u^\dagger$ in \eqref{eq:diffuseweakform}, we will use the extension $\tilde u^\dagger = E_H u^\dagger\in\Ue$. This will introduce errors quantified by the following 
\begin{lemma}\label{lem:perturbation}
 Let $v^\eps\in \H^\eps_\diamond$ be a solution to \eqref{eq:diffuseweakform} with data $u$ replaced by $E_H u^\dagger$. Then there exists $C>0$ such that
 \begin{align*}
  \|v^\dagger-v^\eps\|_{\H^\eps}\leq C \eps^{3/2} \|v^\dagger\|_{W^{3,\infty}(D)}.
 \end{align*}
\end{lemma}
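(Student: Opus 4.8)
# Proof Proposal

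The goal is to bound the difference between the true potential $v^\dagger$ (solving the sharp problem \eqref{eq:sharpweakform}) and its diffuse approximation $v^\eps$ (solving \eqref{eq:diffuseweakform} with $u$ replaced by $E_H u^\dagger$), measured in the $\H^\eps$-norm. The natural strategy is a \emph{Galerkin-orthogonality / consistency-error} argument: treat $v^\dagger$ as an approximate solution of the diffuse weak form, measure the residual it produces when tested against $w\in\H^\eps_\diamond$, and convert that residual bound into an error bound via the coercivity already established in Lemma~\ref{lem:existence_diffuse}.

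\textbf{Step 1: Set up the error equation.} Since \eqref{eq:diffuseweakform} is uniformly coercive on $\H^\eps_\diamond$ (Lemma~\ref{lem:existence_diffuse}, with constant independent of $\eps$), it suffices to control the linear functional
$$
 w\mapsto \langle M\nabla v^\dagger,\nabla w\rangle_{\omega^\eps}-\langle E_H u^\dagger,w\rangle_{\Ue}
 \quad\text{for }w\in\H^\eps_\diamond,
$$
because plugging in $w=v^\dagger-v^\eps$ (after suitable normalization so it lies in $\H^\eps_\diamond$) and using coercivity yields $\|v^\dagger-v^\eps\|_{\H^\eps}^2\lesssim \|v^\dagger-v^\eps\|_{\H^\eps}\cdot(\text{residual norm})$. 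So the whole problem reduces to estimating this consistency residual by $C\eps^{3/2}\|v^\dagger\|_{W^{3,\infty}(D)}$.

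\textbf{Step 2: Compare the diffuse integrals with the sharp ones.} Here I would use the diffuse-integral approximation results recalled in the appendix (from \cite{BES2014}), which quantify how $\langle\cdot,\cdot\rangle_{\omega^\eps}=\int_\Omega(\cdot)\,\tfrac{1+\varphi^\eps}{2}\d x$ approximates $\int_D(\cdot)\d x$ and how $\langle\cdot,\cdot\rangle_{\Ue}=\int_\Omega(\cdot)\gamma_H|\nabla\omega^\eps|\d x$ approximates the boundary integral $\int_{\partial H}(\cdot)\d\sigma$. The plan is to write
$$
 \langle M\nabla v^\dagger,\nabla w\rangle_{\omega^\eps}-\langle E_Hu^\dagger,w\rangle_{\Ue}
 =\Big(\langle M\nabla v^\dagger,\nabla w\rangle_{\omega^\eps}-\int_D M\nabla v^\dagger\cdot\nabla w\,\d x\Big)
 -\Big(\langle E_Hu^\dagger,w\rangle_{\Ue}-\int_{\partial H}u^\dagger w\,\d\sigma\Big),
$$
using that $v^\dagger$ satisfies the sharp identity \eqref{eq:sharpweakform} exactly so the two sharp integrals cancel. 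Each parenthesized term is then a pure domain- respectively boundary-integral approximation error. The domain error is controlled using the $W^{2,\infty}$-regularity of $M$ and the $W^{3,\infty}$-regularity of $v^\dagger$, which supply enough smoothness of $M\nabla v^\dagger$ for the higher-order ($\eps^{3/2}$, or better) convergence rate; the boundary error additionally uses that $u^\dagger=n\cdot M\nabla v^\dagger\in W^{2,\infty}(\partial D)$ and the approximation properties of the extension $E_H$ (compiled in the appendix).

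\textbf{The main obstacle.} The delicate point is obtaining the \emph{sharp exponent} $3/2$ rather than a weaker rate, and doing so uniformly in $\eps$. The two error contributions in Step~2 are individually only first order in a naive estimate; the gain to order $3/2$ comes from a cancellation between them — essentially the same cancellation that makes $v^\dagger$ a weak solution of the sharp problem, now exploited at the level of the approximation quadrature. Concretely I expect to need a careful expansion of both quadrature errors along the signed-distance coordinate $\bar x + d_D(x)n(\bar x)$ near $\partial H$, matching the leading-order boundary terms so that only the higher-order remainder survives; the structure ``Neumann data $=$ normal flux of $v^\dagger$'' is precisely what makes the leading terms agree. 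Keeping the constant independent of $\eps$ requires that all these expansions be controlled on the $\eps$-tube $\mathrm{supp}(|\nabla\omega^\eps|)$ using the smoothness assumptions $\partial D\in C^{3,1}$, $M\in W^{2,\infty}$, $v^\dagger\in W^{3,\infty}(D)$, which is exactly why those regularity hypotheses were imposed. Once the residual bound $C\eps^{3/2}\|v^\dagger\|_{W^{3,\infty}(D)}$ is in hand, the coercivity estimate from Step~1 closes the argument immediately.
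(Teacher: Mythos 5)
Your Step 1 (error equation plus the uniform stability of Lemma~\ref{lem:existence_diffuse}) is exactly how the paper begins, and you have correctly located both the difficulty (naive quadrature bounds are too weak) and the source of the cancellation ($u^\dagger=n\cdot M\nabla v^\dagger$ on $\partial H$, zero flux on $\partial B$). However, the step that actually produces the rate $\eps^{3/2}$ is missing, and the route you sketch for it would not go through as stated. The obstruction sits in your first parenthesis, $\langle M\nabla v^\dagger,\nabla w\rangle_{\omega^\eps}-\int_D M\nabla v^\dagger\cdot\nabla w\d x$: its integrand contains $\nabla w$, and a test function $w\in\H^\eps_\diamond$ is only of class $W^{1,2}(\omega^\eps)$, so $\nabla w$ has no trace on $\partial H$ and no expansion along the normal coordinate $\bar x+d_D(x)n(\bar x)$. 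Hence the proposed ``matched expansion of both quadrature errors'' cannot even be formulated for this term; the best direct bound is Cauchy--Schwarz on the $\eps$-tube, which gives $O(\eps^{1/2})\|v^\dagger\|_{W^{1,\infty}(D)}\|w\|_{\H^\eps}$ (note: half order, not the first order you claim), and no term-by-term matching against the boundary-quadrature error can repair this while the derivative still sits on $w$.

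The missing idea is to integrate by parts \emph{inside the weighted integral}, using the identity $\nabla\omega^\eps=-n|\nabla\omega^\eps|$, so that the derivative is moved off the test function entirely. This is precisely what the paper does: the residual becomes
\begin{align*}
 \langle \tilde u^\dagger, w\rangle_{\Ue}-\langle M\nabla v^\dagger, \nabla w\rangle_{\omega^\eps}
  =\langle \tilde u^\dagger-n\cdot M\nabla v^\dagger , w\rangle_{\Ue}
  - \langle n\cdot M\nabla v^\dagger,  w \rangle_{\Me}-\langle {\rm div}(M\nabla v^\dagger), w\rangle_{\omega^\eps},
\end{align*}
and now each integrand vanishes on the relevant sharp set: $E_H(n\cdot M\nabla v^\dagger)-n\cdot M\nabla v^\dagger$ vanishes on $\partial H$, $n\cdot M\nabla v^\dagger$ vanishes on $\partial B$, and ${\rm div}(M\nabla v^\dagger)$ vanishes in $D$. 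At that point the appendix estimates close the argument with no asymptotic matching at all --- Lemma~\ref{lem:error_diffuse_integral}~(iv) for the two interface terms (this is exactly the ``vanishing on $\Gamma$ buys an extra power of $\eps$'' tool) and Lemma~\ref{lem:error_diffuse_integral}~(i) for the divergence term --- each giving $C\eps^{3/2}\|\cdot\|\,\|w\|_{\H^\eps}$, after which stability yields the claim. So your framework and your intuition about where the gain comes from are right, but without this integration by parts (or an equivalent device removing $\nabla$ from the test function) the proposal is stuck at $O(\eps^{1/2})$ and cannot reach $\eps^{3/2}$.
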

\begin{proof}
 The difference $v^\eps-v^\dagger$ satisfies
 \begin{align*}
  \langle M\nabla (v^\eps-v^\dagger), \nabla w \rangle_{\omega^\eps} &= \langle \tilde u^\dagger, w\rangle_{\Ue}-\langle  M\nabla v^\dagger, \nabla w\rangle_{\omega^\eps}.
 \end{align*}
 Integration by parts and $-n|\nabla \omega^\eps| =\nabla\omega^\eps$ yields 
\begin{align*}
  \langle \tilde u^\dagger, w\rangle_{\Ue}-\langle M\nabla v^\dagger, \nabla w\rangle_{\omega^\eps}
  =\langle \tilde u^\dagger-n\cdot M\nabla v^\dagger , w\rangle_{\Ue}
  - \langle n\cdot M\nabla v^\dagger,  w \rangle_{\Me}-\langle {\rm div}(M\nabla v^\dagger), w\rangle_{\omega_\eps}.
\end{align*}
To treat the first term we use $n\cdot M\nabla v^\dagger = u^\dagger$ on $\partial H$ and Lemma~\ref{lem:error_diffuse_integral} (iv) to obtain
\begin{align*}
 \langle E_H (n\cdot M\nabla v^\dagger)-n\cdot M\nabla v^\dagger, w\rangle_{\Ue} \leq C \eps^{3/2} \| v^\dagger\|_{W^{3,2}(\Omega)} \|w\|_{\H^\eps}
\end{align*}
for some $C>0$.
Since $n\cdot M\nabla v^\dagger=0$ on $\partial B$, the second term can be treated similarly. 
To treat the third term we use ${\rm div}(M\nabla v^\dagger)=0$ in $D$ and Lemma~\ref{lem:error_diffuse_integral} (i) to obtain
\begin{align*}
 |\langle {\rm div}(M\nabla v^\dagger),w\rangle_{\omega^\eps}| \leq C\eps^{3/2} \|{\rm div}(M\nabla v^\dagger)\|_{W^{1,\infty}(\Omega)} \|w\|_{\H^\eps}.
\end{align*}
The a priori estimate of Lemma~\ref{lem:existence_diffuse} yields the assertion. 
\end{proof}


Since in applications we have in mind both $\partial B$ and $\partial H$ are unknown or difficult to approximate, we will employ diffuse approximations of $\partial B$ and $\partial H$. Hence, we are concerned with solving the following (diffuse) operator equation
\begin{align}\label{eq:diffuseIP}
 F^\eps u = \tilde f^\delta \quad \text{in } \Me,
\end{align}
where $F^\eps: \Ue\to\Me$ is a bounded linear operator mapping $u$ onto the diffuse trace of the solution $v$ of \eqref{eq:diffuseweakform}. The data $\tilde f^\delta=E_B f^\delta$ is obtained by extending the measured data $f^\delta$.
In view of the possible extensions of the interface data $u$ and $f$, there are of course many different possibilities to define a forward operator. Since these investigations will be similar to ours, we leave the modifications to the reader.
Notice that, for each $\eps>0$ fixed, the injection $\H^\eps \hookrightarrow \Me$ is compact, and hence \eqref{eq:diffuseIP} is ill-posed as well.

As $E_B$ is bounded, see Lemma~\ref{lem:extension}, measuring in the weaker diffuse interface norm will not alter the noise level significantly, i.e.,
\begin{align}\label{eq:noise_extended}
 \| E_B f^\dagger- E_B f^\delta\|_{\Me}\leq C(\eps) \delta,
\end{align}
with $C(\eps)\to 1$ as $\eps\to 0$.
Using the diffuse domain method as underlying governing equation will however have an impact, which might be interpreted as an operator perturbation, namely
\begin{align*}
 \| F^\eps E_H u^\dagger - E_B f^\delta\|_{\Me}\leq C (\delta + \eps^{3/2}).
\end{align*}
The latter estimate is a direct consequence of the triangle inequality and Lemma~\ref{lem:perturbation}.
%
The Tikhonov functional \eqref{eq:Tikhonov} is approximated by the following functional
\begin{align}\label{eq:Tikhonov_diffuse}
  J^\eps(u,v)= \frac{1}{2} \| v - \tilde f^\delta \|^2_{\Me} + \frac{\alpha}{2} \|u\|^2_{\Ue}\quad\text{subject to } \eqref{eq:diffuseweakform}.
\end{align}
Note that we not only have to deal with perturbed forward operators but also with perturbed data misfit and regularization functionals. As the diffuse boundary norms are weaker than their counterparts for the sharp interfaces, this choice of topologies makes our investigations non-standard and requires adapted arguments to be detailed in the next section.

\section{Analysis of the Diffuse Domain Regularization}\label{sec:analysis}
In the following we provide an analysis of the variational models with diffuse interfaces.
We begin with the existence of minimizers of \eqref{eq:Tikhonov_diffuse} by investigating the associated saddle-point problem. Then we show stability and convergence of minimizers of the diffuse Tikhonov functional. Under a standard source condition we then also obtain convergence rates.

\subsection{Saddle-Point Formulation}\label{sec:saddle}
In the following we consider variations of the Lagrangian corresponding to \eqref{eq:Tikhonov_diffuse}
\begin{align}\label{eq:Lagrangian_diffuse}
	L^\eps(u,v,p) = J^\eps(u,v) + \langle M \nabla v, \nabla p\rangle_{\omega^\eps} -\langle u, p\rangle_{\Ue}.
\end{align}
Therefore, let us define two bilinear forms, namely
$a^\eps: (\Ue \times \H^\eps_\diamond)\times (\Ue \times \H^\eps_\diamond) \rightarrow \R$ given by
\begin{align*}
    a^\eps(u,v;q,w) = \langle v, w\rangle_{\Me} + \alpha \langle u,q\rangle_{\Ue},
\end{align*}
and $b^\eps: (\Ue\times \H^\eps_\diamond) \times \H^\eps_\diamond \rightarrow \R$ given by
\begin{align*}
	b^\eps(u,v;p) = \langle M \nabla v, \nabla p\rangle_{\omega^\eps} -\langle u, p\rangle_{\Ue}.
\end{align*}
Saddle-points of $L^\eps$ are then characterized as solutions of 
\begin{equation} \label{saddlepoint}
\begin{array}{lclcll}
  a^\eps(u,v;q,w) &+ &b^\eps(q,w;p)&= & f^\eps(q,w) & \quad \text{for all }(q,w) \in  \Ue \times \H^\eps_\diamond,\\
  b^\eps(u,v;r) & &    &= & g^\eps(r) & \quad \text{for all } r \in \H^\eps_\diamond.
\end{array}
\end{equation}
Here, we use the linear functionals $g^\eps: \H^\eps_\diamond \rightarrow \RR$, $g^\eps(r)=0$, and $f^\eps :\Ue \times \H^\eps_\diamond \rightarrow \RR$, $f^\eps(q,w)=\langle \tilde f^\delta,w\rangle_{\Me}$.
%
For the analysis of the saddle-point problem, let us define
\begin{align*}
 \|(u,v)\|_\alpha^2 = \alpha (\|u\|_{\Ue}^2 + \|\nabla v\|_{L^2(\omega^\eps)}^2) + \|v\|_{\Me}^2,
\end{align*}
which is a norm equivalent to the natural norm on $\Ue\times \H^\eps_\diamond$ for fixed $\alpha>0$; cf. Lemma~\ref{lemma:diffusefriedrichs}.

Let us first collect some basic properties of the saddle-point problem and the associated bilinear forms:
\begin{lemma}[Continuity]\label{lem:continuity}
 Let $0<\alpha\leq \alpha_0$.
 Then there exists a constant $C_c$ independent of $\eps$ and $\alpha$ such that
 \begin{align*}
  |a^\eps(u,v;q,w)|  \leq C_c \|(u,v)\|_\alpha \|(q,w)\|_\alpha\quad\text{and}\quad 
  |b^\eps(u,v;p)| \leq  \frac{1}{\sqrt{\alpha}}C_c \|(u,v)\|_{\alpha} \|p\|_{\H^\eps}
 \end{align*}
 for all $(u,v),(q,w)\in \Ue\times\H^\eps_\diamond$ and $p\in\H^\eps_\diamond$.
\begin{proof}
 The estimates follow from Lemma \ref{lem:diffusetrace} and a standard Cauchy-Schwarz argument.
\end{proof}
\end{lemma}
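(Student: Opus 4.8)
The plan is to obtain both bounds from the Cauchy--Schwarz inequality together with two uniform ingredients: the ellipticity bound \eqref{eq:ellipticity}, which yields the pointwise operator estimate $|M(x)\xi|\le \frac1m|\xi|$, and the uniform diffuse trace embedding $\H^\eps\hookrightarrow\Ue$ of Lemma~\ref{lem:diffusetrace}. The whole point of measuring in the weighted norm $\|(u,v)\|_\alpha^2 = \alpha(\|u\|_{\Ue}^2 + \|\nabla v\|_{L^2(\omega^\eps)}^2) + \|v\|_{\Me}^2$ is that it records the $\alpha$-scaling explicitly, so that the resulting continuity constants can be taken independent of both $\alpha$ and $\eps$; the factor $\frac1{\sqrt\alpha}$ in the bound on $b^\eps$ will then appear automatically from this scaling.

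For $a^\eps$ I would first split $a^\eps(u,v;q,w)=\langle v,w\rangle_{\Me}+\alpha\langle u,q\rangle_{\Ue}$ and apply Cauchy--Schwarz to each inner product, writing the second as $(\sqrt\alpha\|u\|_{\Ue})(\sqrt\alpha\|q\|_{\Ue})$. Since $\|v\|_{\Me}^2+\alpha\|u\|_{\Ue}^2\le\|(u,v)\|_\alpha^2$, a discrete Cauchy--Schwarz step gives directly $|a^\eps(u,v;q,w)|\le\|(u,v)\|_\alpha\|(q,w)\|_\alpha$, so that $C_c=1$ already suffices for this term (any larger constant is of course admissible).

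For $b^\eps$ I would treat the two summands separately. In $\langle M\nabla v,\nabla p\rangle_{\omega^\eps}$ the ellipticity bound and Cauchy--Schwarz in the weighted $L^2(\omega^\eps)$ inner product give $\frac1m\|\nabla v\|_{L^2(\omega^\eps)}\|\nabla p\|_{L^2(\omega^\eps)}$; I then use $\|\nabla v\|_{L^2(\omega^\eps)}=\frac1{\sqrt\alpha}(\sqrt\alpha\|\nabla v\|_{L^2(\omega^\eps)})\le\frac1{\sqrt\alpha}\|(u,v)\|_\alpha$ together with $\|\nabla p\|_{L^2(\omega^\eps)}\le\|p\|_{\H^\eps}$. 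For the term $\langle u,p\rangle_{\Ue}$ I apply Cauchy--Schwarz in $\Ue$, bound $\|p\|_{\Ue}\le C_{tr}\|p\|_{\H^\eps}$ by Lemma~\ref{lem:diffusetrace} with a constant $C_{tr}$ independent of $\eps$, and use $\|u\|_{\Ue}\le\frac1{\sqrt\alpha}\|(u,v)\|_\alpha$. Adding the two contributions yields $|b^\eps(u,v;p)|\le\frac1{\sqrt\alpha}(\frac1m+C_{tr})\|(u,v)\|_\alpha\|p\|_{\H^\eps}$, and taking $C_c=\max(1,\frac1m+C_{tr})$ proves both assertions.

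The computations above are routine; the \emph{only} substantive point, and the one I would be most careful about, is the independence of the constants from $\eps$. This rests entirely on invoking the uniform-in-$\eps$ trace constant from Lemma~\ref{lem:diffusetrace} and on the fact that the ellipticity bounds on $M$ hold pointwise in $x$. If the trace embedding constant were allowed to depend on $\eps$, it could in principle degenerate as the diffuse boundary measure $|\nabla\omega^\eps|$ concentrates onto $\partial D$, which is precisely why the uniform diffuse trace estimate is the essential prerequisite here. The hypothesis $0<\alpha\le\alpha_0$ plays no role in the continuity bounds themselves and is kept only for consistency with the companion estimates.
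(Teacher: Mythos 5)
Your proof is correct and follows exactly the route the paper intends: its one-line proof invokes precisely the Cauchy--Schwarz argument and the uniform-in-$\eps$ diffuse trace embedding of Lemma~\ref{lem:diffusetrace} that you spell out, with the $\frac{1}{\sqrt{\alpha}}$ factor emerging from the scaling built into $\|(u,v)\|_\alpha$. Your closing remark correctly identifies the $\eps$-uniformity of the trace constant as the only substantive ingredient.
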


\begin{lemma}[Kernel ellipticity]

Let $0<\alpha\leq \alpha_0$.
Then there exists a constant $C_e$ independent of $\eps$ and $\alpha$ such that
\begin{equation}
	a^\eps(u,v;u,v) \geq C_e \|(u,v)\|^2_{\alpha}
\end{equation}
for all $(u,v)\in \Ue\times\H^\eps_\diamond$ such that $b^\eps(u,v;v) = 0$.
\end{lemma}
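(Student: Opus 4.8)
The plan is to exploit the fact that
$a^\eps(u,v;u,v) = \|v\|_{\Me}^2 + \alpha\|u\|_{\Ue}^2$
already contains two of the three contributions appearing in $\|(u,v)\|_\alpha^2$, so that the entire task reduces to controlling the remaining gradient term $\alpha\|\nabla v\|_{L^2(\omega^\eps)}^2$ by $a^\eps(u,v;u,v)$, with a constant uniform in both $\eps$ and $\alpha$.

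First I would insert the kernel constraint. Testing with $b^\eps(u,v;v)=0$ gives $\langle M\nabla v,\nabla v\rangle_{\omega^\eps} = \langle u,v\rangle_{\Ue}$, and the lower ellipticity bound in \eqref{eq:ellipticity} yields $m\|\nabla v\|_{L^2(\omega^\eps)}^2 \leq \langle u,v\rangle_{\Ue}$. Applying Cauchy--Schwarz on the right gives $m\|\nabla v\|_{L^2(\omega^\eps)}^2 \leq \|u\|_{\Ue}\,\|v\|_{\Ue}$.

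Next I would absorb $\|v\|_{\Ue}$ into the gradient. The continuity of the diffuse trace embedding $\H^\eps\hookrightarrow\Ue$ (Lemma~\ref{lem:diffusetrace}) gives $\|v\|_{\Ue}\leq C_{tr}\|v\|_{\H^\eps}$, and because $v\in\H^\eps_\diamond$ has vanishing $\Ue$-mean, the diffuse Friedrichs/Poincar\'e inequality underlying Lemma~\ref{lemma:diffusefriedrichs} bounds $\|v\|_{\H^\eps}\leq C_P\|\nabla v\|_{L^2(\omega^\eps)}$. Combining these, $\|v\|_{\Ue}\leq C_{tr}C_P\|\nabla v\|_{L^2(\omega^\eps)}$, so after dividing by $\|\nabla v\|_{L^2(\omega^\eps)}$ (the estimate being trivial when it vanishes) one obtains $\|\nabla v\|_{L^2(\omega^\eps)}\leq \frac{C_{tr}C_P}{m}\|u\|_{\Ue}$, whence $\alpha\|\nabla v\|_{L^2(\omega^\eps)}^2 \leq (C_{tr}C_P/m)^2\,\alpha\|u\|_{\Ue}^2 \leq (C_{tr}C_P/m)^2\,a^\eps(u,v;u,v)$.

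Adding $\|v\|_{\Me}^2 + \alpha\|u\|_{\Ue}^2 = a^\eps(u,v;u,v)$ then yields $\|(u,v)\|_\alpha^2 \leq \bigl(1+(C_{tr}C_P/m)^2\bigr)\,a^\eps(u,v;u,v)$, i.e. the claim with $C_e = \bigl(1+(C_{tr}C_P/m)^2\bigr)^{-1}$. The main obstacle is that every step must be uniform in $\eps$: the spaces $\Ue$ and $\H^\eps$ themselves vary with $\eps$, so the crucial point is precisely that the trace constant $C_{tr}$ in Lemma~\ref{lem:diffusetrace} and the Poincar\'e constant $C_P$ in Lemma~\ref{lemma:diffusefriedrichs} are $\eps$-independent; granting these, the resulting $C_e$ inherits independence of both $\eps$ and $\alpha$, as required.
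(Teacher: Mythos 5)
Your overall strategy is reasonable, but it hinges on an inequality that the paper does not provide, and your citation for it is incorrect. You claim that, because $v\in\H^\eps_\diamond$ has vanishing $\Ue$-mean, ``the diffuse Friedrichs/Poincar\'e inequality underlying Lemma~\ref{lemma:diffusefriedrichs}'' gives $\|v\|_{\H^\eps}\leq C_P\|\nabla v\|_{L^2(\omega^\eps)}$ with $C_P$ independent of $\eps$. Lemma~\ref{lemma:diffusefriedrichs} says something weaker: $\|v\|_{\H^\eps}^2\leq C(\|\nabla v\|^2_{L^2(\Omega;\omega^\eps)}+\|v\|^2_{L^2(\Omega;\gamma|\nabla\omega^\eps|)})$, i.e.\ it \emph{requires} the diffuse boundary term on the right-hand side and makes no use of any mean-zero constraint. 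The pure-gradient bound you need cannot follow from Lemma~\ref{lemma:diffusefriedrichs} together with the trace Lemma~\ref{lem:diffusetrace} alone (combining them just gives $\|v\|_{\Ue}^2\leq C(\|\nabla v\|^2_{L^2(\omega^\eps)}+\|v\|_{\Ue}^2)$, which cannot be absorbed; indeed without the constraint the inequality is false for constants). The statement you want is plausibly true uniformly in $\eps$ -- one can compare the diffuse mean $\langle v,1\rangle_{\Ue}/\langle 1,1\rangle_{\Ue}$ with the sharp mean on $\partial H$, invoke a sharp-interface Poincar\'e inequality for boundary-mean-zero functions, and control the mismatch via Theorem~\ref{thm:estimate}~(i) -- but that is a separate, nontrivial $\eps$-uniform estimate which you neither prove nor can point to in the paper. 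As it stands, this is a genuine gap at the crux of the argument: everything else in your proof (the kernel identity, Cauchy--Schwarz, the final bookkeeping yielding $C_e=(1+(C_{tr}C_P/m)^2)^{-1}$) is fine, but it all rests on this unproven Poincar\'e inequality.

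It is instructive to see how the paper sidesteps exactly this difficulty. Instead of trying to bound $\|v\|_{\Ue}$ by the gradient alone, the paper uses Lemma~\ref{lem:diffusetrace} and Lemma~\ref{lemma:diffusefriedrichs} with the weight $\gamma_B$, which yields $\|v\|_{\Ue}^2\leq c(\|\nabla v\|^2_{L^2(\omega^\eps)}+\|v\|^2_{\Me})$ -- the boundary term is kept, but it is the \emph{measurement} norm, which already appears in $a^\eps(u,v;u,v)$. The kernel constraint is then added with a free multiplier $\kappa$, Young's inequality is applied to $\kappa\|u\|_{\Ue}\|v\|_{\Ue}$, and the resulting $\frac{\kappa^2 c}{2\alpha}(\|\nabla v\|^2_{L^2(\omega^\eps)}+\|v\|^2_{\Me})$ is absorbed by choosing $\kappa=m\alpha/c$ small. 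If you want to rescue your route, you must either supply an $\eps$-uniform proof of the mean-zero diffuse Poincar\'e inequality along the lines sketched above, or switch to the paper's mechanism of absorbing the boundary term through the $\Me$-norm.
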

\begin{proof}
Using $b^\eps(u,v;v)=0$ we obtain for any $\kappa > 0$ 
\begin{align*}
a^\eps(u,v;u,v)  &=  a^\eps(u,v;u,v) + \kappa b^\eps(u,v;v)\\
&\geq  \|v\|_{\Me}^2 +\alpha \|u\|^2_{\Ue} + \kappa m\|\nabla v\|_{L^2(\omega^\eps)}^2 -\kappa\|u\|_{\Ue}\|v\|_{\Ue}\\
&\geq  \|v\|_{\Me}^2 + \frac{\alpha}{2} \|u\|^2_{\Ue} + \kappa m\|\nabla v\|_{L^2(\omega^\eps)}^2 -\frac{\kappa^2}{2\alpha}\|v\|_{\Ue}^2,
\end{align*}
where we have used \eqref{eq:ellipticity} and Young's inequality.
With Lemma~\ref{lem:diffusetrace} and Lemma~\ref{lemma:diffusefriedrichs} there exists a constant $c>0$ independent of $\eps$ such that
\begin{align*}
 \|v\|_{\Ue}^2 \leq c (\|\nabla v\|_{L^2(\omega^\eps)}^2 + \|v\|_{\Me}^2).
\end{align*}
Increasing $c$ if necessary, we may assume that $c\geq \alpha_0 m^2$.
Hence, we arrive at the estimate
\begin{align*}
 a^\eps(u,v;u,v) \geq \|v\|_{\Me}^2 + \frac{\alpha}{2} \|u\|^2_{\Ue} + \kappa m\|\nabla v\|_{L^2(\omega^\eps)}^2 -\frac{\kappa^2 c}{2\alpha} (\|\nabla v\|_{L^2(\omega^\eps)}^2 + \|v\|_{\Me}^2).
\end{align*}
Choosing $\kappa=m\alpha/c$ we have that
\begin{align*}
 a^\eps(u,v;u,v) \geq (1-\frac{m^2\alpha}{2c})\|v\|_{\Me}^2 + \frac{\alpha m^2}{2c} \big(\|u\|^2_{\Ue} + \|\nabla v\|_{L^2(\omega^\eps)}^2\big).
\end{align*}
By choice of $c$, $1-\frac{m^2\alpha}{2c}\geq \frac{1}{2}$, and the assertion holds with $C_e=\min\{1,m^2/c\}/2$.
\end{proof}
\begin{lemma}[Inf-sup stability]\label{lemma:infsup}
Let $0<\alpha\leq \alpha_0$.
Then there exists a constant $C_i$ independent of $\eps$ and $\alpha$ such that
\begin{equation}
	\sup_{(u,v) \in \Ue \times \H^\eps_\diamond} \frac{b^\eps(u,v;p)}{ \|(u,v)\|_\alpha} \geq C_i \| p \|_{\H^\eps}\quad \text{for all } p \in \H^\eps_\diamond.
\end{equation}
\end{lemma}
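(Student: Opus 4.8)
The plan is to verify the inf-sup condition constructively: for each fixed $p\in\H^\eps_\diamond$ I would exhibit an explicit test pair $(u,v)\in\Ue\times\H^\eps_\diamond$ whose image under $b^\eps(\cdot,\cdot;p)$ is bounded below by $\|p\|_{\H^\eps}^2$, and whose $\|\cdot\|_\alpha$-norm is bounded above by $\|p\|_{\H^\eps}$, both with constants independent of $\eps$ and $\alpha$. Guided by the two terms $\langle M\nabla v,\nabla p\rangle_{\omega^\eps}$ and $-\langle u,p\rangle_{\Ue}$ that make up $b^\eps$, the natural choice is $v=p$ and $u=-p$, where in the latter slot $p$ is read as its image under the continuous embedding $\H^\eps\hookrightarrow\Ue$ of Lemma~\ref{lem:diffusetrace}; both entries are admissible because $p\in\H^\eps_\diamond$. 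With this choice,
\begin{align*}
  b^\eps(u,v;p)=\langle M\nabla p,\nabla p\rangle_{\omega^\eps}+\|p\|_{\Ue}^2\geq m\|\nabla p\|_{L^2(\omega^\eps)}^2+\|p\|_{\Ue}^2,
\end{align*}
where I have used the ellipticity condition \eqref{eq:ellipticity}.

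The next step is to upgrade this gradient-dominated lower bound to the full $\H^\eps$-norm. Here the diffuse Friedrichs inequality (Lemma~\ref{lemma:diffusefriedrichs}) is the essential tool: since $p\in\H^\eps_\diamond$ has vanishing $\Ue$-mean, it provides a constant $c_P>0$, independent of $\eps$, with $\|p\|_{L^2(\omega^\eps)}^2\leq c_P\|\nabla p\|_{L^2(\omega^\eps)}^2$ (should the available form of the inequality retain a boundary contribution, that term is controlled by the nonnegative quantity $\|p\|_{\Ue}^2$ already present above). Consequently $\|p\|_{\H^\eps}^2\leq(1+c_P)\|\nabla p\|_{L^2(\omega^\eps)}^2$, and the previous display yields
\begin{align*}
  b^\eps(u,v;p)\geq m\|\nabla p\|_{L^2(\omega^\eps)}^2\geq\frac{m}{1+c_P}\|p\|_{\H^\eps}^2.
\end{align*}

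It remains to bound the denominator. From the definition of $\|\cdot\|_\alpha$, the continuity of the embeddings $\H^\eps\hookrightarrow\Ue$ and $\H^\eps\hookrightarrow\Me$ from Lemma~\ref{lem:diffusetrace}, and the trivial bound $\|\nabla p\|_{L^2(\omega^\eps)}\leq\|p\|_{\H^\eps}$, I obtain for $\alpha\leq\alpha_0$ a constant $c_T$, independent of $\eps$ and $\alpha$, such that
\begin{align*}
  \|(u,v)\|_\alpha^2=\alpha\big(\|p\|_{\Ue}^2+\|\nabla p\|_{L^2(\omega^\eps)}^2\big)+\|p\|_{\Me}^2\leq c_T\|p\|_{\H^\eps}^2.
\end{align*}
Dividing the numerator estimate by $\|(u,v)\|_\alpha$ and passing to the supremum over admissible pairs then gives the claim with the explicit constant $C_i=m/\big((1+c_P)\sqrt{c_T}\big)$.

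The step I expect to be most delicate is guaranteeing that $c_P$ and $c_T$ are genuinely uniform in $\eps$, since everything hinges on the $\eps$-independence of the diffuse trace and diffuse Friedrichs constants; if either degenerated as the interface width shrank, so would the inf-sup constant. Uniformity in $\alpha$, by contrast, comes essentially for free from this particular test pair: as $\alpha\to0$ the denominator is dominated by the $\alpha$-free term $\|p\|_{\Me}^2\lesssim\|p\|_{\H^\eps}^2$, so no constant is lost in the small-regularization regime on which the subsequent convergence-rate analysis relies.
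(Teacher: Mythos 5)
Your proposal is correct and follows essentially the same route as the paper's proof: the same test pair $v=p$, $u=-p$ (admissible via the diffuse trace embedding of Lemma~\ref{lem:diffusetrace}), ellipticity \eqref{eq:ellipticity} for the lower bound on $b^\eps$, the Poincar\'e--Friedrichs-type inequality of Lemma~\ref{lemma:diffusefriedrichs} with $\gamma=\gamma_H$ to recover the full $\H^\eps$-norm, and the trace embeddings to bound $\|(u,v)\|_\alpha$ by $C\|p\|_{\H^\eps}$ uniformly in $\eps$ and $\alpha\leq\alpha_0$. Note only that the paper's Lemma~\ref{lemma:diffusefriedrichs} is stated with the boundary term $\|p\|_{L^2(\gamma|\nabla\omega^\eps|)}^2$ rather than as a mean-zero Poincar\'e inequality, so it is precisely your parenthetical fallback (absorbing that term into the $\|p\|_{\Ue}^2$ already present in $b^\eps(u,v;p)$) that constitutes the actual argument.
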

\begin{proof}
Let $p\in\H^\eps_\diamond$ be given. By Lemma~\ref{lem:diffusetrace} the embedding $\H^\eps_\diamond\hookrightarrow \Ue$ is continuous, and thus we can choose $v=p$ and $u=-p$.
Using Lemma~\ref{lemma:diffusefriedrichs} we further obtain another constant $C>0$, which possibly depends on $\alpha_0$ but not on $\eps$ or $\alpha$, such that $\|(u,v)\|_\alpha\leq C \|p\|_{\H^\eps}$.
The assertion then follows from
\begin{align*}
 b^\eps(u,v;p) \geq m \|\nabla p\|_{L^2(\omega^\eps)}^2 +\|p\|_{\Ue}^2
 \geq c \|p\|_{\H^\eps}^2,
\end{align*}
where we also applied \eqref{eq:ellipticity} and Lemma~\ref{lemma:diffusefriedrichs} with $\gamma=\gamma_H$.
\end{proof}
As a consequence of Brezzi's splitting theorem \cite{Brezzi74}, we obtain the following result. Note that the a priori estimates derived in \cite{Brezzi74} do not use the continuity constant of $b^\eps$.
\begin{thm}[Existence of saddle-points]\label{uniqsoln}
Let $0<\alpha\leq \alpha_0$. Then for each $f^\eps \in (\Ue \times \H^\eps_\diamond)'$ and $g^\eps \in (\H^\eps_\diamond)'$ there exist a unique solution
$ (u^\eps,v^\eps)\in \Ue \times \H^\eps_\diamond$ and $p^\eps\in\H^\eps_\diamond$ of \eqref{saddlepoint} and there exists a constant $C_E$ independent of $\eps$ and $\alpha$ such that 
\begin{align*}
  \alpha (\|u^\eps\|_{\Ue}^2 + \| \nabla v^\eps\|_{L^2(\omega^\eps)}^2) + \|v^\eps\|_{\Me}^2 + \|p^\eps\|_{\H^\eps}^2
     \leq C_E( \|f^\eps\|^2_{(\Ue \times \H^\eps_\diamond)'} + \|g^\eps\|_{(\H^\eps_\diamond)'}^2).
\end{align*}
\end{thm}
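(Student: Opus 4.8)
The plan is to recognize that the final statement, Theorem~\ref{uniqsoln}, is exactly the conclusion of Brezzi's theory for saddle-point problems of the mixed form \eqref{saddlepoint}, and that the three preceding lemmas have already verified its hypotheses \emph{uniformly in $\eps$ and $\alpha$}. Concretely, I would first observe that the pair $(a^\eps,b^\eps)$ together with the $\alpha$-weighted norm $\|(u,v)\|_\alpha$ on $\Ue\times\H^\eps_\diamond$ and the norm $\|\cdot\|_{\H^\eps}$ on the multiplier space $\H^\eps_\diamond$ satisfies:
\begin{enumerate}[(i)]
 \item continuity of $a^\eps$ with constant $C_c$ and continuity of $b^\eps$ (Lemma~\ref{lem:continuity});
 \item coercivity of $a^\eps$ on the kernel $\{(u,v): b^\eps(u,v;v)=0\}$ with constant $C_e$ (Kernel ellipticity);
 \item the inf-sup (LBB) condition for $b^\eps$ with constant $C_i$ (Lemma~\ref{lemma:infsup}).
\end{enumerate}
Since $\Ue\times\H^\eps_\diamond$ and $\H^\eps_\diamond$ are Hilbert spaces (complete with respect to the stated norms, by the definition of the weighted spaces and Lemma~\ref{lemma:diffusefriedrichs}, which makes $\|(\cdot,\cdot)\|_\alpha$ equivalent to the natural product norm), Brezzi's splitting theorem \cite{Brezzi74} applies verbatim and yields existence and uniqueness of $(u^\eps,v^\eps,p^\eps)$ for every right-hand side $(f^\eps,g^\eps)$ in the dual spaces.

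The second, and genuinely substantive, part is the \emph{$\eps$- and $\alpha$-uniform} a priori bound. The key point I would stress is the remark already flagged in the excerpt: the a priori constants in Brezzi's theorem depend only on $C_c$ (the continuity constant of $a^\eps$), $C_e$, and $C_i$, but crucially \emph{not} on the continuity constant of $b^\eps$. This is essential here, because the bound on $b^\eps$ in Lemma~\ref{lem:continuity} degenerates like $1/\sqrt{\alpha}$ as $\alpha\to 0$; if the final constant depended on it, one could not obtain a bound independent of $\alpha$. Since $C_c$, $C_e$, $C_i$ were each shown to be independent of $\eps$ and $\alpha$ in the three lemmas, the resulting stability constant $C_E=C_E(C_c,C_e,C_i)$ inherits this independence. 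I would therefore quote the explicit a priori estimate from \cite{Brezzi74}, namely a bound of the form $\|(u^\eps,v^\eps)\|_\alpha + \|p^\eps\|_{\H^\eps}\leq C_E(\|f^\eps\|_{(\Ue\times\H^\eps_\diamond)'}+\|g^\eps\|_{(\H^\eps_\diamond)'})$, and then square it and absorb cross terms via Young's inequality to reach the stated squared form.

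Finally I would translate the abstract bound into the displayed inequality by unfolding the definition of $\|(u^\eps,v^\eps)\|_\alpha^2 = \alpha(\|u^\eps\|_{\Ue}^2+\|\nabla v^\eps\|_{L^2(\omega^\eps)}^2)+\|v^\eps\|_{\Me}^2$, which matches the left-hand side exactly after adding $\|p^\eps\|_{\H^\eps}^2$. The main obstacle, and the thing deserving the most care, is precisely the independence of the constant from $\alpha$: I would double-check that Brezzi's estimate is being invoked in the version whose constants are built only from the kernel-coercivity and inf-sup constants and the \emph{a}-continuity constant, so that the $1/\sqrt\alpha$ blow-up in the continuity of $b^\eps$ never enters. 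A secondary technical point is completeness of the quotient-type spaces (recall functions are identified on the relevant supports), but this is guaranteed by the construction of $\H^\eps$, $\Ue$, $\Me$ as genuine Hilbert spaces, so no additional work is needed there.
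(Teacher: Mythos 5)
Your proposal is correct and takes essentially the same route as the paper: the paper obtains Theorem~\ref{uniqsoln} directly from Brezzi's splitting theorem \cite{Brezzi74}, with its hypotheses supplied by Lemma~\ref{lem:continuity}, the kernel ellipticity lemma, and Lemma~\ref{lemma:infsup}, and it makes exactly your central observation that the a priori estimates in \cite{Brezzi74} involve only the continuity constant of $a^\eps$, the kernel coercivity constant, and the inf-sup constant --- not the continuity constant of $b^\eps$, whose $1/\sqrt{\alpha}$ blow-up would otherwise destroy the $\alpha$-uniformity of $C_E$. The only cosmetic difference is that you label the set $\{(u,v):\, b^\eps(u,v;v)=0\}$ as ``the kernel''; it is in fact a superset of the true kernel $\{(u,v):\, b^\eps(u,v;r)=0 \text{ for all } r\in\H^\eps_\diamond\}$, so the coercivity proved in the paper is stronger than what Brezzi's theorem requires and your argument goes through unchanged.
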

As usual $(\Ue \times \H^\eps_\diamond)'$ and $(\H^\eps_\diamond)'$ denote the respective dual spaces of $\Ue \times \H^\eps_\diamond$ and $\H^\eps_\diamond$, which we endow with the norms
\begin{align*}
 \|f^\eps\|_{(\Ue \times \H^\eps_\diamond)'}=\sup_{(u,v)\in \Ue\times \H^\eps_\diamond\setminus\{0\}} \frac{f^\eps(u,v)}{\|(u,v)\|_\alpha},\qquad \|g^\eps\|_{(\H^\eps_\diamond)'} =\sup_{p\in \H^\eps_\diamond\setminus\{0\}} \frac{g^\eps(p)}{\|p\|_{\H^\eps}}.
\end{align*}
\subsection{Convergence and Regularization properties}
In this section we will investigate the regularization properties of the diffuse domain method  when used in combination with Tikhonov regularization in more detail.

\begin{thm}[Stability]
 Let $f_1,f_2\in\Me$. Then, for $C_E$ from Theorem~\ref{uniqsoln}, we have that
 \begin{align*}
  \| (u_1^\eps-u_2^\eps, v^\eps_1-v_2^\eps)\|_\alpha \leq \sqrt{C_E} \|f_1-f_2\|_{\Me},
 \end{align*}
 where $(u_i^\eps,v_i^\eps)$, $i=1,2$, denotes the solution to \eqref{saddlepoint} with right-hand side $g^\eps=0$ and $f^\eps(q,w)=\langle f_i, w \rangle_{\Me}$.
\end{thm}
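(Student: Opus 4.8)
The plan is to exploit the linearity of the saddle-point problem \eqref{saddlepoint} together with the a priori bound from Theorem~\ref{uniqsoln}, so that no new estimates are really needed. First I would observe that the solution depends linearly on the right-hand side: since $(u_i^\eps,v_i^\eps,p_i^\eps)$ solves \eqref{saddlepoint} with $g^\eps=0$ and $f^\eps(q,w)=\langle f_i,w\rangle_{\Me}$ for $i=1,2$, subtracting the two systems and using bilinearity of $a^\eps$ and $b^\eps$ shows that the difference $(u_1^\eps-u_2^\eps,\,v_1^\eps-v_2^\eps,\,p_1^\eps-p_2^\eps)$ is itself the unique solution of \eqref{saddlepoint} with $g^\eps=0$ and right-hand side
\[
 f^\eps(q,w)=\langle f_1-f_2,w\rangle_{\Me}.
\]

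Next I would apply the a priori estimate of Theorem~\ref{uniqsoln} to this difference solution. Because the left-hand side of that estimate contains the nonnegative term $\|p_1^\eps-p_2^\eps\|_{\H^\eps}^2$, I may discard it and keep only $\|(u_1^\eps-u_2^\eps,v_1^\eps-v_2^\eps)\|_\alpha^2$; with $g^\eps=0$ this yields
\[
 \|(u_1^\eps-u_2^\eps,v_1^\eps-v_2^\eps)\|_\alpha^2 \leq C_E \,\|f^\eps\|_{(\Ue\times\H^\eps_\diamond)'}^2 .
\]
It then remains to bound the dual norm of $f^\eps$. By Cauchy--Schwarz in $\Me$ and the elementary inequality $\|w\|_{\Me}\leq \|(q,w)\|_\alpha$, which is immediate from the definition of $\|\cdot\|_\alpha$ since the two $\alpha$-weighted terms are nonnegative, I obtain $|f^\eps(q,w)|\leq \|f_1-f_2\|_{\Me}\,\|(q,w)\|_\alpha$ for every $(q,w)$, and hence $\|f^\eps\|_{(\Ue\times\H^\eps_\diamond)'}\leq \|f_1-f_2\|_{\Me}$. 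Substituting and taking square roots gives the claimed bound.

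There is essentially no hard step here; the statement is a routine consequence of the well-posedness Theorem~\ref{uniqsoln}, and in that sense the real work has already been done in establishing continuity, kernel ellipticity, and inf-sup stability. The only point deserving a moment's care is that the final constant is exactly $\sqrt{C_E}$, with no hidden dependence on $\alpha$ or $\eps$: this relies on $C_E$ being $\alpha$- and $\eps$-independent (as asserted in Theorem~\ref{uniqsoln}) and on the dual-norm estimate producing the clean constant $1$, which works precisely because $\|\cdot\|_\alpha$ dominates $\|\cdot\|_{\Me}$ on the $v$-component without any weighting. I would simply double-check that no inf-sup or continuity constant re-enters through the back door; since the bound is read off directly from the a priori estimate rather than re-derived, it does not.
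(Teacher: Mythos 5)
Your proof is correct and is essentially identical to the paper's own argument: identify the difference $(u_1^\eps-u_2^\eps,v_1^\eps-v_2^\eps,p_1^\eps-p_2^\eps)$ as the solution of \eqref{saddlepoint} with right-hand side $f^\eps(q,w)=\langle f_1-f_2,w\rangle_{\Me}$, bound $\|f^\eps\|_{(\Ue\times\H^\eps_\diamond)'}\leq\|f_1-f_2\|_{\Me}$ via Cauchy--Schwarz and $\|w\|_{\Me}\leq\|(q,w)\|_\alpha$, and invoke Theorem~\ref{uniqsoln}. You merely spell out the steps the paper leaves implicit, and your closing remark that the constant $\sqrt{C_E}$ carries no hidden $\alpha$- or $\eps$-dependence is exactly the point of the result.
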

\begin{proof}
 $(u_1^\eps-u_2^\eps, v^\eps_1-v_2^\eps)$ is a solution to \eqref{saddlepoint} with right-hand side $g^\eps=0$ and $f^\eps(q,w)=\langle f_1-f_2, w \rangle_{\Me}$. Since $\|f^\eps\|_{(\Ue\times\H^\eps_\diamond)'} \leq \|f_1-f_2\|_{\Me}$ the result follows directly from Theorem~\ref{uniqsoln}.
\end{proof}

In order show convergence of the minimizers of the diffuse Tikhonov functional as $\alpha\to 0$, we need the following technical statement, which gives some sort of compactness.
\begin{prop}\label{prop:convergence1}
 Let $\{(u^\eps,v^\eps)\}\subset \Ue\times\H^\eps_\diamond$ be a sequence such that $b^\eps(u^\eps,v^\eps;r)=0$ for all $r\in \H^\eps_\diamond$ and such that there exists a constant $C>0$ with $\|u^\eps\|_{\Ue}\leq C$.
 Then there exists a subsequence $\{v^{\eps_k}\}$ of $\{v^\eps\}$ and $v\in H^1(\Omega)$ such that
 \begin{align*}
  \lim_{k\to \infty} \|\sqrt{\omega^{\eps_k}} \nabla v^{\eps_k}- \chi_D \nabla v\|_{L^2(\Omega)}=0 \quad\text{and}\quad \lim_{k\to \infty} \|v^{\eps_k}-v\|_{H^1(D)}=0.
 \end{align*}
\end{prop}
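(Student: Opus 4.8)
The plan is to extract weak limits from uniform a priori bounds, identify the limit, and then upgrade weak to strong convergence via convergence of energies. First I would record the bounds: since $b^\eps(u^\eps,v^\eps;r)=0$ for all $r\in\H^\eps_\diamond$ says exactly that $v^\eps$ solves \eqref{eq:diffuseweakform} with datum $u^\eps$, Lemma~\ref{lem:existence_diffuse} gives $\|v^\eps\|_{\H^\eps}\leq C\|u^\eps\|_{\Ue}\leq C'$ uniformly in $\eps$. Hence $\sqrt{\omega^\eps}\nabla v^\eps$ is bounded in $L^2(\Omega)^n$, and since $\sqrt 2\|v^\eps\|_{\H^\eps}\geq\|v^\eps\|_{H^1(D)}$ the sequence is bounded in $H^1(D)$. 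Passing to a subsequence (not relabeled) I would extract $v^\eps\weak v$ in $H^1(D)$, with $v^\eps\to v$ in $L^2(D)$ by Rellich, and $\sqrt{\omega^\eps}\nabla v^\eps\weak G$ in $L^2(\Omega)^n$; let $\phi=E_{D,\Omega}v\in H^1(\Omega)$ be the extension of the limit.

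The next step is to identify $G=\chi_D\nabla v$. On $D$ one has $\omega^\eps\to1$ a.e. and, for the sigmoid $S$ fixed in the paper, $\tfrac12\leq\omega^\eps\leq1$, so $\sqrt{\omega^\eps}\psi\to\psi$ strongly in $L^2(D)$ for every $\psi\in L^2(D)$; a weak–strong pairing against $\nabla v^\eps\weak\nabla v$ yields $G|_D=\nabla v$. Off $\overline D$ the weight vanishes: for $\psi\in C_c^\infty(\Omega\setminus\overline D)^n$ and $\eps$ small, $\mathrm{supp}\,\psi\cap\mathrm{supp}\,\omega^\eps=\emptyset$, so $\int_\Omega\sqrt{\omega^\eps}\nabla v^\eps\cdot\psi\,\d x=0$ and in the limit $\int_\Omega G\cdot\psi\,\d x=0$. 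As $|\partial D|=0$ this gives $G=\chi_D\nabla v$.

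The main step, which I expect to be the hard part, is to upgrade this to strong convergence; the obstruction is the possible concentration of gradient energy in the diffuse layer, and the crux is controlling the pairing $\langle u^\eps,v^\eps\rangle_{\Ue}$ in the $\eps$-dependent topology. Testing \eqref{eq:diffuseweakform} with $v^\eps$ gives $\langle M\nabla v^\eps,\nabla v^\eps\rangle_{\omega^\eps}=\langle u^\eps,v^\eps\rangle_{\Ue}$. To evaluate the right-hand side I would instead test with $w=v^\eps-\phi^\eps$, where $\phi^\eps=\phi-c^\eps\in\H^\eps_\diamond$ with $c^\eps=\langle\phi,1\rangle_{\Ue}/\|1\|_{\Ue}^2$ restoring the zero-mean constraint (so $\nabla\phi^\eps=\nabla\phi$), to obtain
\begin{align*}
 \langle M\nabla v^\eps,\nabla v^\eps\rangle_{\omega^\eps}=\langle M\nabla v^\eps,\nabla\phi\rangle_{\omega^\eps}+\langle u^\eps,v^\eps-\phi^\eps\rangle_{\Ue}.
\end{align*}
The first term converges to $\int_D M\nabla v\cdot\nabla v$ by the weak–strong pairing $\sqrt{\omega^\eps}\nabla v^\eps\weak\chi_D\nabla v$ against $\sqrt{\omega^\eps}M^T\nabla\phi\to\chi_D M^T\nabla\phi$. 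For the second, $\|u^\eps\|_{\Ue}$ is bounded, so it suffices that $\|v^\eps-\phi^\eps\|_{\Ue}\to0$. This \emph{strong convergence of the diffuse trace} is the genuine difficulty: I would derive it from $\|v^\eps-\phi\|_{L^2(\omega^\eps)}\to0$ (a consequence of $v^\eps\to v=\phi|_D$ in $L^2(D)$ and the thinness of $\mathrm{supp}\,\omega^\eps$ off $D$) together with a multiplicative diffuse trace inequality $\|g\|_{\Ue}^2\leq C\|g\|_{L^2(\omega^\eps)}\|g\|_{\H^\eps}$ from Lemma~\ref{lem:diffusetrace}, applied to $g=v^\eps-\phi$; the shift $c^\eps\to0$ follows since $\langle v^\eps,1\rangle_{\Ue}=0$ forces $\langle\phi,1\rangle_{\Ue}=\langle\phi-v^\eps,1\rangle_{\Ue}\to0$, while $\|1\|_{\Ue}$ is bounded below by the diffuse boundary integral convergence.

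Finally I would conclude. Having shown $\langle M\nabla v^\eps,\nabla v^\eps\rangle_{\omega^\eps}\to\int_D M\nabla v\cdot\nabla v$, and since this weighted quadratic form is equivalent to the $L^2(\Omega)$ norm by \eqref{eq:ellipticity}, convergence of these energies together with $\sqrt{\omega^\eps}\nabla v^\eps\weak\chi_D\nabla v$ yields strong convergence $\sqrt{\omega^\eps}\nabla v^\eps\to\chi_D\nabla v$ in $L^2(\Omega)$. For the $H^1(D)$ statement I would use $\omega^\eps\geq\tfrac12$ on $D$: restricting the strong convergence to $D$ gives $\sqrt{\omega^\eps}\nabla v^\eps\to\nabla v$ in $L^2(D)$, and since $1/\sqrt{\omega^\eps}\leq\sqrt2$ is bounded and tends to $1$ a.e. on $D$, a dominated-convergence argument upgrades this to $\nabla v^\eps\to\nabla v$ in $L^2(D)$; combined with $v^\eps\to v$ in $L^2(D)$ this gives the claimed $H^1(D)$ convergence.
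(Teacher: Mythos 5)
Your proposal follows the same architecture as the paper's proof: the same a priori bound via Lemma~\ref{lem:existence_diffuse}, the same identification of the weak limit $\chi_D\nabla v$, the same test function ($v^\eps$ minus the mean-corrected extension of $v$, which is exactly the paper's $r$), energy convergence plus weak convergence to upgrade to strong convergence, and the same $\omega^\eps\geq\tfrac12$-on-$D$ device for the $H^1(D)$ statement. The one point where you deviate is precisely the crux you flagged, $\|v^\eps-\phi^\eps\|_{\Ue}\to0$, and there your justification has a genuine gap. Two issues: (i) the ``multiplicative diffuse trace inequality'' $\|g\|_{\Ue}^2\leq C\|g\|_{L^2(\omega^\eps)}\|g\|_{\H^\eps}$ does \emph{not} follow from Lemma~\ref{lem:diffusetrace}, which is the additive bound $\|g\|_{\Ue}\leq C\|g\|_{\H^\eps}$; it is not proved anywhere in the paper, and its uniform-in-$\eps$ validity with the degenerate weight $\omega^\eps$ on the right-hand side is far from obvious (the natural co-area proof, applying the multiplicative trace inequality on the level sets $\{d_D<t\}$, loses uniform control as $t\uparrow\eps$ because $\omega^\eps\to0$ at the outer edge of the interfacial layer). (ii) Your premise $\|v^\eps-\phi\|_{L^2(\omega^\eps)}\to0$ does not follow from ``thinness of ${\rm supp}\,\omega^\eps$ off $D$'' alone: $v^\eps$ changes with $\eps$, and the uniform bound $\|v^\eps\|_{\H^\eps}\leq C$ only makes the layer contribution $\int_{\{0\leq d_D\leq\eps\}}|v^\eps|^2\omega^\eps\d x$ bounded, not small; excluding concentration of mass in the layer requires a quantitative estimate along normal rays, i.e.\ exactly the kind of argument behind Theorem~\ref{thm:estimate}.

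Both defects are repairable with tools already in the paper, and the repair is the paper's own argument: by compactness of the trace map $H^1(D)\hookrightarrow L^2(\partial H)$, the weak convergence $v^\eps\weak v$ in $H^1(D)$ gives $v^\eps\to v$ in $L^2(\partial H)$ along a further subsequence, and Theorem~\ref{thm:estimate}(i) applied to $g=v^\eps-v$ yields $\|g\|_{\Ue}^2\leq C\bigl(\|g\|_{L^2(\partial H)}^2+\eps\|g\|_{\H^\eps}^2\bigr)\to0$, since $\|g\|_{\H^\eps}$ is bounded. (If you insist on a multiplicative form, what is provable is $\|g\|_{\Ue}^2\leq C\bigl(\|g\|_{L^2(D)}\|g\|_{H^1(D)}+\eps\|g\|_{\H^\eps}^2\bigr)$, combining Theorem~\ref{thm:estimate}(i) with the standard multiplicative trace inequality on the fixed domain $D$; the additive $\eps$-term cannot be removed by appealing to Lemma~\ref{lem:diffusetrace}, but it is harmless here.) With this substitution the remainder of your proof goes through; in fact your direct weak--strong passage to the limit in $\langle M\nabla v^\eps,\nabla\phi\rangle_{\omega^\eps}$ and your dominated-convergence upgrade to $\nabla v^\eps\to\nabla v$ in $L^2(D)$ are marginally cleaner than the paper's $\limsup$/$\liminf$ and a.e.-subsequence arguments.
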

Here,  $\chi_D$ denotes the indicator function of $D$.

\begin{proof}
 Using Lemma~\ref{lem:existence_diffuse}, we obtain $\|v^{\eps}\|_{H^1(D)}\leq 2 \|v^\eps\|_{\H^\eps}\leq C\|u^\eps\|_{\Ue}\leq C$. Thus, we can extract a subsequence $\{v^{\eps}\}$, relabeled if necessary, such that $v^{\eps} \rightharpoonup v$ in $H^1(D)$ as $\eps\to 0$ for some  $v\in H^1(D)$.
 Now let $\varphi\in L^2(\Omega)^n$ be arbitrary. Since $0\leq \omega^\eps \leq 1$, we obtain $|\varphi \sqrt{\omega^\eps}| \leq |\varphi|\in L^2(\Omega)$. Moreover, since $\sqrt{\omega^\eps}\to \chi_D$ a.e. in $\Omega$ as $\eps\to 0$, we have $\varphi \sqrt{\omega^\eps} \to \varphi \chi_D$ a.e. in $\Omega$ as $\eps \to 0$. 
 Hence, using dominated convergence, $\varphi \sqrt{\omega^\eps}  \to \varphi \chi_D$ in $L^2(\Omega)^n$, and
\begin{align*}
 \int_D \sqrt{\omega^\eps} L\nabla v^\eps \cdot \varphi  \d x \to \int_D L\nabla v\cdot \varphi \d x\quad\text{as }\eps\to 0,
\end{align*}
using the Cholesky factorization $M=L^\top L$.
Since $\|\sqrt{\omega^\eps} L\nabla v^\eps\|_{L^2(\Omega)}\leq C \|v^\eps\|_{\H^\eps}$ is bounded (uniformly in $\eps$), and $|(\Omega\setminus D) \cap {\rm supp}(\omega^\eps)|\to 0$ as $\eps\to 0$, absolute continuity of the integral implies 
\begin{align*}
 \int_{\Omega\setminus D} \sqrt{\omega^\eps} L\nabla v^\eps \cdot \varphi \d x \leq \|\sqrt{\omega^\eps} L\nabla v^\eps\|_{L^2(\Omega)} \|\varphi\|_{L^2((\Omega\setminus D)\cap {\rm supp}(\omega^\eps))} \to 0\quad\text{as } \eps\to 0,
\end{align*}
i.e., $\sqrt{\omega^\eps} L\nabla v^\eps \rightharpoonup \chi_D L \nabla v$ in $L^2(\Omega)^n$ as $\eps \to 0$. 
 It remains to show that 
 $\|\sqrt{\omega^\eps} L\nabla v^\eps\|_{L^2(\Omega)}\to \|\chi_D L\nabla v\|_{L^2(\Omega)}$ as $\eps\to 0$.
 Testing $b^\eps(u^\eps,v^\eps,r)=0$ with $r=v^\eps-v-\langle v^\eps-v,1\rangle_{\Ue}/\langle 1,1\rangle_{\Ue}\in \H^\eps_\diamond$, and applying Cauchy-Schwarz's and Young's inequality yields
 \begin{align*}
  \|\sqrt{\omega^\eps} L\nabla v^\eps\|^2_{L^2(\Omega)}&=\langle M\nabla v^\eps,\nabla v\rangle_{\omega^\eps} + \langle r,u^\eps\rangle_{\Ue}\\
  &\leq \frac{1}{2}\|\sqrt{\omega^\eps}L\nabla v^\eps\|^2_{L^2(\Omega)} +\frac{1}{2}\|\sqrt{\omega^\eps}L \nabla v\|^2_{L^2(\Omega)} + \|r\|_{\Ue} \|u^\eps\|_{\Ue}.
 \end{align*}
 Since $\|r\|_{\Ue}\leq 2\|v^\eps-v\|_{\Ue}$, this reads as
 \begin{align}\label{eq:help1}
  \|\sqrt{\omega^\eps} L\nabla v^\eps\|^2_{L^2(\Omega)} \leq \|\sqrt{\omega^\eps}L \nabla v\|^2_{L^2(\Omega)} + 4 \|v^\eps-v\|_{\Ue} \|u^\eps\|_{\Ue}.
 \end{align}
  First, we observe by using Lebesgue's dominated convergence theorem that
  \begin{align*}
   \|\sqrt{\omega^\eps}L \nabla v\|^2_{L^2(\Omega)} \to  \int_D M\nabla v\cdot \nabla v\d x = \| \chi_D L\nabla v\|_{L^2(\Omega)}^2\quad\text{as } \eps \to 0.
  \end{align*}
  Next, we will show that $\|v^\eps-v\|_{\Ue}$ vanishes as $\eps \to 0$. By compactness of the embedding $H^1(D)\hookrightarrow L^2(\partial H)$, $v^\eps-v \rightharpoonup 0$ in $H^1(D)$ implies $v^\eps - v\to 0$ in $L^2(\partial H)$ by extracting another subsequence if necessary. Applying Theorem~\ref{thm:estimate} (i) to $v^\eps-v$ we obtain 
  \begin{align*}
   \|v^\eps-v\|_{\Ue} \leq C \sqrt{\eps}\|v^\eps-v\|_{\H^\eps} + \|v^\eps-v\|_{L^2(\partial H)} \to 0\quad \text{ as } \eps\to 0.
  \end{align*}
 By assumption $\{u^\eps\}$ is bounded in $\Ue$, and hence it follows from \eqref{eq:help1} that
 \begin{align}\label{eq:inter1}
  \limsup_{\eps\to 0} \|\sqrt{\omega^\eps}L \nabla v^\eps\|^2_{L^2(\Omega)}\leq \|\chi_D L\nabla v\|_{L^2(\Omega)}^2.
 \end{align}
Weak lower semicontinuity of the norm further implies 
\begin{align*}
  \|\chi_D L\nabla v\|_{L^2(\Omega)}\leq \liminf_{\eps\to 0}  \|\sqrt{\omega^\eps}L\nabla v^\eps\|_{L^2(\Omega)},
\end{align*}
i.e., $\|\sqrt{\omega^\eps} L\nabla v^\eps\|_{L^2(\Omega)}\to \|\chi_D L\nabla v\|_{L^2(\Omega)}$ as $\eps\to 0$, which yields the first assertion together with the ellipticity of $M$ (and consequent uniform bounds on the eigenvalues of $L$).

To show the second assertion, we infer from the first assertion that there exists another subsequence $\{\omega^\eps \nabla v^\eps\}$ which converges to $\chi_D \nabla v$ a.e. in $\Omega$ as $\eps\to 0$. As $\omega^\eps\geq 1/2$ on $D$ we further have that for this subsequence
$\nabla v^\eps$ converges to $\nabla v$ a.e. in $D$.
Moreover, with the same argument
$|\nabla v^\eps|^2 \leq 2 \omega^\eps |\nabla v^\eps|^2$ on $D$.
As $\omega^\eps |\nabla v^\eps|^2$ converges to $|\nabla v|^2$ in $L^1(D)$ by the first part, we obtain $\nabla v^\eps\to \nabla v$ in $L^2(D)$ by dominated convergence.
Compactness of the embedding $H^1(D)\hookrightarrow L^2(D)$ further yields $v^\eps\to v$ in $L^2(D)$ (for a subsequence), which concludes the proof.
\end{proof}

The next lemma basically resembles the a priori estimates of \cite{Brezzi74}. We state it explicitly since the structure of the estimate will be of importance below.
The proof is omitted.
%
\begin{lemma}\label{lem:apriori_Tik_eps}
 Let $(u_{\alpha,\delta}^\eps,v_{\alpha,\delta}^\eps,p_{\alpha,\delta}^\eps)$ be a saddle-point of $L^\eps$. Then there exists $C>0$ such that
 \begin{align*}
  \| v^\eps_{\alpha,\delta}-\tilde f^\delta\|_{\Me}^2 + \alpha \|u^\eps_{\alpha,\delta}\|^2_{\Ue} \leq C(\delta^2 + \alpha\| u^\dagger\|_{L^2(\partial H)}^2 + \eps^3 \|v^\dagger\|_{W^{3,\infty}(D)}^2).
 \end{align*}
\end{lemma}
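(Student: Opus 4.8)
The plan is to exploit that the primal component $(u^\eps_{\alpha,\delta},v^\eps_{\alpha,\delta})$ of the saddle-point is precisely the minimizer of the constrained Tikhonov functional \eqref{eq:Tikhonov_diffuse}, and then to insert a well-chosen competitor. Since $J^\eps$ is convex and the constraint $b^\eps(u,v;r)=0$ is affine with the inf-sup property of Lemma~\ref{lemma:infsup}, the saddle-point conditions are equivalent to minimizing $J^\eps(u,v)$ over all $(u,v)\in\Ue\times\H^\eps_\diamond$ satisfying \eqref{eq:diffuseweakform}; equivalently, the reduced functional $u\mapsto J^\eps(u,F^\eps u)$ is minimized. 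Hence for every admissible pair $(u,v)$ one has $J^\eps(u^\eps_{\alpha,\delta},v^\eps_{\alpha,\delta})\le J^\eps(u,v)$. (This reproduces, in a more transparent form, the structure of the a priori estimates of \cite{Brezzi74}.)

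The competitor I would use is $(\tilde u^\dagger, v^\eps)$, where $\tilde u^\dagger=E_H u^\dagger$ and $v^\eps\in\H^\eps_\diamond$ is exactly the solution of \eqref{eq:diffuseweakform} with data $\tilde u^\dagger$ studied in Lemma~\ref{lem:perturbation}; this pair is admissible by construction. The minimizing property then gives
\begin{align*}
\tfrac12\|v^\eps_{\alpha,\delta}-\tilde f^\delta\|_{\Me}^2 + \tfrac{\alpha}{2}\|u^\eps_{\alpha,\delta}\|_{\Ue}^2 \le \tfrac12\|v^\eps-\tilde f^\delta\|_{\Me}^2 + \tfrac{\alpha}{2}\|\tilde u^\dagger\|_{\Ue}^2,
\end{align*}
so it remains to bound the two terms on the right. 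The regularization term is immediate from boundedness of the extension $E_H$ (Lemma~\ref{lem:extension}): $\|\tilde u^\dagger\|_{\Ue}\le C\|u^\dagger\|_{L^2(\partial H)}$.

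For the data-misfit term I would split, using $f^\dagger=v^\dagger_{\mid\partial B}$ and $\tilde f^\delta=E_B f^\delta$,
\begin{align*}
\|v^\eps-\tilde f^\delta\|_{\Me} \le \|v^\eps-v^\dagger\|_{\Me} + \|v^\dagger - E_B f^\dagger\|_{\Me} + \|E_B(f^\dagger-f^\delta)\|_{\Me},
\end{align*}
and estimate the three pieces separately: the first by the diffuse trace embedding $\H^\eps\hookrightarrow\Me$ (Lemma~\ref{lem:diffusetrace}) together with the $\eps^{3/2}$ perturbation bound of Lemma~\ref{lem:perturbation}; the second, which measures the deviation of $v^\dagger$ from the constant-off extension of its own boundary trace, by an extension-consistency estimate of the type in Lemma~\ref{lem:error_diffuse_integral}(iv) (now with weight $\gamma_B$), again of order $\eps^{3/2}\|v^\dagger\|_{W^{3,\infty}(D)}$; and the third by \eqref{eq:noise_extended}, giving $C\delta$. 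Squaring the resulting bound $\|v^\eps-\tilde f^\delta\|_{\Me}\le C(\delta+\eps^{3/2}\|v^\dagger\|_{W^{3,\infty}(D)})$ and combining with the regularization term yields the claim after multiplying by two.

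The main obstacle is the careful treatment of the middle piece $\|v^\dagger-E_B f^\dagger\|_{\Me}$. A naive pointwise bound $|v^\dagger(x)-v^\dagger(\bar x)|\le C\eps$ in the interfacial layer only yields order $\eps$, and hence $\eps^2$ after squaring, which is too weak for the stated $\eps^3$. Obtaining the sharp $\eps^{3/2}$ rate requires the refined cancellation in the diffuse boundary integral encoded in the appendix estimates, exactly as in the proof of Lemma~\ref{lem:perturbation}. A secondary point to make precise is the equivalence between the saddle-point and the constrained minimizer, which relies only on the convexity of $J^\eps$ and the inf-sup condition of Lemma~\ref{lemma:infsup} already established, together with the unique solvability of \eqref{eq:diffuseweakform} from Lemma~\ref{lem:existence_diffuse}.
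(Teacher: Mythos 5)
Your overall strategy is sound and is, in substance, the route the paper intends: the paper omits its proof, but the surrounding material (Theorem~\ref{uniqsoln} and the right-hand-side estimates in Lemma~\ref{lem:rhs1}) shows it is a Brezzi-type a priori estimate applied to the error relative to the pair $(\tilde u^\dagger,v^\eps)$, which reduces to bounding exactly the two quantities you isolate, namely $\|\tilde u^\dagger\|_{\Ue}$ and $\|v^\eps-\tilde f^\delta\|_{\Me}$; your minimization-with-competitor argument is a transparent equivalent of this. The treatment of the regularization term via Lemma~\ref{lem:extension}, of the noise term via \eqref{eq:noise_extended}, and of $\|v^\eps-v^\dagger\|_{\Me}$ via Lemma~\ref{lem:diffusetrace} combined with Lemma~\ref{lem:perturbation} are all correct.

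The gap is in the middle piece $g:=v^\dagger-E_Bf^\dagger$, and your appeal to Lemma~\ref{lem:error_diffuse_integral}\,(iv) does not close it. Estimate (iv) is intrinsically bilinear: it bounds the pairing of $g$ (which indeed vanishes on $\partial B$) against a test function measured in $W^{1,2}(\Omega;\omega^\eps)$, i.e.\ it yields a bound in the dual norm of $\H^\eps$, not in $\Me$. You cannot recover the $\Me$-norm from it: the supremum defining $\|g\|_{\Me}$ runs over $w$ normalized in the much weaker $\Me$-norm, which does not control $\|w\|_{W^{1,2}(\Omega;\omega^\eps)}$, and inserting $w=g$ itself only gives $\|g\|_{\Me}^2\leq C\eps^{3/2}\|g\|_{W^{2,2}(\Omega;\omega^\eps)}\|g\|_{W^{1,2}(\Omega;\omega^\eps)}$, i.e.\ $\|g\|_{\Me}=O(\eps^{3/4})$, far short of $\eps^{3/2}$. (Keeping the term bilinear inside the Brezzi estimate does not help either: there it is paired against $\|w\|_{\H^\eps}\leq C\alpha^{-1/2}\|(q,w)\|_\alpha$, producing $\eps^3/\alpha$ instead of $\eps^3$ in the final bound.) The correct tool is Lemma~\ref{lem:error_diffuse_integral}\,(iii), whose leading term is $\eps\|\partial_n v^\dagger\|_{L^2(\partial B)}$; the $\eps^{3/2}$ rate therefore requires $\partial_n v^\dagger=0$ on $\partial B$. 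The Neumann condition only gives $n\cdot M\nabla v^\dagger=0$ there, so one must invoke assumption \eqref{eq:M_scalar} (or the modified extension of Remark~\ref{rem:different_direction}) to conclude $\partial_n v^\dagger=0$ --- this is exactly how the paper estimates the identical term $\langle v^\dagger-\tilde v^\dagger,w\rangle_{\Me}$ in Lemma~\ref{lem:rhs1}. This is not a citation technicality: if $Mn$ is not parallel to $n$, then $g(x+tn(x))\approx t\,\partial_n v^\dagger(x)$ in the interfacial layer, and \eqref{eq:fubini} gives $\|g\|_{\Me}\approx \tfrac{\eps}{\sqrt 3}\|\partial_n v^\dagger\|_{L^2(\partial B)}$, so the stated $\eps^3$ genuinely degrades to $\eps^2$. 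Your proof (like the paper's own placement of this lemma, before \eqref{eq:M_scalar} is introduced) needs to state and use that assumption at this step.
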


Using similar assumptions as in the standard inverse problem theory \cite{EHN96}, we obtain the following convergence result.
\begin{thm}[Convergence]
 Let $\{(u_{\alpha,\delta}^\eps,v_{\alpha,\delta}^\eps,p_{\alpha,\delta}^\eps)\}$ be a sequence of saddle-points of $L^\eps$ for $\eps,\alpha,\delta>0$. If $\alpha$ and $\eps$ are chosen such that $\eps(\alpha,\delta)\to 0$ and $\alpha(\delta)\to 0$ as $\delta\to 0$, and $\delta^2/\alpha$ and $\eps^3/\alpha$ are bounded. Then there exists a constant $C$ independent of $\eps$, $\delta$ and $\alpha$ such that
 \begin{align*}
  \lim_{\delta\to 0} \|u^\eps_{\alpha,\delta} - \tilde u^\dagger\|_{(\H^\eps_\diamond)'}=0,\quad\text{and}\quad \|v^\eps_{\alpha,\delta}- \tilde f^\dagger\|_{\Me}\leq C \sqrt{\alpha}\quad\text{and}\quad \|v^\eps_{\alpha,\delta}- f^\dagger\|_{L^2(\partial B)}\leq C \sqrt{\alpha+\eps}.
 \end{align*}
\end{thm}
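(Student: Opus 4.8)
The plan is to treat the three assertions in increasing order of difficulty, reading off the two $v$-estimates directly from the a~priori bound of Lemma~\ref{lem:apriori_Tik_eps} and reserving the real work for the convergence of $u$ in the dual norm. Since $\delta^2/\alpha$ and $\eps^3/\alpha$ are bounded, Lemma~\ref{lem:apriori_Tik_eps} gives at once $\|u^\eps_{\alpha,\delta}\|_{\Ue}\le C$ and $\|v^\eps_{\alpha,\delta}-\tilde f^\delta\|_{\Me}\le C\sqrt\alpha$. Combining the latter with the noise bound \eqref{eq:noise_extended} in the form $\|\tilde f^\delta-\tilde f^\dagger\|_{\Me}\le C\delta\le C\sqrt\alpha$ and the triangle inequality yields the second assertion $\|v^\eps_{\alpha,\delta}-\tilde f^\dagger\|_{\Me}\le C\sqrt\alpha$. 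For the third assertion I would pass from the diffuse norm $\Me$ to the sharp norm $L^2(\partial B)$ by the reverse trace estimate of Theorem~\ref{thm:estimate} applied to $w=v^\eps_{\alpha,\delta}-v^\dagger$, namely $\|w\|_{L^2(\partial B)}\le C(\sqrt\eps\,\|w\|_{\H^\eps}+\|w\|_{\Me})$. Here $\|w\|_{\H^\eps}\le C$ follows from $\|u^\eps_{\alpha,\delta}\|_{\Ue}\le C$ via Lemma~\ref{lem:existence_diffuse} together with the uniform bound $\|v^\dagger\|_{\H^\eps}\le C$, while $\|w\|_{\Me}\le\|v^\eps_{\alpha,\delta}-\tilde f^\dagger\|_{\Me}+\|\tilde f^\dagger-v^\dagger\|_{\Me}\le C(\sqrt\alpha+\eps)$, the last term being controlled by the constant-off extension error $\|E_Bf^\dagger-v^\dagger\|_{\Me}\le C\eps$. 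Collecting terms and using $\sqrt\eps,\sqrt\alpha,\eps\le\sqrt{\alpha+\eps}$ gives $\|v^\eps_{\alpha,\delta}-f^\dagger\|_{L^2(\partial B)}\le C\sqrt{\alpha+\eps}$.

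For the first assertion the key observation is that the dual norm of $u^\eps_{\alpha,\delta}-\tilde u^\dagger\in\Ue\hookrightarrow(\H^\eps_\diamond)'$ is computed by pairing against $\psi\in\H^\eps_\diamond$ through the $\Ue$-inner product. Testing the constraint equation of \eqref{saddlepoint} (the forward equation for $v^\eps_{\alpha,\delta}$) and the forward equation of Lemma~\ref{lem:perturbation} for the solution $v^\eps$ with data $\tilde u^\dagger$ with the same $\psi$ and subtracting gives the identity
\begin{align*}
  \langle u^\eps_{\alpha,\delta}-\tilde u^\dagger,\psi\rangle_{\Ue}=\langle M\nabla(v^\eps_{\alpha,\delta}-v^\eps),\nabla\psi\rangle_{\omega^\eps}.
\end{align*}
Cauchy--Schwarz and \eqref{eq:ellipticity} then bound the dual norm by $\tfrac1m\|\sqrt{\omega^\eps}\nabla(v^\eps_{\alpha,\delta}-v^\eps)\|_{L^2(\Omega)}$. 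Since $\|v^\dagger-v^\eps\|_{\H^\eps}\le C\eps^{3/2}\to0$ by Lemma~\ref{lem:perturbation}, it remains only to prove that $\|\sqrt{\omega^\eps}\nabla(v^\eps_{\alpha,\delta}-v^\dagger)\|_{L^2(\Omega)}\to0$ as $\delta\to0$.

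This last convergence is where the $\eps$-dependent spaces bite, and it is the main obstacle. I would invoke Proposition~\ref{prop:convergence1}, whose hypotheses hold because $\|u^\eps_{\alpha,\delta}\|_{\Ue}\le C$ and the constraint $b^\eps(u^\eps_{\alpha,\delta},v^\eps_{\alpha,\delta};r)=0$ is satisfied, to extract a subsequence with $v^\eps_{\alpha,\delta}\to\bar v$ in $H^1(D)$ and $\sqrt{\omega^\eps}\nabla v^\eps_{\alpha,\delta}\to\chi_D\nabla\bar v$ in $L^2(\Omega)$. The crux is to identify $\bar v=v^\dagger$, since the unknown $u^\eps_{\alpha,\delta}$ itself lives in the moving space $\Ue$ and cannot be passed to a limit directly. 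For the Dirichlet data I would combine the strong $H^1(D)$-convergence of traces with the third assertion to get $\bar v|_{\partial B}=f^\dagger=v^\dagger|_{\partial B}$. For the interior equation I would pass to the limit in the constraint tested against any $r\in H^1(\Omega)$ that vanishes in a neighbourhood of $\partial H$: such $r$ lies in $\H^\eps_\diamond$ and annihilates the $\Ue$-term for small $\eps$, so in the limit $\int_D M\nabla\bar v\cdot\nabla r\,\d x=0$, which delivers $\mathrm{div}(M\nabla\bar v)=0$ in $D$ and the natural condition $n\cdot M\nabla\bar v=0$ on $\partial B$. Thus $\bar v-v^\dagger$ solves the homogeneous elliptic Cauchy problem with vanishing Cauchy data on $\partial B$, and the uniqueness result \cite{Miranda70} used in Lemma~\ref{lem:uniqueness} forces $\bar v=v^\dagger$. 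As the limit is independent of the subsequence, the full sequence converges, and together with $\sqrt{\omega^\eps}\nabla v^\dagger\to\chi_D\nabla v^\dagger$ by dominated convergence this yields $\|\sqrt{\omega^\eps}\nabla(v^\eps_{\alpha,\delta}-v^\dagger)\|_{L^2(\Omega)}\to0$ and hence the first assertion. The only delicate points I anticipate are the careful bookkeeping of the $\eps$-dependent norms and checking that the reverse trace estimate of Theorem~\ref{thm:estimate} is available for $\partial B$ in exactly the form used above.
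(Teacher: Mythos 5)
Your proof is correct, and its skeleton coincides with the paper's: the a priori bound of Lemma~\ref{lem:apriori_Tik_eps} plus \eqref{eq:noise_extended} for the $\Me$-estimate, the uniform bound on $\|u^\eps_{\alpha,\delta}\|_{\Ue}$ feeding Proposition~\ref{prop:convergence1}, a diffuse-to-sharp trace comparison for the $L^2(\partial B)$-estimate, and the identity $\langle u^\eps_{\alpha,\delta}-\tilde u^\dagger,w\rangle_{\Ue}=\langle M\nabla(v^\eps_{\alpha,\delta}-v^\eps),\nabla w\rangle_{\omega^\eps}$ combined with Lemma~\ref{lem:perturbation} and Proposition~\ref{prop:convergence1} for the dual-norm convergence.

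Two deviations are worth recording. First, your identification of the limit $\bar v=v^\dagger$ is genuinely different in execution: the paper notes that the trace limit equals $f^\dagger\in{\rm ran}(F)$, obtains $u=u^\dagger$ from injectivity (Lemma~\ref{lem:uniqueness}), and then asserts $v=v^\dagger$ ``by the definition of $F$'' --- a step which tacitly presumes that the $H^1(D)$-limit solves the sharp forward problem, something Proposition~\ref{prop:convergence1} does not provide. You verify this missing fact directly: testing the constraint with $r\in H^1(\Omega)$ vanishing near $\partial H$ (which indeed lies in $\H^\eps_\diamond$ and annihilates the $\Ue$-pairing once the $\eps$-layer around $\partial H$ is inside the region where $r=0$), passing to the limit using the strong convergence of $\sqrt{\omega^\eps}\nabla v^\eps_{\alpha,\delta}$, and then applying the Cauchy-problem uniqueness of \cite{Miranda70} to $\bar v - v^\dagger$. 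This is the same tool that underlies Lemma~\ref{lem:uniqueness}, but deployed so as to close a step the paper leaves implicit; it buys a self-contained argument at the cost of a few extra lines. Second, a citation correction: the ``reverse trace estimate'' $\|w\|_{L^2(\partial B)}\le C(\sqrt{\eps}\,\|w\|_{\H^\eps}+\|w\|_{\Me})$ is not Theorem~\ref{thm:estimate}, which bounds the diffuse norm by the sharp one, i.e.\ the opposite direction; the inequality you need follows from the lower bound in Lemma~\ref{lem:extension} (with $c(\eps)\to1$) together with Lemma~\ref{lem:error_diffuse_integral}~(ii), which is precisely the combination the paper uses. With that attribution fixed, your treatment of the third assertion --- applied to $w=v^\eps_{\alpha,\delta}-v^\dagger$ with the extra term $\|\tilde f^\dagger-v^\dagger\|_{\Me}$ controlled by Lemma~\ref{lem:error_diffuse_integral}~(ii) or (iii) --- is a harmless variant of the paper's chain, yielding the same $O(\sqrt{\alpha+\eps})$ rate.
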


\begin{proof}
Applying \eqref{eq:noise_extended} and Lemma~\ref{lem:apriori_Tik_eps} yields
 \begin{align}\label{eq:conv_meas}
  \| v^\eps_{\alpha,\delta} -\tilde f^\dagger\|_{\Me} \leq \| v^\eps_{\alpha,\delta} -\tilde f^\delta\|_{\Me} + \|\tilde f^\delta-\tilde f^\dagger\|_{\Me}
  \leq C \sqrt{\alpha}
\end{align}
by choice of $\alpha$ and $\eps$.
The a priori estimate of Lemma~\ref{lem:apriori_Tik_eps} further asserts that
\begin{align*}
 \|u^\eps_{\alpha,\delta}\|_{\Ue}^2 \leq C(\frac{\delta^2}{\alpha} + \| u^\dagger\|_{L^2(\partial H)}^2 + \frac{\eps^3}{\alpha} \|v^\dagger\|^2_{W^{3,\infty}(D)}).
\end{align*}
Since $\delta^2/\alpha$ and $\eps^3/\alpha$ are bounded, $\|u^\eps_{\alpha,\delta}\|_{\Ue}$ is bounded (uniformly in $\eps$).
By Lemma~\ref{prop:convergence1} there exists $v\in H^1(D)$ such that for a subsequence, relabeled if necessary, $v^\eps_{\alpha,\delta} \to v$ in $H^1(D)$ as $\delta \to 0$.
Moreover, applying \eqref{eq:conv_meas} and Lemma~\ref{lem:error_diffuse_integral} (ii) yields
\begin{align*}
 \|v^{\eps}_{\alpha,\delta}-f^\dagger\|_{L^2(\partial B)}\leq C \|\tilde v^{\eps}_{\alpha,\delta}-\tilde f^\dagger\|_{\Me}\leq C \sqrt{\eps}\|v^\eps_{\alpha,\delta}\|_{\H^\eps}+ C\sqrt{\alpha}\to 0
\end{align*}
as $\delta\to 0$. In particular, $v=f^\dagger=v^\dagger \in {\rm ran}(F)\subset L^2(\partial B)$. Hence, there exists $u\in L^2(\partial H)$ such that $Fu=v$. Lemma~\ref{lem:uniqueness} implies $u=u^\dagger$. The definition of $F$ and unique solvability of \eqref{eq:sharpweakform} implies $v=v^\dagger$ in $D$.
 To show $u_{\alpha,\delta}^\eps\to \tilde u^\dagger$ in $(\H^\eps_\diamond)'$ let $w\in \H^\eps_\diamond$, and let $v^\eps \in \H^\eps_\diamond$ denote the solution to \eqref{eq:diffuseweakform} with right-hand side $\tilde u^\dagger$; cf. Lemma~\ref{lem:perturbation}. Then
 \begin{align*}
  \langle u_{\alpha,\delta}^\eps-\tilde u^\dagger, w\rangle_{\Ue} &= \langle M\nabla (v^\eps_{\alpha,\delta}-v^\eps),\nabla w\rangle_{\omega^\eps} =  \langle M\nabla (v^\eps_{\alpha,\delta}-v^\dagger),\nabla w\rangle_{\omega^\eps}+ \langle M\nabla (v^\dagger-v^\eps),\nabla w\rangle_{\omega^\eps}\\
  &\leq C(\|\sqrt{\omega^\eps}\nabla(v^\eps_{\alpha,\delta}-v^\dagger)\|_{L^2(\Omega)} + \|v^\dagger-v^\eps\|_{\H^\eps}) \|w\|_{\H^\eps}.
 \end{align*}
 In view of Proposition~\ref{prop:convergence1} and Lemma~\ref{lem:perturbation}, the right-hand side vanishes as $\delta\to 0$.
The uniqueness result, Lemma~\ref{lem:uniqueness}, further allows to transfer the convergence to the whole sequence.
\end{proof}
\subsection{Convergence rates}\label{sec:rates}
In order to show convergence rates recall that $u^\dagger$ is the minimum-norm solution of $Fu=f^\dagger$, i.e. a minimizer of
\begin{align*}
 \min \|u\|_{L^2(\partial H)}^2 \quad\text{such that } v_{\mid\partial B}=f^\dagger\quad\text{and}\quad b(u,v;r)=0 \text{ for all } r\in H^1_\diamond(D).
\end{align*}
The associated Lagrangian writes as
\begin{align}\label{eq:Lagrangian}
 L(u,v,\lambda,p)=\|u\|_{L^2(\dH)}^2 - \langle v-f^\dagger,\lambda\rangle+ b(u,v;p).
\end{align}
Assuming that there exists $(\lambda^\dagger,p^\dagger)$ such that $(u^\dagger,v^\dagger,\lambda^\dagger,p^\dagger)$ is a saddle-point of $L$, the following optimality conditions hold true
\begin{align}
 \langle u^\dagger, h_u\rangle_{\partial H}-\langle h_u,p^\dagger\rangle_{\partial H}&=0 \quad\text{for all } h_u\in L^2(\partial H), \label{eq:sad1}\\
 -\langle h_v,\lambda^\dagger\rangle_{\partial B} + \langle M\nabla h_v, \nabla p^\dagger \rangle_D &= 0\quad \text{for all } h_v \in H_\diamond^1(D),\label{eq:sad2}\\
 \langle v^\dagger-f^\dagger,h_\lambda\rangle_{\partial B} &=0\quad \text{for all } h_\lambda\in L^2(\partial B),\label{eq:sad3}\\
 b(u^\dagger,v^\dagger;h_p) &=0\quad \text{for all } h_p\in H_\diamond^1(D).\label{eq:sad4}
\end{align}
Eq. \eqref{eq:sad1} implies $u^\dagger=p^\dagger$ on $\partial H$, where $p^\dagger$ satisfies the adjoint equation \eqref{eq:sad2}, i.e.
\begin{align}\label{eq:source_condition}
 u^\dagger = F^* \lambda^\dagger,
\end{align}
which is the usual source condition.
Vice versa, if \eqref{eq:source_condition} holds true, then \eqref{eq:sad1}--\eqref{eq:sad2} are satisfied,
and $(u^\dagger,v^\dagger,\lambda^\dagger,p^\dagger)$ is a saddle-point of $L$.
In order to simplify the presentation, we will assume that $n(x)$ is an eigenvector of $M(x)$ for $x\in\partial D$, i.e.
\begin{align}\label{eq:M_scalar}
 M(x)n(x)=a(x)n(x)\quad\text{for } x\in\partial D
\end{align}
for some scalar function $a$ satisfying $m\leq a(x)\leq 1/m$ for all $x\in\partial D$ by \eqref{eq:ellipticity}.

\begin{rem}
 Formally, $p^\dagger$ is a solution to 
 \begin{align}\label{eq:p_strong}
  -{\rm div}(M\nabla p^\dagger) = 0 \quad \text{in } D,\quad
   n\cdot M\nabla p^\dagger  = 0 \quad \text{on } \partial H,\quad
   n\cdot M\nabla p^\dagger  = \lambda^\dagger  \quad \text{on } \partial B.
 \end{align}
 Since $n\cdot M\nabla v^\dagger = u^\dagger=p^\dagger$ on $\partial H$ if \eqref{eq:source_condition} holds, regularity assumptions on $u^\dagger$ or $v^\dagger$ can be translated to $p^\dagger$ and $\lambda^\dagger$. Similar to the assumptions on $u^\dagger$ and $v^\dagger$, we will assume that $p^\dagger \in W^{3,\infty}(D)$ in this paper. In particular, $p^\dagger$ is a strong solution to \eqref{eq:p_strong}.
\end{rem}

Assuming \eqref{eq:source_condition} holds true, there exists a saddle-point $(u^\dagger,v^\dagger,\lambda^\dagger,p^\dagger)$ of the Lagrangian defined in \eqref{eq:Lagrangian}.
The error $(u^\eps_{\alpha,\delta}-\tilde u^\dagger, v^\eps_{\alpha,\delta}-v^\dagger, p^\eps_{\alpha,\delta} - \alpha p^\dagger)$ satisfies the saddle-point problem \eqref{saddlepoint} with right-hand side
\begin{align}
   f^\eps(q,w)&= \langle \tilde f^\delta,w\rangle_{\Me} - a^\eps(\tilde u^\dagger,v^\dagger;q,w) - b^\eps(q,w;\alpha p^\dagger), \label{eq:error1}
   \\
   g^\eps(r) &=  - b^\eps(\tilde u^\dagger,v^\dagger;r) \label{eq:error2}
\end{align}
with $(q,w)\in \Ue\times \H^\eps_\diamond$ and $r\in \H^\eps_\diamond$.
In order to obtain error estimates we will estimate the right-hand side of the latter saddle-point problem and employ Theorem~\ref{uniqsoln}.

\begin{lemma}\label{lem:rhs1}
 Let \eqref{eq:noise_level}, \eqref{eq:M_scalar}, and \eqref{eq:source_condition} hold and let $f^\eps$ be defined by \eqref{eq:error1}. Then there exists a constant $C>0$ independent of $\eps$, $\alpha$ and $\delta$ such that 
  \begin{align*}
  \|f^\eps\|_{(\Ue\times\H^\eps_\diamond)'} \leq C (\delta+ \eps^{3/2}\|v^\dagger\|_{W^{3,\infty}(D)}+\eps^{3/2}\alpha^{1/2} \|p^\dagger\|_{W^{3,\infty}(D)}+\alpha \|\lambda^\dagger\|_{L^2(\partial B)}).
 \end{align*} 
\end{lemma}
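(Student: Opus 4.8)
The plan is to expand the functional \eqref{eq:error1} explicitly, regroup its terms according to the two test functions $q\in\Ue$ and $w\in\H^\eps_\diamond$, and estimate each group against $\|(q,w)\|_\alpha$, so that after dividing and taking the supremum the four contributions $\delta$, $\eps^{3/2}\|v^\dagger\|_{W^{3,\infty}(D)}$, $\eps^{3/2}\alpha^{1/2}\|p^\dagger\|_{W^{3,\infty}(D)}$ and $\alpha\|\lambda^\dagger\|_{L^2(\dB)}$ emerge. Inserting the definitions of $a^\eps$ and $b^\eps$ and using the source condition \eqref{eq:source_condition} in the form $u^\dagger=p^\dagger$ on $\dH$, one obtains
\[ f^\eps(q,w)=\langle\tilde f^\delta-v^\dagger,w\rangle_{\Me}+\alpha\langle p^\dagger-\tilde u^\dagger,q\rangle_{\Ue}-\alpha\langle M\nabla w,\nabla p^\dagger\rangle_{\omega^\eps}. \]
First I would peel off the noise, writing $\langle\tilde f^\delta-v^\dagger,w\rangle_{\Me}=\langle\tilde f^\delta-\tilde f^\dagger,w\rangle_{\Me}+\langle\tilde f^\dagger-v^\dagger,w\rangle_{\Me}$ and bounding the first summand by $C\delta\|w\|_{\Me}\le C\delta\|(q,w)\|_\alpha$ through \eqref{eq:noise_extended}.

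The two remaining consistency terms $\langle\tilde f^\dagger-v^\dagger,w\rangle_{\Me}$ and $\alpha\langle p^\dagger-\tilde u^\dagger,q\rangle_{\Ue}$ are where assumption \eqref{eq:M_scalar} enters decisively. Since $Mn=an$ with $M$ symmetric, the conormal flux reduces to $n\cdot M\nabla=a\,\partial_n$; hence the natural condition $n\cdot M\nabla v^\dagger=0$ on $\dB$ forces $\partial_n v^\dagger=0$ on $\dB$, and \eqref{eq:p_strong} forces $\partial_n p^\dagger=0$ on $\dH$. Because $\tilde f^\dagger=E_B f^\dagger$ and $\tilde u^\dagger=E_H u^\dagger$ are constant along the normals, a Taylor expansion in the signed distance shows that on ${\rm supp}(|\nabla\omega^\eps|)$ the defects $\tilde f^\dagger-v^\dagger$ and $p^\dagger-\tilde u^\dagger$ have vanishing first-order (normal-derivative) part, hence are pointwise of size $\bigO(\eps^2)$. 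A plain Cauchy--Schwarz estimate in the weak norms then gives $\|\tilde f^\dagger-v^\dagger\|_{\Me}\le C\eps^{2}\|v^\dagger\|_{W^{2,\infty}(D)}$ and $\|p^\dagger-\tilde u^\dagger\|_{\Ue}\le C\eps^{2}\|p^\dagger\|_{W^{2,\infty}(D)}$. Using $\eps\le\eps_0$, the first yields $|\langle\tilde f^\dagger-v^\dagger,w\rangle_{\Me}|\le C\eps^{3/2}\|v^\dagger\|_{W^{3,\infty}(D)}\|w\|_{\Me}$, which contributes the second term with no power of $\alpha$; for the second I would use $\|q\|_{\Ue}\le\alpha^{-1/2}\|(q,w)\|_\alpha$, so that the prefactor $\alpha$ becomes $\alpha^{1/2}$ and the contribution is absorbed into $\eps^{3/2}\alpha^{1/2}\|p^\dagger\|_{W^{3,\infty}(D)}$. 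The essential point is that both are paired only with $\|w\|_{\Me}$, $\|q\|_{\Ue}$, never with $\|\cdot\|_{\H^\eps}$.

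The gradient term $-\alpha\langle M\nabla w,\nabla p^\dagger\rangle_{\omega^\eps}$ I would treat exactly as in the proof of Lemma~\ref{lem:perturbation}: diffuse integration by parts with $\nabla\omega^\eps=-n|\nabla\omega^\eps|$ gives
\[ \langle M\nabla w,\nabla p^\dagger\rangle_{\omega^\eps}=-\langle{\rm div}(M\nabla p^\dagger),w\rangle_{\omega^\eps}+\langle n\cdot M\nabla p^\dagger,w\rangle_{\Ue}+\langle n\cdot M\nabla p^\dagger,w\rangle_{\Me}. \]
Invoking the strong form \eqref{eq:p_strong}, i.e. ${\rm div}(M\nabla p^\dagger)=0$ in $D$, $n\cdot M\nabla p^\dagger=0$ on $\dH$ and $n\cdot M\nabla p^\dagger=\lambda^\dagger$ on $\dB$, the first term is controlled by Lemma~\ref{lem:error_diffuse_integral}~(i) and the two boundary defects by Lemma~\ref{lem:error_diffuse_integral}~(iv), each of size $C\eps^{3/2}\|p^\dagger\|_{W^{3,\infty}(D)}\|w\|_{\H^\eps}$, while the genuinely non-vanishing boundary part leaves the main contribution $\langle E_B\lambda^\dagger,w\rangle_{\Me}$. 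Multiplying by $\alpha$ and using $\|w\|_{\H^\eps}\le C\alpha^{-1/2}\|(q,w)\|_\alpha$ (the bound underlying Lemma~\ref{lem:continuity}) turns the residuals into $C\eps^{3/2}\alpha^{1/2}\|p^\dagger\|_{W^{3,\infty}(D)}\|(q,w)\|_\alpha$ (term three), whereas $\alpha\langle E_B\lambda^\dagger,w\rangle_{\Me}\le C\alpha\|\lambda^\dagger\|_{L^2(\dB)}\|w\|_{\Me}$ gives term four, $E_B$ being bounded. Collecting all estimates and dividing by $\|(q,w)\|_\alpha$ yields the assertion.

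The main obstacle is the $\alpha$-bookkeeping, specifically the recognition that the two consistency terms must be estimated in the weak topologies $\Me$, $\Ue$ via the second-order cancellation granted by \eqref{eq:M_scalar}, and not through the generic estimate $|\langle E g-g,w\rangle|\le C\eps^{3/2}\|w\|_{\H^\eps}$ of Lemma~\ref{lem:error_diffuse_integral}~(iv). Were one to use the latter for the data-misfit term, the conversion $\|w\|_{\H^\eps}\le C\alpha^{-1/2}\|(q,w)\|_\alpha$ would inject a spurious factor $\alpha^{-1/2}$ and degrade the second term to $\eps^{3/2}\alpha^{-1/2}$, which fails to be uniform as $\alpha\to0$. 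Thus the careful separation of which residual is tested against a gradient — and therefore picks up $\alpha^{1/2}$ — from which is tested only against a function value — and hence stays free of $\alpha$ — is the crux of the argument; the remaining manipulations are routine applications of Cauchy--Schwarz and the norm equivalences on $\Ue\times\H^\eps_\diamond$.
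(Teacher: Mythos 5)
Your proposal is correct and follows essentially the same route as the paper: the same rewriting of $f^\eps$ via the source condition, the same splitting into noise plus consistency terms estimated against $\|w\|_{\Me}$ and $\|q\|_{\Ue}$ only, the same diffuse integration by parts for the gradient term with Lemma~\ref{lem:error_diffuse_integral}~(i),(iv) and Lemma~\ref{lem:extension}, and the same $\alpha$-bookkeeping. The only cosmetic difference is that you re-derive the second-order cancellation for $\tilde f^\dagger-v^\dagger$ and $p^\dagger-\tilde p^\dagger$ by a Taylor expansion in the signed distance, whereas the paper simply cites Lemma~\ref{lem:error_diffuse_integral}~(iii), which encapsulates exactly that argument.
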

\begin{proof}
Let $(q,w)\in \Ue\times \H_\diamond^\eps$.
Using the source condition, i.e. $p^\dagger=u^\dagger$ on $\partial H$, we have that
\begin{align*}
 f^\eps(q,w) = \langle \tilde f^\delta-\tilde f^\dagger + \tilde v^\dagger - v^\dagger, w\rangle_{\Me}-\alpha \langle M\nabla w,\nabla p^\dagger\rangle_{\omega^\eps} +\alpha \langle p^\dagger-\tilde p^\dagger, q\rangle_{\Ue}.
\end{align*}
Using \eqref{eq:noise_extended}, Cauchy-Schwarz inequality and Lemma~\ref{lem:error_diffuse_integral} (iii) we obtain
\begin{align*}
  \langle \tilde f^\delta-\tilde f^\dagger, w\rangle_{\Me} + \langle v^\dagger-\tilde v^\dagger, w \rangle_{\Me}
  &\leq C (\delta+ \eps^{3/2} \|v^\dagger\|_{W^{2,2}(D)}) \|w\|_{\Me},
\end{align*}
where we used $\partial_n v^\dagger=0$ on $\partial B$ by \eqref{eq:M_scalar}. Since $\partial_n p^\dagger=0$ on $\partial H$ by \eqref{eq:M_scalar} and \eqref{eq:p_strong}, we similarly obtain with Lemma~\ref{lem:error_diffuse_integral} (iii) 
\begin{align*}
 \langle p^\dagger-\tilde p^\dagger, q \rangle_{\Ue}&\leq  C \eps^{3/2} \|p^\dagger\|_{W^{2,2}(D)} \|q\|_{\Ue}.
\end{align*}
Integration by parts and $-\nabla\omega^\eps=n|\nabla\omega^\eps|$ yield
\begin{align*}
  \langle M\nabla w, \nabla p^\dagger\rangle_{\omega^\eps} &= -\langle{\rm div}(M \nabla p^\dagger), w\rangle_{\omega^\eps} + \langle n\cdot M\nabla p^\dagger, w\rangle_{\Me}+ \langle n\cdot M\nabla p^\dagger, w\rangle_{\Ue}
\end{align*}
An application of Lemma~\ref{lem:error_diffuse_integral} (i) yields
\begin{align*}
  \langle{\rm div}(M \nabla p^\dagger), w\rangle_{\omega^\eps}\leq C \eps^{3/2}\|p^\dagger\|_{W^{3,\infty}(\Omega)} \|w\|_{\H^\eps},
\end{align*}
and, since $n\cdot M\nabla p^\dagger =0$ on $\partial H$, Lemma~\ref{lem:error_diffuse_integral} (iv) gives
\begin{align*}
 \langle n\cdot M\nabla p^\dagger, w\rangle_{\Ue} \leq C \eps^{3/2} \|p^\dagger\|_{W^{3,2}(\Omega;\omega^\eps)} \|w\|_{\H^\eps},
\end{align*}
as well as, using $n\cdot M\nabla p^\dagger=\lambda^\dagger$ on $\partial B$ and Lemma~\ref{lem:extension},
\begin{align*}
\langle n\cdot M\nabla p^\dagger, w\rangle_{\Me} &=  \langle n\cdot M\nabla p^\dagger -E_B(n\cdot M\nabla p^\dagger), w\rangle_{\Me} +\langle E_B \lambda^\dagger, w\rangle_{\Me} \\
&\leq C (\eps^{3/2}  \|p^\dagger\|_{W^{3,2}(\Omega;\omega^\eps)} \|w\|_{\H^\eps} + \|\lambda^\dagger\|_{L^2(\partial B)} \|w\|_{\Me}).
\end{align*}
Collecting the above estimates and using the definition of $\|(q,w)\|_\alpha$ yields the assertion.
\end{proof}

Using Lemma~\ref{lem:rhs1} and Lemma~\ref{lem:perturbation}, we infer from Theorem~\ref{uniqsoln} the following error estimate.
\begin{thm}\label{thm:error}
Let $0<\alpha\leq \alpha_0$ and $\eps>0$. Moreover, let \eqref{eq:noise_level}, \eqref{eq:M_scalar} and \eqref{eq:source_condition} hold. Then there exists $C>0$ independent of $\eps$ and $\alpha$ such that
\begin{align*}
  \alpha \|u^\eps_{\alpha,\delta}-\tilde u^\dagger\|_{\Ue}^2 + \alpha\| \nabla v^\eps_{\alpha,\delta}-\nabla v^\dagger\|_{L^2(\omega^\eps)}^2 + \|v^\eps_{\alpha,\delta}-v^\dagger\|_{\Me}^2 + \|p^\eps_{\alpha,\delta}-\alpha p^\dagger\|_{\H^\eps}^2\\
     \leq C \big(\delta^2+ \eps^3\|v^\dagger\|^2_{W^{3,\infty}(D)}+\eps^3\alpha \|p^\dagger\|^2_{W^{3,\infty}(D)}+\alpha^2 \|\lambda^\dagger\|_{L^2(\partial B)}^2\big).
\end{align*}
\end{thm}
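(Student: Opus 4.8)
The plan is to apply Theorem~\ref{uniqsoln} directly to the error triple $(u^\eps_{\alpha,\delta}-\tilde u^\dagger, v^\eps_{\alpha,\delta}-v^\dagger, p^\eps_{\alpha,\delta}-\alpha p^\dagger)$, which by construction solves the saddle-point problem \eqref{saddlepoint} with right-hand sides $f^\eps$ and $g^\eps$ given by \eqref{eq:error1}--\eqref{eq:error2}. The a priori estimate from Theorem~\ref{uniqsoln} then reads
\begin{align*}
 \alpha \|u^\eps_{\alpha,\delta}-\tilde u^\dagger\|_{\Ue}^2 + \alpha\| \nabla (v^\eps_{\alpha,\delta}-v^\dagger)\|_{L^2(\omega^\eps)}^2 + \|v^\eps_{\alpha,\delta}-v^\dagger\|_{\Me}^2 + \|p^\eps_{\alpha,\delta}-\alpha p^\dagger\|_{\H^\eps}^2 \\
 \leq C_E\big(\|f^\eps\|_{(\Ue\times\H^\eps_\diamond)'}^2 + \|g^\eps\|_{(\H^\eps_\diamond)'}^2\big).
\end{align*}
So the whole theorem reduces to bounding the two dual norms on the right. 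The first of these, $\|f^\eps\|_{(\Ue\times\H^\eps_\diamond)'}$, has already been estimated in Lemma~\ref{lem:rhs1}, yielding exactly the four terms $\delta$, $\eps^{3/2}\|v^\dagger\|_{W^{3,\infty}(D)}$, $\eps^{3/2}\alpha^{1/2}\|p^\dagger\|_{W^{3,\infty}(D)}$, and $\alpha\|\lambda^\dagger\|_{L^2(\partial B)}$, whose squares match the claimed bound.

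Thus the only remaining piece is the estimate of $\|g^\eps\|_{(\H^\eps_\diamond)'}$ with $g^\eps(r)=-b^\eps(\tilde u^\dagger,v^\dagger;r)$. First I would write out $b^\eps(\tilde u^\dagger,v^\dagger;r)=\langle M\nabla v^\dagger,\nabla r\rangle_{\omega^\eps}-\langle \tilde u^\dagger,r\rangle_{\Ue}$ and recognize that this is precisely the residual that appeared in the proof of Lemma~\ref{lem:perturbation}: indeed $v^\eps$ (the solution of \eqref{eq:diffuseweakform} with data $\tilde u^\dagger$) satisfies $\langle M\nabla v^\eps,\nabla r\rangle_{\omega^\eps}=\langle \tilde u^\dagger,r\rangle_{\Ue}$, so $g^\eps(r)=\langle M\nabla(v^\eps-v^\dagger),\nabla r\rangle_{\omega^\eps}$. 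By Cauchy--Schwarz and the ellipticity \eqref{eq:ellipticity} this is bounded by $C\|v^\dagger-v^\eps\|_{\H^\eps}\|r\|_{\H^\eps}$, and Lemma~\ref{lem:perturbation} gives $\|v^\dagger-v^\eps\|_{\H^\eps}\leq C\eps^{3/2}\|v^\dagger\|_{W^{3,\infty}(D)}$. Hence $\|g^\eps\|_{(\H^\eps_\diamond)'}\leq C\eps^{3/2}\|v^\dagger\|_{W^{3,\infty}(D)}$, whose square contributes to the $\eps^3\|v^\dagger\|^2_{W^{3,\infty}(D)}$ term.

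Combining the two dual-norm bounds, squaring, and using $(\sum_i a_i)^2\leq C\sum_i a_i^2$ (with a fixed number of terms) absorbs all cross terms into the four stated contributions; the $\eps^3\|v^\dagger\|^2_{W^{3,\infty}(D)}$ term collects the matching pieces from both $f^\eps$ and $g^\eps$. I expect no serious obstacle here, since the real work was already carried out in Lemmas~\ref{lem:perturbation} and~\ref{lem:rhs1}; the main point to be careful about is the powers of $\alpha$ attached to $p^\dagger$ and $\lambda^\dagger$, which arise because the Lagrange multiplier was scaled by $\alpha$ in the error variable $p^\eps_{\alpha,\delta}-\alpha p^\dagger$, and tracking these factors consistently through $a^\eps$ and $b^\eps$ (where $b^\eps$ carries the extra $1/\sqrt{\alpha}$ in its continuity constant, cf. Lemma~\ref{lem:continuity}) is what produces the asymmetric $\alpha$-weighting seen in Lemma~\ref{lem:rhs1}. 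Once these are accounted for, the assertion follows directly from Theorem~\ref{uniqsoln}.
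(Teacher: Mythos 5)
Your proposal is correct and follows exactly the route the paper intends: the paper's own ``proof'' is the single sentence that the estimate follows from Theorem~\ref{uniqsoln} applied to the error equation \eqref{eq:error1}--\eqref{eq:error2}, with $\|f^\eps\|_{(\Ue\times\H^\eps_\diamond)'}$ bounded by Lemma~\ref{lem:rhs1} and $\|g^\eps\|_{(\H^\eps_\diamond)'}$ bounded via Lemma~\ref{lem:perturbation}, which is precisely your decomposition, including the key identification $g^\eps(r)=\langle M\nabla(v^\eps-v^\dagger),\nabla r\rangle_{\omega^\eps}$ with $v^\eps$ the diffuse solution for data $\tilde u^\dagger$. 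The only detail worth noting is that your aside about the $1/\sqrt{\alpha}$ continuity constant of $b^\eps$ is unnecessary: the paper explicitly remarks that the a priori estimate of Theorem~\ref{uniqsoln} does not involve that constant, and the asymmetric $\alpha$-weighting is already built into Lemma~\ref{lem:rhs1} through the dual norm $\|\cdot\|_\alpha$.
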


With an appropriate choice of $\eps$ and $\alpha$ in terms of $\delta$ we obtain the overall optimal order of convergence:

\begin{cor}\label{cor:convergence_rates}
Let the assumptions of Theorem~\ref{thm:error} hold true. For the a priori choice $\alpha\approx\delta$ and $\eps\approx\delta^{2/3}$ we obtain the following convergence rates
\begin{align}\label{eq:convergence_rates}
 \|u^\eps_{\alpha,\delta}-\tilde u^\dagger\|_{\Ue} + \|\nabla v^\eps_{\alpha,\delta}-\nabla v^\dagger\|_{L^2(\omega^\eps)} = O(\sqrt{\delta})\quad\text{and}\quad
 \|v^\eps_{\alpha,\delta}-v^\dagger\|_{\Me} = O(\delta).
\end{align}
\end{cor}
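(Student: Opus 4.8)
The plan is to substitute the specified parameter choices $\alpha \approx \delta$ and $\eps \approx \delta^{2/3}$ directly into the error estimate of Theorem~\ref{thm:error} and track how each term on the right-hand side scales with $\delta$. This is essentially a bookkeeping exercise: the hard analytic work has already been done in establishing Theorem~\ref{thm:error} via the saddle-point stability result of Theorem~\ref{uniqsoln} and the right-hand side estimate of Lemma~\ref{lem:rhs1}. What remains is to verify that the chosen rates balance the various error contributions so that the left-hand side decays at the claimed orders.

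First I would examine the right-hand side bound $C\big(\delta^2+ \eps^3\|v^\dagger\|^2_{W^{3,\infty}(D)}+\eps^3\alpha \|p^\dagger\|^2_{W^{3,\infty}(D)}+\alpha^2 \|\lambda^\dagger\|_{L^2(\partial B)}^2\big)$. With $\eps \approx \delta^{2/3}$, the term $\eps^3$ becomes $\delta^2$, so the operator-perturbation contribution $\eps^3\|v^\dagger\|^2_{W^{3,\infty}(D)}$ is of order $\delta^2$, matching the noise term $\delta^2$. With $\alpha \approx \delta$, the mixed term $\eps^3 \alpha$ is of order $\delta^3$, which is negligible, and the adjoint term $\alpha^2$ is of order $\delta^2$. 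Hence the entire right-hand side is $O(\delta^2)$.

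Next I would read off the left-hand side. Since $\alpha \approx \delta$, from $\alpha \|u^\eps_{\alpha,\delta}-\tilde u^\dagger\|_{\Ue}^2 \leq C\delta^2$ we divide by $\alpha$ to get $\|u^\eps_{\alpha,\delta}-\tilde u^\dagger\|_{\Ue}^2 = O(\delta^2/\alpha) = O(\delta)$, giving $\|u^\eps_{\alpha,\delta}-\tilde u^\dagger\|_{\Ue} = O(\sqrt{\delta})$; the same reasoning applied to the term $\alpha\|\nabla v^\eps_{\alpha,\delta}-\nabla v^\dagger\|_{L^2(\omega^\eps)}^2$ yields $\|\nabla v^\eps_{\alpha,\delta}-\nabla v^\dagger\|_{L^2(\omega^\eps)} = O(\sqrt{\delta})$. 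For the data-misfit term, $\|v^\eps_{\alpha,\delta}-v^\dagger\|_{\Me}^2 \leq C\delta^2$ directly gives $\|v^\eps_{\alpha,\delta}-v^\dagger\|_{\Me} = O(\delta)$, with no division by $\alpha$ needed. These are exactly the rates asserted in \eqref{eq:convergence_rates}.

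I do not anticipate a genuine obstacle here, since all the analytic difficulty is absorbed into Theorem~\ref{thm:error}. The only point requiring a little care is confirming that the choice $\eps \approx \delta^{2/3}$ is the one that optimally balances the operator-perturbation error against the noise: demanding $\eps^3 \sim \delta^2$ forces $\eps \sim \delta^{2/3}$, and one should note that raising $\eps$ any higher would spoil the $\delta^2$ order of the right-hand side while lowering it would not improve the overall rate, which is already limited by the $\delta^2$ noise term. This is precisely the intuition flagged in the introduction that $\eps$ may be taken comparatively large relative to the noise level.
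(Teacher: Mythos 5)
Your proposal is correct and is exactly the argument the paper intends: the corollary is stated as an immediate consequence of Theorem~\ref{thm:error}, obtained by substituting $\alpha\approx\delta$, $\eps\approx\delta^{2/3}$ into its right-hand side (making all terms $O(\delta^2)$) and then dividing the $u$- and $\nabla v$-terms by $\alpha$. Your additional remark on why $\eps\approx\delta^{2/3}$ is the balancing choice matches the paper's own discussion following Corollary~\ref{cor:convergence_rates}.
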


\begin{rem}\label{rem:lower_regularity}
  If $v^\dagger,p^\dagger\in W^{1,\infty}(D)$ only, we have to replace $\eps^3$ in the previous estimates by $\eps$, cf. Lemma~\ref{lem:error_diffuse_integral}. The choice $\alpha\approx\delta$ and $\eps\approx\delta^2$ then yields \eqref{eq:convergence_rates}.
\end{rem}

\begin{rem}\label{rem:different_direction}
 Assumption \eqref{eq:M_scalar} can be bypassed, if one defines the extension off the interface $E_{Mn}v$ to be constant along the straight line $t\mapsto x+tM(x)n(x)$, $x\in\partial D$. Moreover, the estimates in the appendix have to be adapted in a similar way.
\end{rem}


We finally mention that a generalization of \eqref{eq:source_condition} to more general source conditions of the form 
$u^\dagger = (F^*F)^\mu \lambda^\dagger$  (with $0 < \mu \leq 1$) can be carried out in a similar way. The main change then concerns the last two terms on the right-hand side of the estimate in Lemma \ref{lem:rhs1}, which yield different orders in terms of $\alpha$. Interestingly the optimal choice $\eps^3 \approx \delta^2$ is unaffected by the specific source condition. 

\section{Numerical Solution}

For the numerical solution we discretize the saddle-point system \eqref{saddlepoint} with standard piecewise linear finite element methods on triangular grids not resolving the interface but adaptively refined based on the gradient of $\varphi^\eps$. Note that this is equivalent to the optimality system for a direct finite element discretization of the minimization problem for \eqref{eq:Tikhonov_diffuse}. In the following we discuss some further aspects arising in the solution of the linear system.

\subsection{Preconditioning of the Saddle-point System}
In order to solve the saddle-point system \eqref{saddlepoint} in reasonable time, we rely on efficient preconditioners. 
We concluded that all the constants in the stability estimates were 
independent of the parameter $\eps$, cf. Lemma \ref{lem:continuity} and Theorem \ref{uniqsoln}. Consequently, to obtain an
$\eps$-robust preconditioner becomes a matter of applying the proper Riesz maps, denoted by $R_{\mathcal{U}^\eps}: \mathcal{U}^\eps
\rightarrow (\mathcal{U}^\eps)'$ and $R_{\mathcal{H}_\diamond^\eps}: \mathcal{H}_\diamond^\eps \rightarrow (\mathcal{H}_\diamond^\eps)'$.
Furthermore, let us introduce the operators
\begin{eqnarray*}
 Q^\eps&:& \mathcal{U}^\eps \rightarrow (\mathcal{H}_\diamond^\eps)', 
 \quad u \mapsto -\langle u, w\rangle_{\Ue},\\ 
 P^\eps&:& \mathcal{H}_\diamond^\eps \rightarrow (\mathcal{H}_\diamond^\eps)',
 \quad v \mapsto \langle M \nabla v, \nabla w\rangle_{\omega^\eps},\\
 {T^\eps} &:& \mathcal{H}_\diamond^\eps \rightarrow (\mathcal{H}_\diamond^\eps)', 
 \quad v \mapsto \langle v, w\rangle_{\Me}, \\
 {\tilde{T}^\eps} &:& \mathcal{M}^\eps \rightarrow (\mathcal{H}_\diamond^\eps)',
 \quad f \mapsto \langle f, w \rangle_{\Me},
 \end{eqnarray*}
with $w\in \H^\eps_\diamond$. Using these operators, we can write \eqref{saddlepoint} in the form 
\begin{equation}\label{kkt}
\underbrace{\begin{bmatrix} \alpha R_{\mathcal{U}^\eps}&0&[Q^\eps]'\\ 0& T^\eps&[P^\eps]' \\ Q^\eps&P^\eps&0 
\end{bmatrix}}_{\widehat{\mathcal{A}}_{\alpha}^\eps}
\underbrace{\begin{bmatrix} u^\eps \\ v^\eps \\ p^\eps \end{bmatrix}}_{q^\eps} = 
\underbrace{\begin{bmatrix} 0 \\ \tilde{T}^\eps f \\ 0 \end{bmatrix}}_b
,\end{equation}
where we have 
\begin{equation}
 {\widehat{\mathcal{A}}}_{\alpha}^\eps: \mathcal{U}^\eps \times \mathcal{H}_\diamond^\eps
 \times \mathcal{H}_\diamond^\eps \rightarrow (\mathcal{U}^\eps)' \times (\mathcal{H}_\diamond^\eps)'
 \times (\mathcal{H}_\diamond^\eps)'.
\end{equation}
Since this operator $\widehat{\mathcal{A}}_\alpha^\eps$ maps from a (product) Hilbert space onto its dual space, 
Krylov subspace methods are not readily available. However, assuming that an operator $\mathcal{B}^\eps:
(\mathcal{U}^\eps)' \times (\mathcal{H}^\eps)' \times (\mathcal{H}^\eps)' \rightarrow
\mathcal{U}^\eps \times \mathcal{H}^\eps \times \mathcal{H}^\eps$ is available,
Krylov subspace methods can be employed to solve
\begin{equation}
 \mathcal{B}^\eps\widehat{\mathcal{A}}_\alpha^\eps q^\eps = \mathcal{B}^\eps b. \nonumber
\end{equation}
To obtain an efficient solution, the preconditioner $\mathcal{B}^\eps$ must be an isomorphism, see \cite{Mar11}. We
propose to apply inverse Riesz maps to derive such a preconditioner, which lead to the preconditioned system
\begin{equation}\label{prec_kkt} 
\underbrace{\begin{bmatrix} R_{\mathcal{U}_\beta^\eps}^{-1} & 0 & 0 \\ 
 0 & R_{\mathcal{H}_\diamond^\eps}^{-1} & 0 \\ 0 & 0 & R_{\mathcal{H}_\diamond^\eps}^{-1} 
\end{bmatrix}}_{\mathcal{B}^\eps}
\underbrace{\begin{bmatrix} \alpha R_{\mathcal{U}_\beta^\eps}&0&[Q^\eps]'\\ 0& T^\eps&[P^\eps]' \\ Q^\eps&P^\eps&0 
\end{bmatrix}}_{{\hat{\mathcal{A}}}_{\alpha}^\eps}
\underbrace{\begin{bmatrix} u^\eps \\ v^\eps \\ p^\eps \end{bmatrix}}_{q^\eps}   
= \begin{bmatrix} R_{\mathcal{U}_\beta^\eps}^{-1} & 0 & 0 \\ 
 0 & R_{\mathcal{H}_\diamond^\eps}^{-1} & 0 \\ 0 & 0 & R_{\mathcal{H}_\diamond^\eps}^{-1} 
\end{bmatrix}\underbrace{\begin{bmatrix} 0 \\ \tilde{T}^\eps f \\ 0 \end{bmatrix}}_b.
\end{equation}
We observe that
\begin{equation}\label{eq:ainv}
 \mathcal{A}_\alpha^\eps = \mathcal{B}^\eps\widehat{\mathcal{A}}_\alpha^\eps: \mathcal{U}^\eps \times \mathcal{H}_\diamond^\eps
 \times \mathcal{H}_\diamond^\eps \rightarrow \mathcal{U}^\eps \times \mathcal{H}_\diamond^\eps \times \mathcal{H}_\diamond^\eps,
\end{equation}
and consequently, since ${\mathcal{A}}_\alpha^\eps$ is a symmetric indefinite operator,
the MINRES algorithm can be applied to solve the optimality system.

\begin{rem}
 For our numerical examples we will use a norm induced by the inner product
 \begin{equation}
  \langle M\nabla v,\nabla v\rangle_{\omega^\eps}+\langle  v,v\rangle_{\omega^\eps}
 \end{equation}
 on $\mathcal{H}_\diamond^\eps$. This influences the preconditioner $\mathcal{B}^\eps$, 
 resulting in a slightly different stiffness matrix from the discretization of the Riesz map $R_{\mathcal{H}_\diamond^\eps}$. 
 From a numerical investigation, this gave better iteration counts, and we therefore apply this alternative norm in the numerical section.
\end{rem}

\subsection{Spectrum of the preconditioned system}\label{sec:spectrum}

Operators similar to $\mathcal{A}_\alpha^\eps$ were thoroughly analyzed in \cite{Nie13}. Under given assumptions, an efficient and robust solution of the 
saddle-point system \eqref{prec_kkt} can be guaranteed. More specifically, the authors of \cite{Nie13} show that for a sound discretization of $\mathcal{A}^\eps_\alpha$
defined in \eqref{prec_kkt}-\eqref{eq:ainv}, the spectrum of the associated discretized operator $\mathcal{A}_\alpha^{\eps,h}$ satisfied
%
\begin{equation}\label{spectrum}
\textnormal{sp}(\mathcal{A}_\alpha^{\eps,h}) \subset [-b,-a] \cup [c\alpha,2\alpha] \cup 
\{ \tau_1, \tau_2, ..., \tau_{N(\alpha)\}} \cup [a,b],  
\end{equation}
where $N(\alpha) = O(\ln(\alpha^{-1}))$ and the constants $a,\,b,\,c$ are independent of $\alpha$ (and here also of $\eps$).

To guarantee this spectrum, the following assumptions must be satisfied:
\begin{list}{}{}
\item $\mathcal{A}\mathbf{1:} \ P^\eps: \mathcal{H}_\diamond^\eps \rightarrow (\mathcal{H}_\diamond^\eps)'$ is 
      bounded, linear, and invertible.
\item $\mathcal{A}\mathbf{2:} \ Q^\eps: \mathcal{U}_\beta^\eps \rightarrow (\mathcal{H}_\diamond^\eps)'$ 
      is bounded and linear. 
\item $\mathcal{A}\mathbf{3:} \ T^\eps: \mathcal{H}_\diamond^\eps \rightarrow (\mathcal{H}_\diamond^\eps)'$ is bounded and linear. 
\item $\mathcal{A}\mathbf{4:}$ The operator equation \eqref{eq:diffuseIP} is ill-posed. 
\end{list}

Assumptions $\mathcal{A}\mathbf{1}$-$\mathcal{A}\mathbf{4}$ follow immediately from the analysis in
Section~\ref{sec:analysis}.

\subsection{Implementation}

We implemented the code using cbc.block, which is a FEniCS-based Python
implemented library for block operators. See \cite{Simula.simula.1112} for details.
The PyTrilinos package was used to compute an approximation of the preconditioner $\mathcal{B}^\eps$ in \eqref{prec_kkt}. 
We approximated $\mathcal{B}^\eps$ using AMG with a symmetric Gau\ss-Seidel smoother with three smoothing sweeps. 
All tables containing iteration counts for the MINRES method were generated with this approximate preconditioner.  
On the other hand, the eigenvalues of $\mathcal{A}_\alpha^\eps = \mathcal{B}^\eps\widehat{\mathcal{A}}_\alpha^\eps$
were computed with the \textit{exact} preconditioner $\mathcal{B}^\eps$ in Octave. 
The MINRES iteration process was stopped as soon as   
  \begin{equation}\label{eq:stop} \frac{\|r_n\|}{\|r_0\|} =
   \frac{\|\mathcal{B}^\eps [\hat{\mathcal{A}}^\eps_\alpha
       q_n - b]\|_{\mathcal{U}^\eps \times \mathcal{H}^\eps \times \mathcal{H}^\eps} }  
       {\|\mathcal{B}^\eps[\hat{\mathcal{A}}^\eps_\alpha q_0 - b]\|_{\mathcal{U}^\eps \times \mathcal{H}^\eps \times \mathcal{H}^\eps}}
     < \rho.  
    \end{equation} 
   Here, $\rho$ is a small positive parameter.  
The exact data $u^\dagger$ was computed from an appropriate source condition, 
i.e. $F^*w = u^\dagger$, for some $w \in L^2(\partial B)$. Then, we computed $Fu^\dagger= f^\dagger$. Noise was then added to $f^\dagger$, 
and the noisy data was extended to ${\rm supp}(\gamma_B |\nabla\omega^\eps|)$ by the extension operator $E_B$, see Section~\ref{sec:extensions}.

\subsection{Examples}

In our simulations, we use a ``circle in circle'' domain. The domain $D$ is defined as
\begin{equation}
 D = \{(x,y) \in \mathbb{R}^2: 0.3 < \sqrt{x^2 + y^2} < 1\}. \nonumber 
\end{equation}
The diffuse domain $D_\eps$ is then simply the scaling
\begin{equation}
 D_\eps = \{(x,y) \in \mathbb{R}^2: 0.3 - \eps < \sqrt{x^2 + y^2} < 1 + \eps\}. \nonumber
\end{equation}
Furthermore, the conductivity tensor $M(x,y)$ is defined as
\begin{equation*}
 M = \bar L \Sigma \bar L^\top,
\end{equation*}
where
\begin{equation*}
 \bar L = \frac{1}{\|(x,y)\|}\begin{bmatrix} y & x \\ -x & y \end{bmatrix}, \qquad
 \Sigma = \begin{bmatrix} 1 & 0 \\ 0 & 0.3 \end{bmatrix}.
\end{equation*}
One easily verifies that \eqref{eq:M_scalar} holds for this choice of $M$.
In Table \ref{tab:conv_criterion}, we see the iteration numbers for different values of $\alpha$ and $\eps$. 
As expected, there is no dependency on the diffuse domain parameter $\eps$, cf. Section \ref{sec:spectrum}. Furthermore, for the regularization parameter $\alpha$, 
we get the expected logarithmic growth in iteration numbers when $\alpha \rightarrow 0$. For example, when $\eps = 2^{-6}$, 
the growth is well modeled by the function
\begin{equation}
 \alpha\mapsto 55 - 24\log_{10}(\alpha). \nonumber
\end{equation}

\begin{table}[ht] \centering
    \begin{tabular}{ | l || l | l | l | l | l |}
      \hline
      $\eps$ \textbackslash $\alpha$ & 1 & .1 & .01 & .001 & .0001 \\ \hline \hline
      $2^{-2}$ & 57 & 100 & 143 & 186 & 238   \\ \hline
      $2^{-3}$ & 57 & 91  & 126 & 157 & 195  \\ \hline
      $2^{-4}$ & 64 & 102 & 126 & 144 & 183   \\ \hline
      $2^{-5}$ & 57 & 83  & 115 & 143 & 159  \\ \hline
      $2^{-6}$ & 55 & 79  & 105 & 123 & 155  \\ \hline
    \end{tabular}
    \caption{The number of MINRES iteration required to solve the discretized system associated with 
    \eqref{prec_kkt}. The stopping criterion $\rho = 10^{-10}$, see \eqref{eq:stop}.}\label{tab:conv_criterion}
\end{table}

Figure \ref{fig:eig_ex} shows the eigenvalues of $\mathcal{A}_\alpha$. The band structure is in accordance with the 
analysis in \cite{Nie13}, with three bands of eigenvalues, and a limited number of isolated eigenvalues.

\begin{figure}[ht]{}
    \centering
    \includegraphics[scale=0.5]{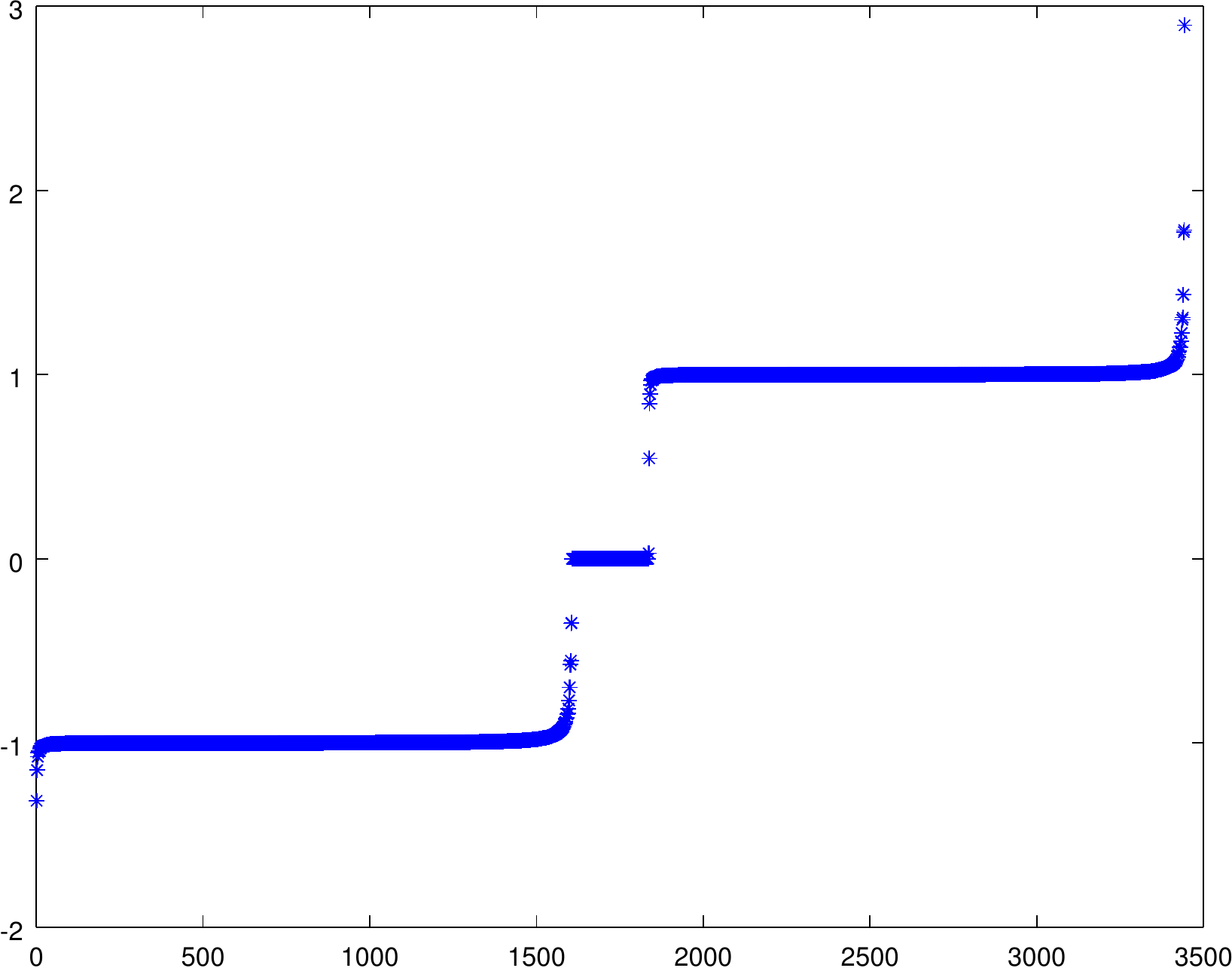}
    \caption{Plot of the eigenvalues associated with $\mathcal{A}_\alpha^\eps$ in Example 1. 
    Here $\alpha = 10^{-4}$ and $\eps = 0.125$. The eigenvalues are computed on a course mesh with 1\,605 vertices.} \label{fig:eig_ex}
\end{figure}

We recall Assumption $\mathcal{A}\mathbf{4}$, i.e. that the operator equation \eqref{eq:diffuseIP} is ill-posed. In Figure \ref{fig:eig_vals1}, 
logarithmic plots of the absolute values of the eigenvalues of $\mathcal{A}^\epsilon_0$ are displayed. The clustering of eigenvalues
around 0 is an effect of the ill-posed nature of \eqref{eq:diffuseIP}.
\begin{figure}
  \begin{subfigure}[ht]{0.45\textwidth}
    \centering
    \includegraphics[scale=0.5]{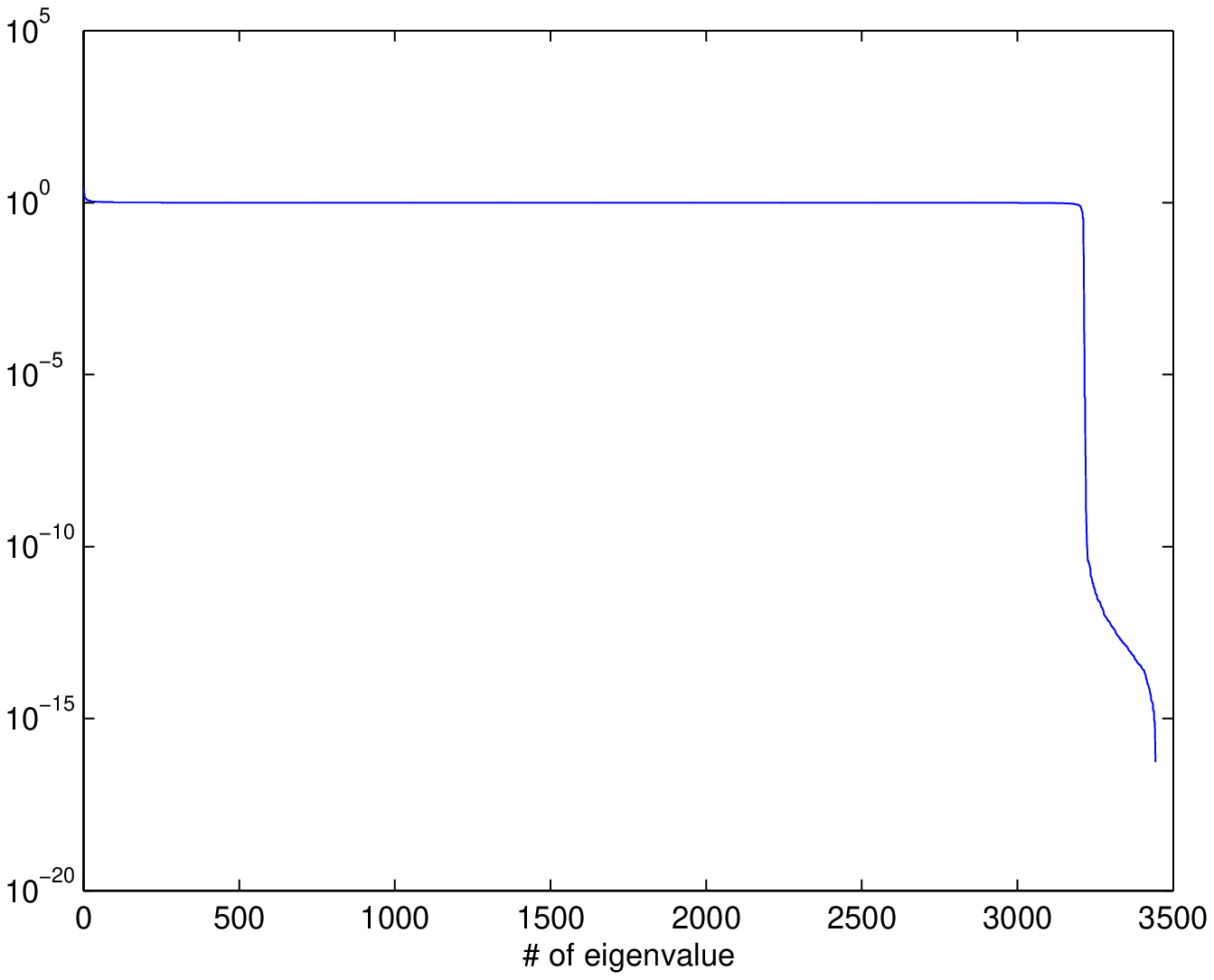}
    \caption{All eigenvalues.}
  \end{subfigure}
  \begin{subfigure}[ht]{0.45\textwidth}
    \centering
    \includegraphics[scale=0.5]{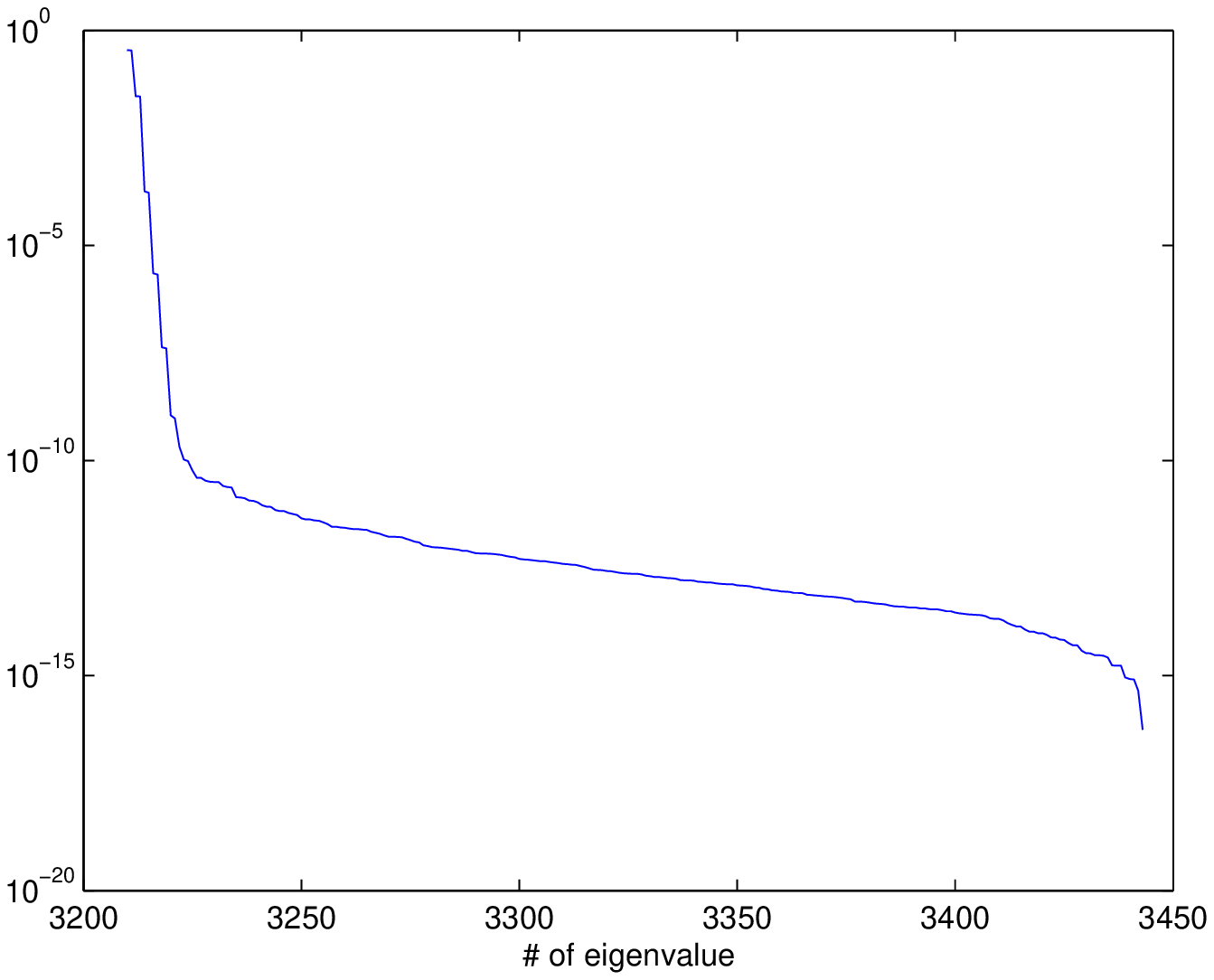}
    \caption{Zoomed in on the smallest eigenvalues.}
  \end{subfigure}
  \caption{Logarithmic plots of the absolute values of the eigenvalues of $\mathcal{A}_0^\eps$.}\label{fig:eig_vals1}
\end{figure}

From a practical point of view, we are concerned with the performance of the diffuse domain method in comparison to the 
standard inverse formulation, i.e. with the optimization performed on the exact domain. We will compare the solutions both visually and in norm sense. 

In Figure \ref{fig:compCtrl}, the exact source function is displayed along with inverse solutions on both the exact and diffuse mesh.
Similar comparisons are displayed in Figures \ref{fig:compState} and \ref{fig:compAdj} for the state and adjoint functions, respectively.
The functions defined on a surface, i.e. either on $\partial H$ or $\partial B$, are extended by the appropriate constant extension operator, see Section \ref{sec:extensions}.

For the control functions, the inverse solution $u_{\alpha,\delta}$ displayed in Figure \ref{fig:compCtrl}b) is visually identical to the exact source function $u^\dagger$.
These are also visually identical to $u_{\alpha,\delta}^\eps$ displayed in Figure \ref{fig:compCtrl}c), where $\eps = 0.03125 = \sqrt{\delta}$. With a larger
choice of $\eps$, however, the solution is quite different from the source $u^\dagger$, see Figure \ref{fig:compCtrl}d) where $\eps = 1/4$.
\begin{figure}
\centerline{%
\begin{tabular}{c@{\hspace{1pc}}c}
\includegraphics[scale=.15]{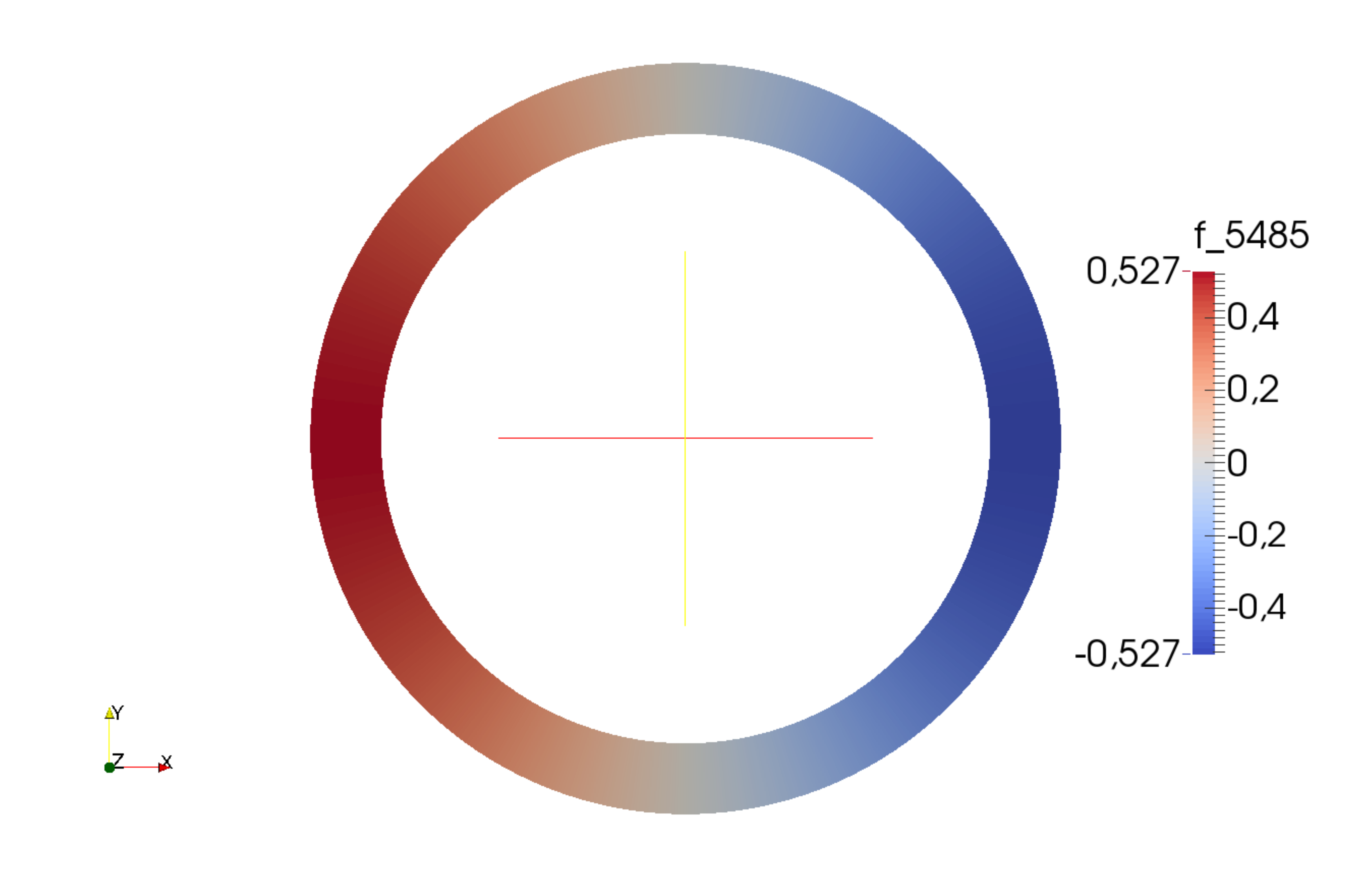} &
\includegraphics[scale=.15]{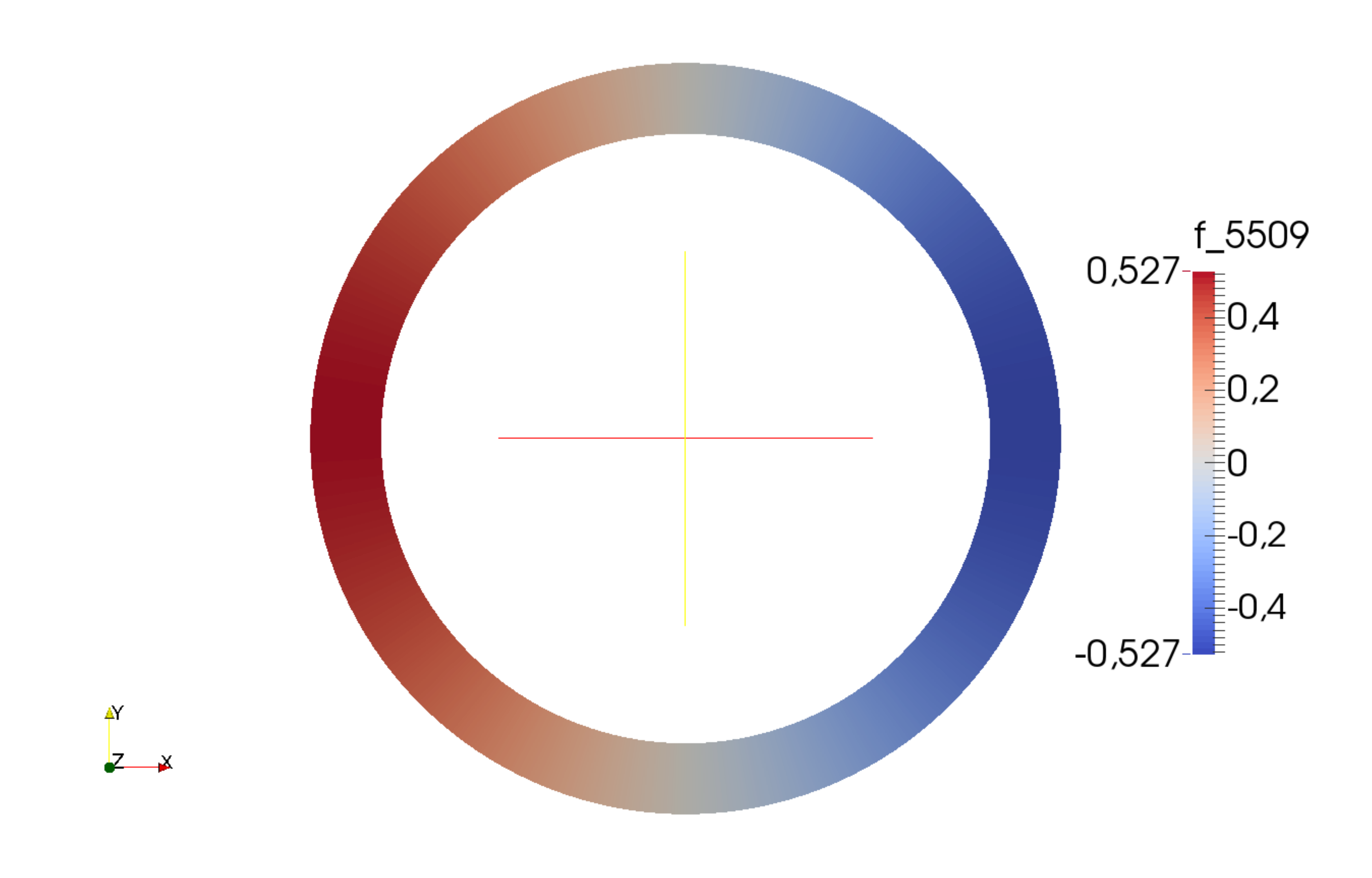} \\
(a)~~ The input source $\tilde u^\dagger$. & (b)~~ Inverse solution $\tilde u_{\alpha,\delta}$ on the exact mesh $D$. \\
\includegraphics[scale=.15]{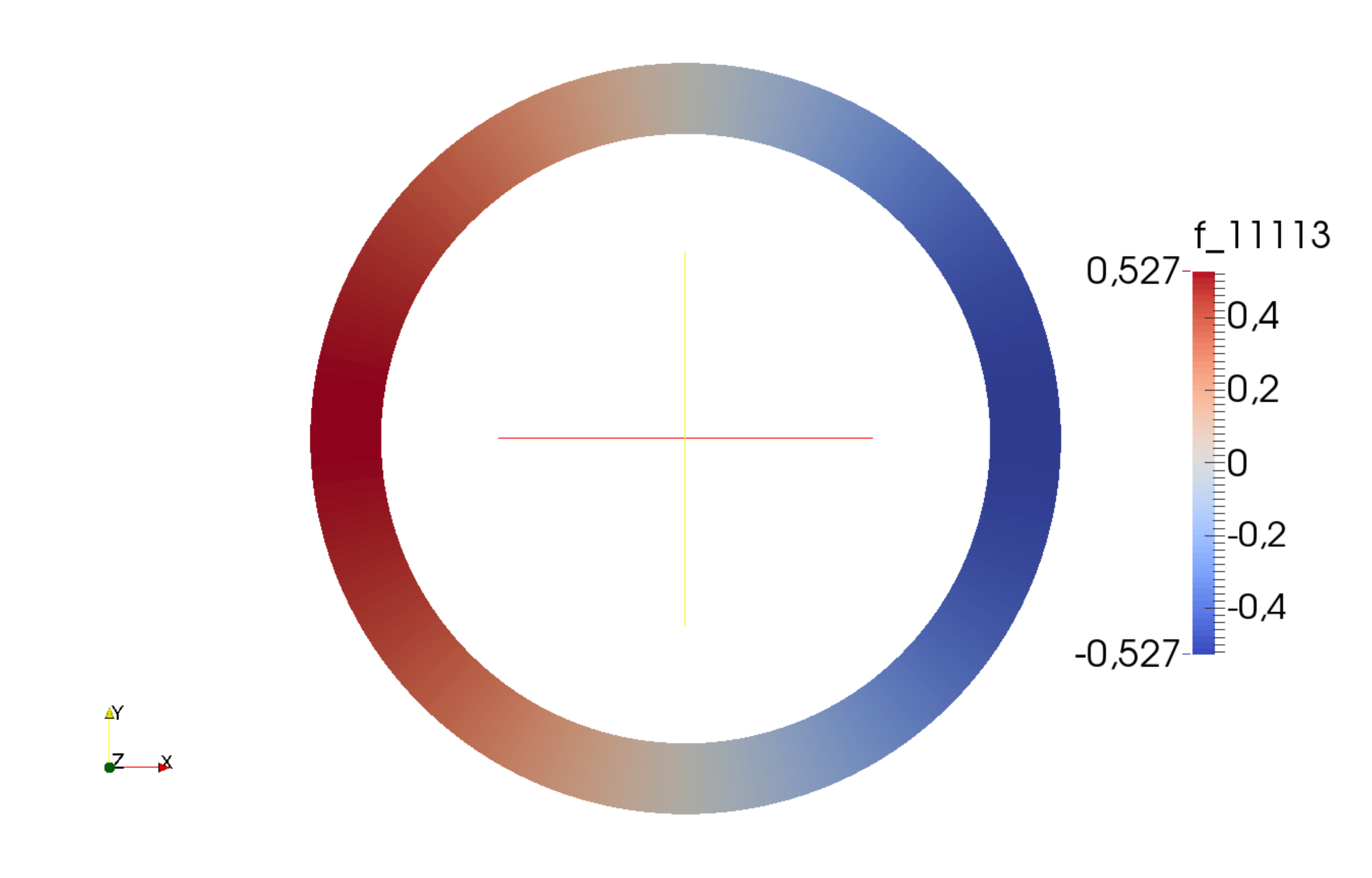} &
\includegraphics[scale=.15]{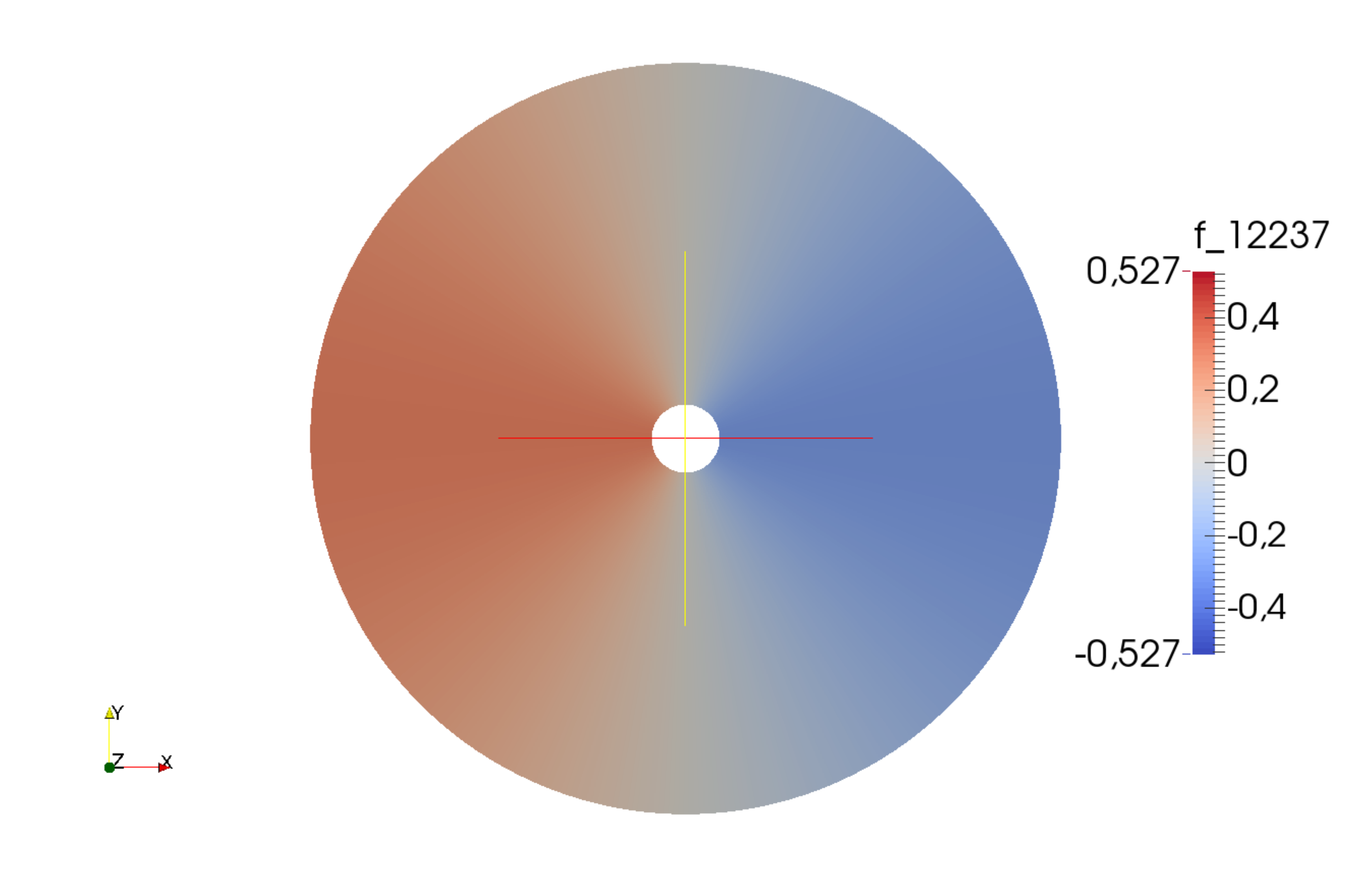} \\
(c)~~ Diffuse solution $u_{\alpha,\delta}^\eps$ for $\eps = 0.03125 = \delta^{1/2}$. & (d)~~ Diffuse solution $u_{\alpha,\delta}^\eps$ for $\eps = 1/4$. 
\end{tabular}}
\caption{A comparison of different control functions and the input source in a). In a) and b), the control is only defined on $\partial H$, so we therefore applied the constant 
extension $E_H$ for the visualization, see Section \ref{sec:extensions}. 
In b), c) and d), $\delta =  2^{-10}$ and $\alpha = \delta/2$.}\label{fig:compCtrl}
\end{figure}
If we consider the state functions, the choice of $\eps$ is less important. All solutions displayed in Figure \ref{fig:compState} 
are basically identical from a visual perspective.
For the adjoint functions, there seem to be some visual difference between $p_{\alpha,\delta}$ and $p_{\alpha,\delta}^\eps$,
i.e. for the adjoint on the exact mesh and on the diffuse mesh for $\eps = 0.03125$, but the order of magnitude of these functions
is only $10^{-3}$.

\begin{figure}
\centerline{%
\begin{tabular}{c@{\hspace{1pc}}c}
\includegraphics[scale=.15]{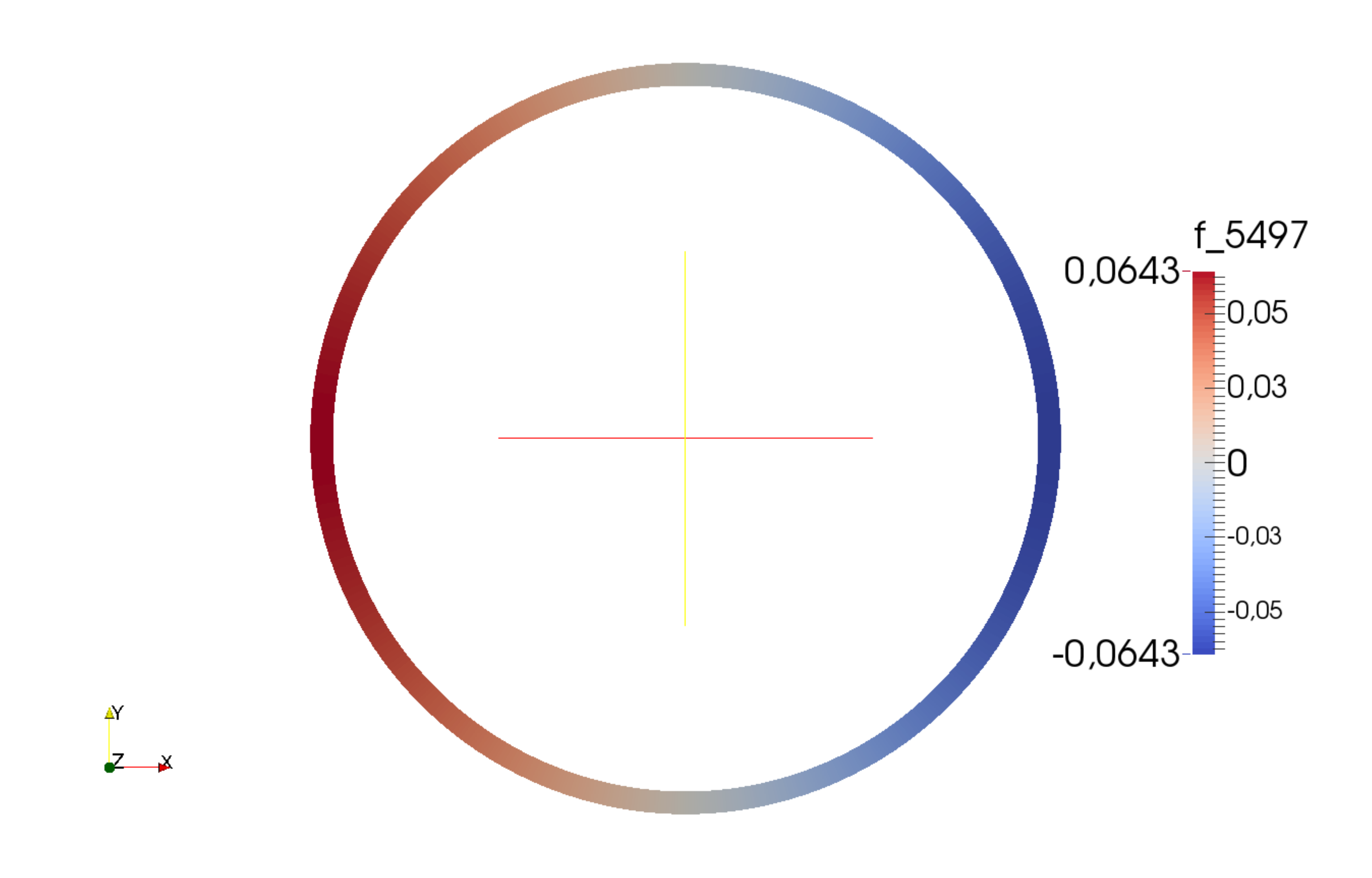} &
\includegraphics[scale=.15]{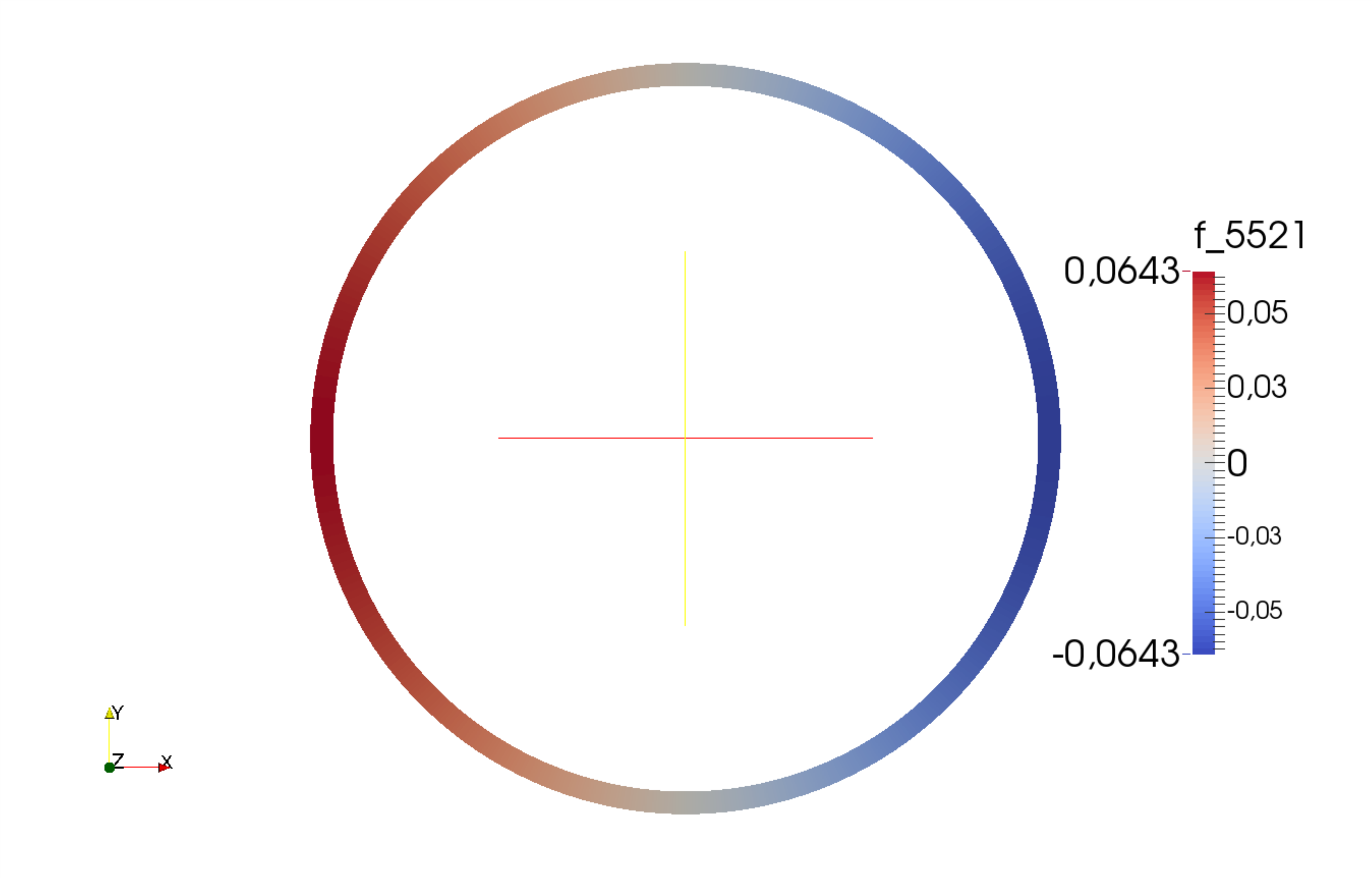} \\
(a)~~ The exact data $\tilde f^\dagger$. & (b)~~ The state $\tilde f_{\alpha,\delta}= E_B(v_{\alpha,\delta|\partial B})$ on the exact mesh $D$. \\
\includegraphics[scale=.15]{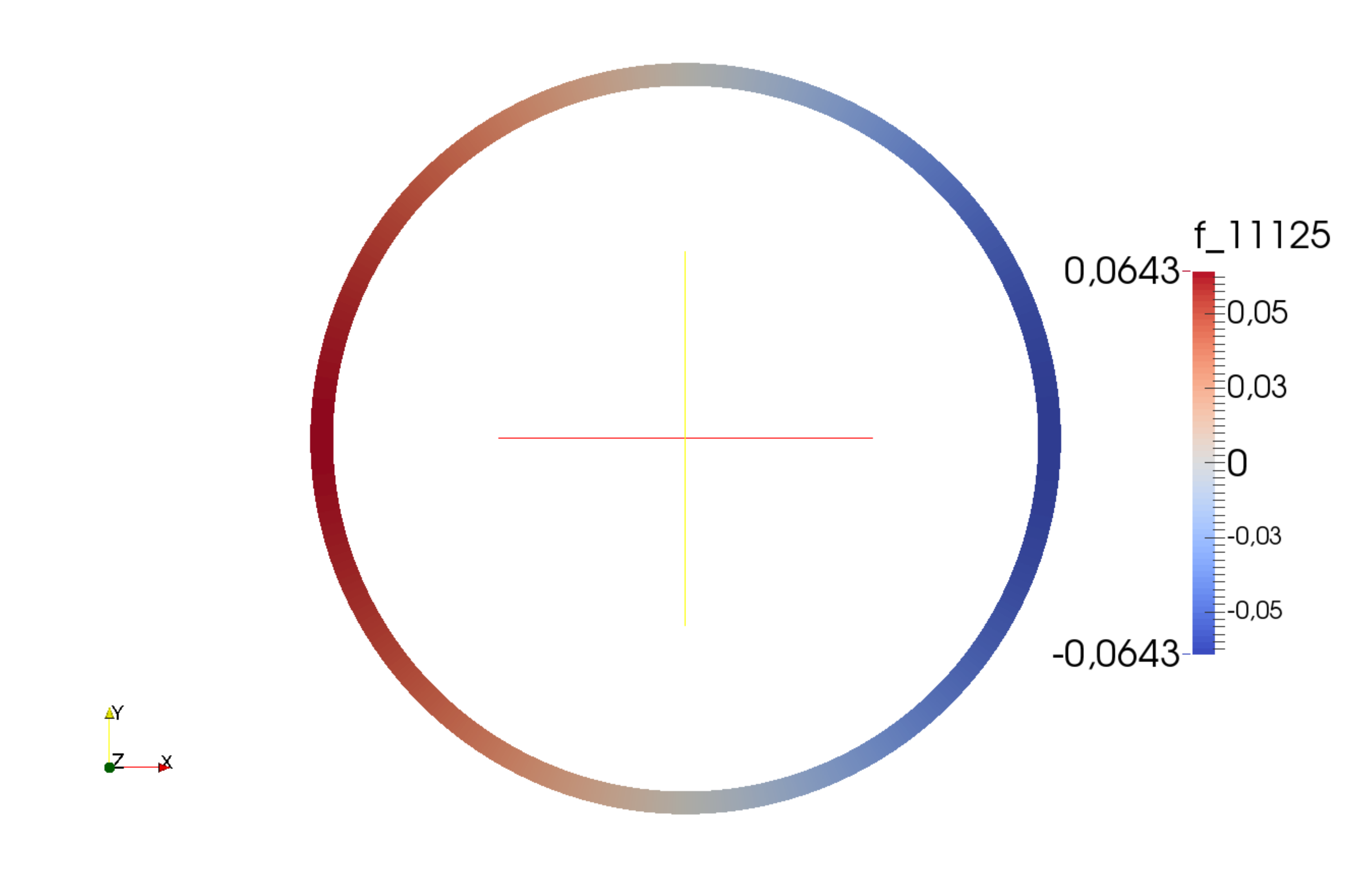} &
\includegraphics[scale=.15]{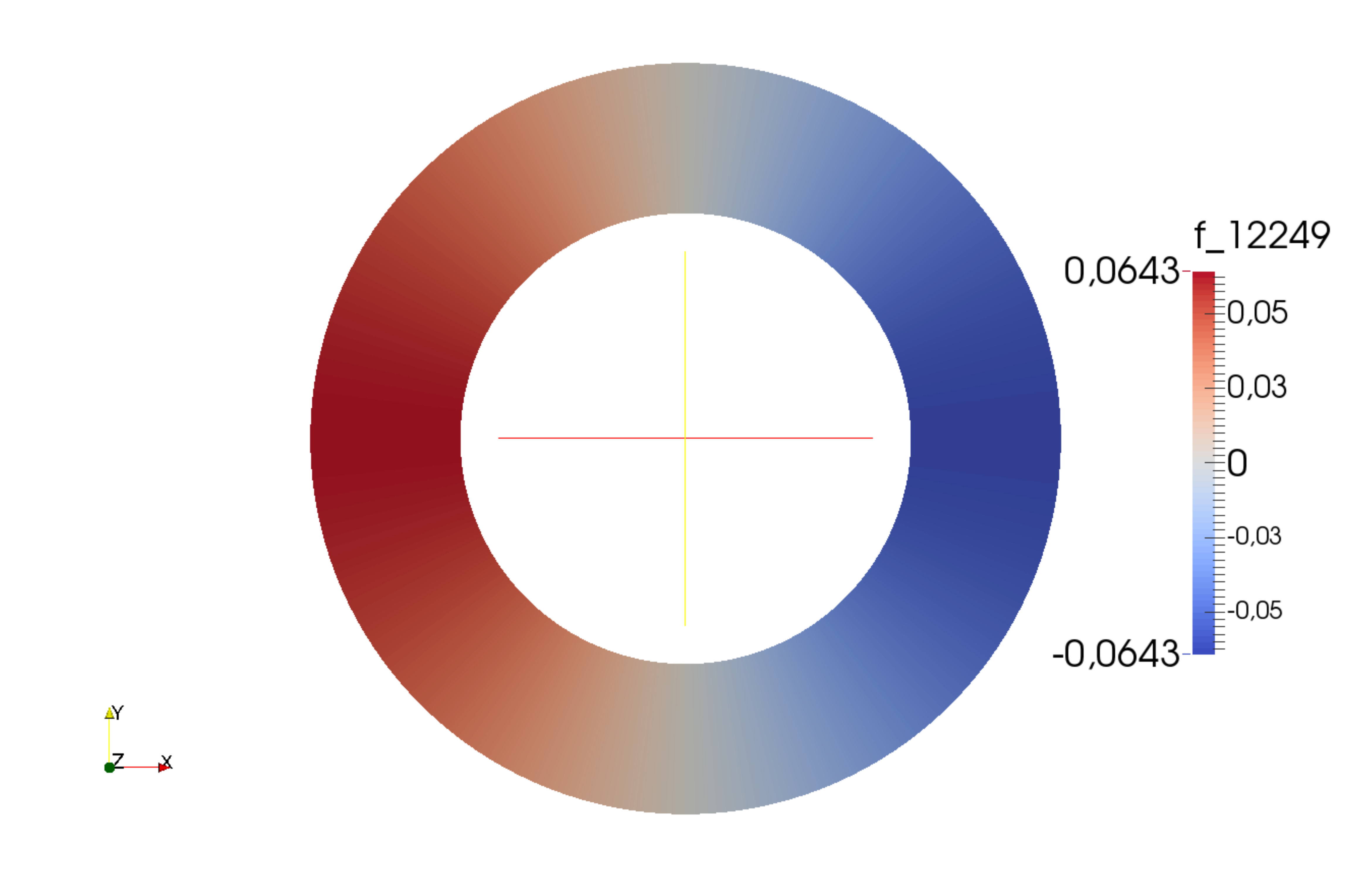} \\
(c)~~ Diffuse state $v_{\alpha,\delta}^\eps$ for $\eps = 0.03125 = \delta^{1/2}$. & (d)~~ Diffuse state $f_{\alpha,\delta}^\eps$ for $\eps = .25$. 
\end{tabular}}
\caption{A comparison of different state functions and the exact data in a). In a) and b), the state is only defined on $\partial B$, so we therefore applied the constant 
extension $E_B$ for the visualization, see Section \ref{sec:extensions}. 
In b), c) and d), $\delta =  2^{-10}$ and $\alpha = \delta/2$}\label{fig:compState}
\end{figure}

\begin{figure}
\centerline{%
\begin{tabular}{c@{\hspace{1pc}}c}
\includegraphics[scale=.15]{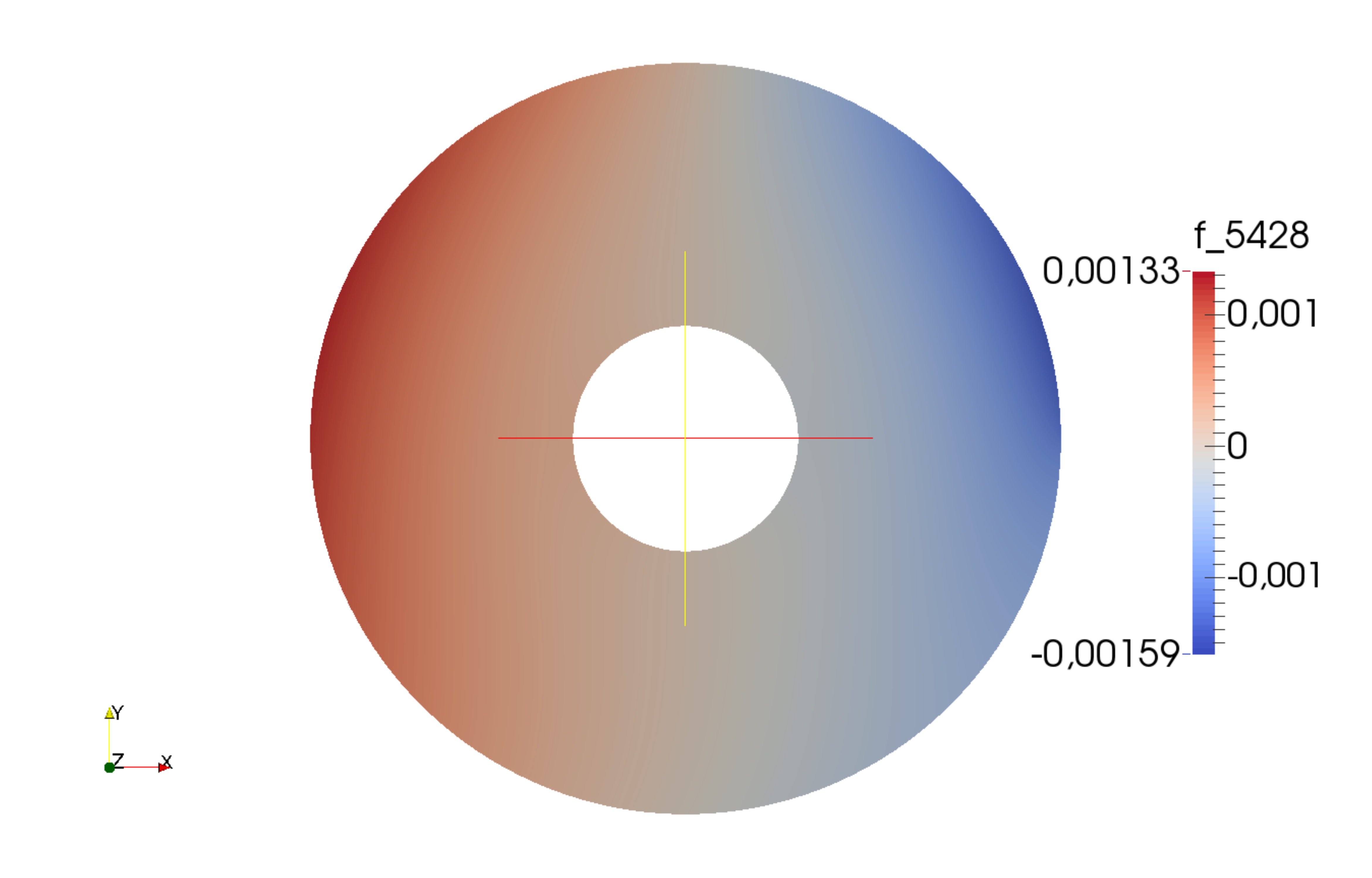} &
\includegraphics[scale=.15]{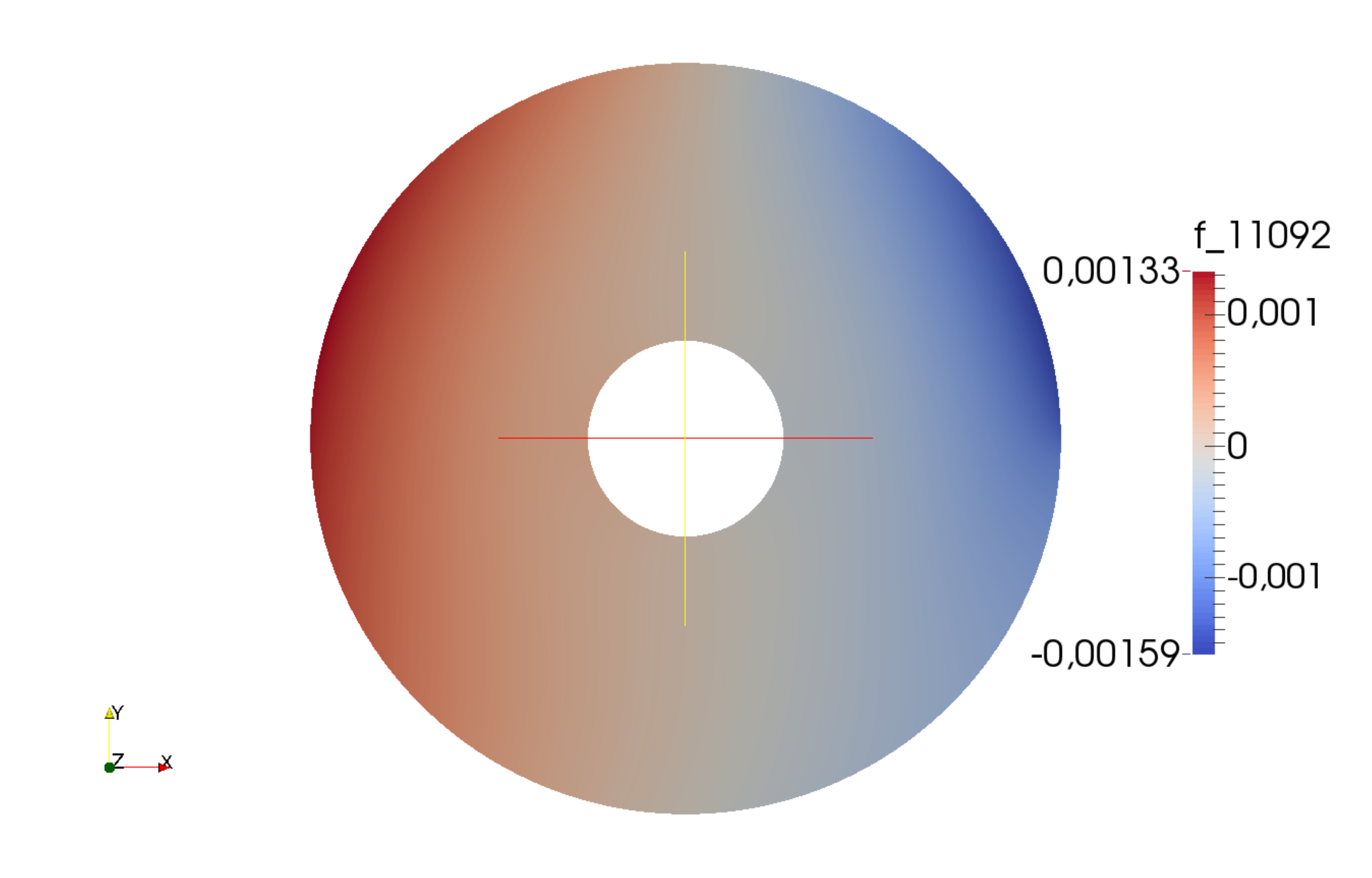} \\
(a)~~ The adjoint $p_{\alpha,\delta}$ on the exact mesh $D$. & (b)~~ The diffuse adjoint $p^\delta_{\alpha,\eps}$ for $\eps = .03125 = \delta^{1/2}$. 
\end{tabular}}
\caption{A comparison of the adjoint on the exact mesh and the diffuse mesh.
Here, $\delta =  2^{-10}$ and $\alpha = \delta/2.$}\label{fig:compAdj}
\end{figure}

The final issue we will investigate numerically is the convergence rates of
\begin{equation}
 \|u_{\alpha,\delta}^\eps - \tilde u^\dagger\|_{\mathcal{U}^\eps}, \nonumber
\end{equation}
for choices of $\alpha = C\delta^\mu$ and $\eps = c\delta^\nu$. 
In Figure \ref{fig:logplotHalf} we see convergence rates for the choice $\alpha = \delta/2$. In a), the convergence rate 
on the exact mesh is displayed. The rate seems, on average, to be of order $O(\delta^{1/2})$, but it is quite inconsistent
from step to step. 
This leads us to believe that a stronger source condition holds true and better convergence rates may be obtained, see Section~\ref{sec:rates}.
If the smoother source condition is satisfied, we can choose $\alpha = C\delta^{2/3}$. The convergence rates for this choice of 
$\alpha$ is displayed in Figure \ref{fig:logplotTThird}. In a), we now see a much more consistent convergence rate of order $O(\delta^{2/3})$. 

For the convergence rates associated with the diffuse domain method, we have more inconsistent rates. Generally, the convergence
rates can only be guaranteed for small choices of $\delta$ and $\eps$, and particularly the latter is difficult to handle numerically,
due to mesh limitations on standard computers. However, we see in Figure \ref{fig:logplotHalf}b) that the choices $\eps = \delta^{1/2}/4$
and $\eps = \delta^{2/3}/4$ yield roughly the same convergence rate, while $\eps = \delta^{1/3}/4$ yields a worse rate. 

For the case $\alpha = C\delta^{2/3}$, displayed in Figure \ref{fig:logplotTThird}, the numerics become more challenging. 
We observe from the rates associated with the inverse solutions on the exact mesh that we only obtain the theoretical convergence 
$\|u_{\alpha,\delta} - u^\dagger\|_{L^2(\partial H)} = O(\delta^{2/3})$ for small values of $\delta$. Hence, choosing $\eps = \delta^\nu$
might be numerically challenging for these values of $\delta$. 
However, the constant in Theorem \ref{thm:error} is not explicit, 
and we therefore select heuristically $C$ in $\eps=C\delta^{\nu}$. 
From Figure \ref{fig:logplotTThird}b), we observe that the choice $\eps = 35\delta^{2/3}$ yields a better rate than choosing $\eps = 10\delta^{1/2}$, which again yields a better rate than 
$\eps = 2.8\delta^{1/3}$. Furthermore, for the smallest noise values, the convergence rate associated with the choices
$\eps = 35\delta^{2/3}$ and $\eps = 10\delta^{1/2}$ actually seems to be of order $O(\delta^{2/3})$, which is the optimal
rate from standard theory, see \cite{EHN96}. The choice $\eps = C\delta^{1/2}$ is better than our theory suggests.
Roughly, this may be explained as follows. Measuring in a norm similar to a weighted $W^{1,1}$-norm gives approximations of order $\eps^2$ instead of $\eps^{3/2}$, see \cite{BES2014} and Theorem~\ref{thm:estimate}. Using this in Theorem~\ref{thm:error}, the optimal choice in Corollary~\ref{cor:convergence_rates} is actually $\eps\approx \delta^{1/2}$. As for coarse discretizations all norms are equivalent with moderate constants this may explain the observed behavior.

\begin{figure}
\centerline{%
\begin{tabular}{c@{\hspace{1pc}}c}
\includegraphics[scale=.4]{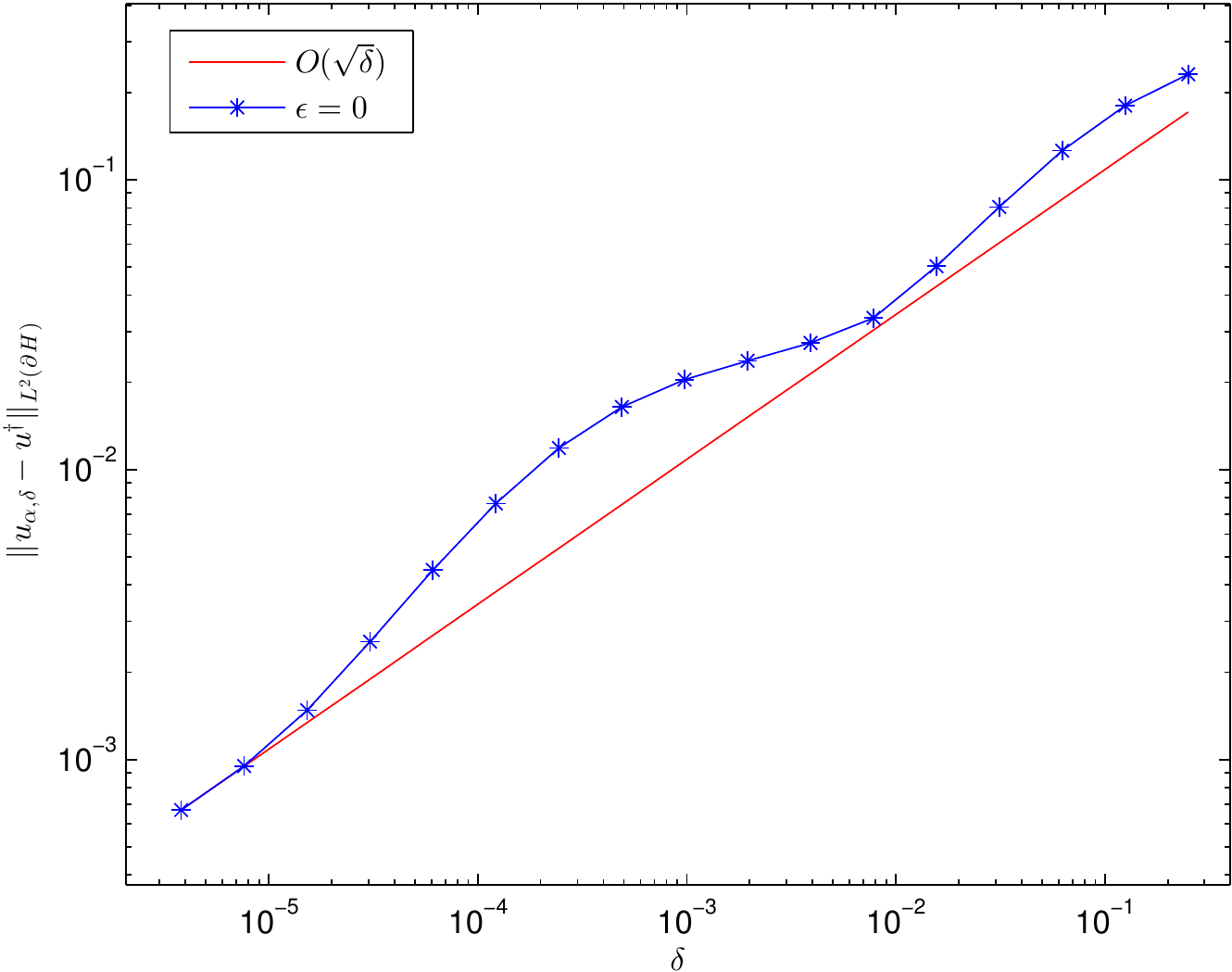} & \includegraphics[scale=.4]{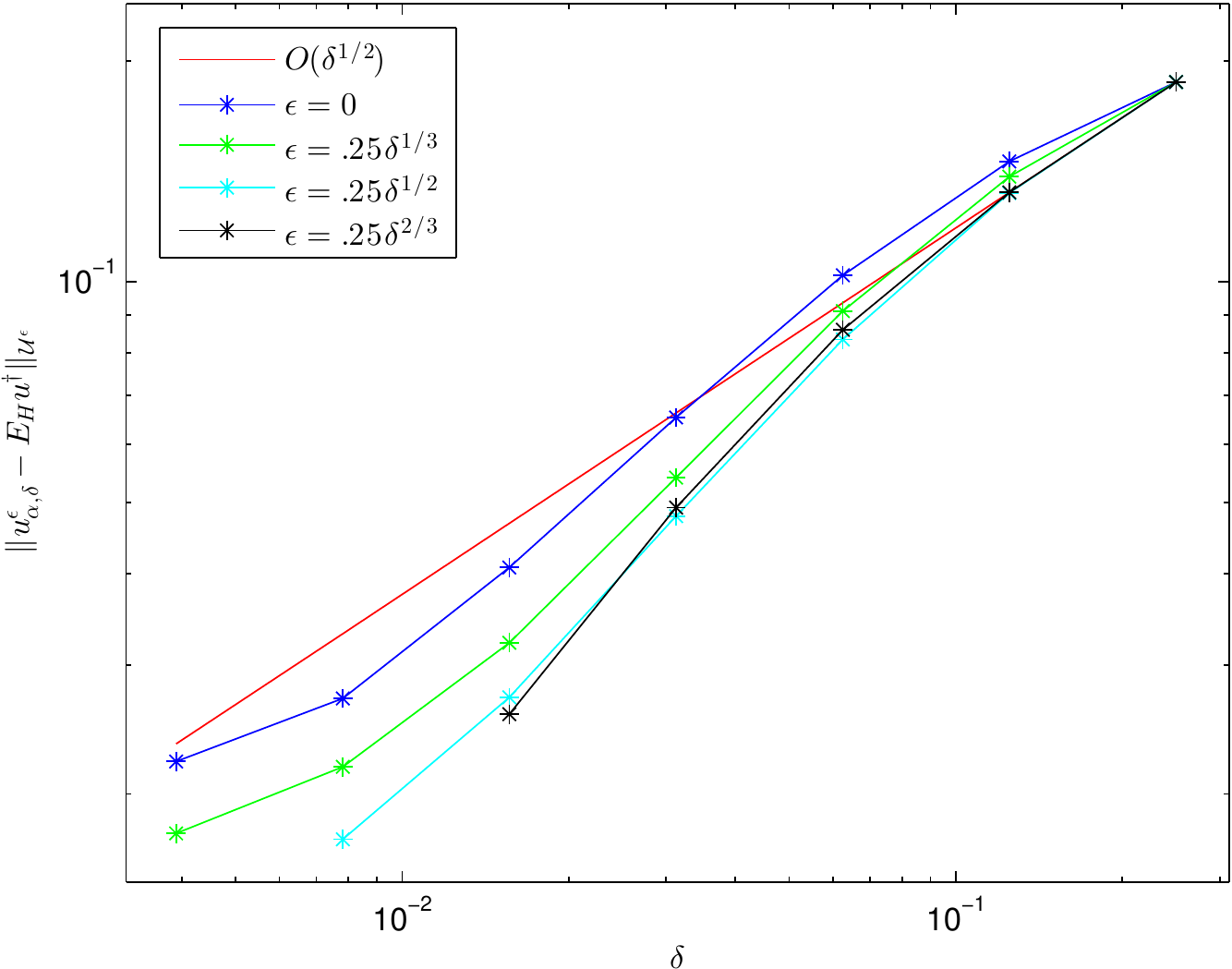} \\
(a)~~ Convergence rate for the control 
&
(b)~~ Convergence rate for the control \\ on the exact mesh $D$.   &on  the diffuse mesh $D_\eps$, with $\eps = .25\delta^\nu$,\\& where $\nu = \{1/3, 1/2, 2/3\}$. 
\end{tabular}}
\caption{A log-log plot of the convergence rates for different choices of diffuse domain parameter $\eps$. 
In both subplots we see the actual convergence rates (experimental), compared to the theoretical rate of order $O(\eps^{1/2})$.
Here, $\alpha = \delta/2$.}\label{fig:logplotHalf}
\end{figure}

\begin{figure}
\begin{subfigure}{0.4\textwidth}
\includegraphics[scale=.4]{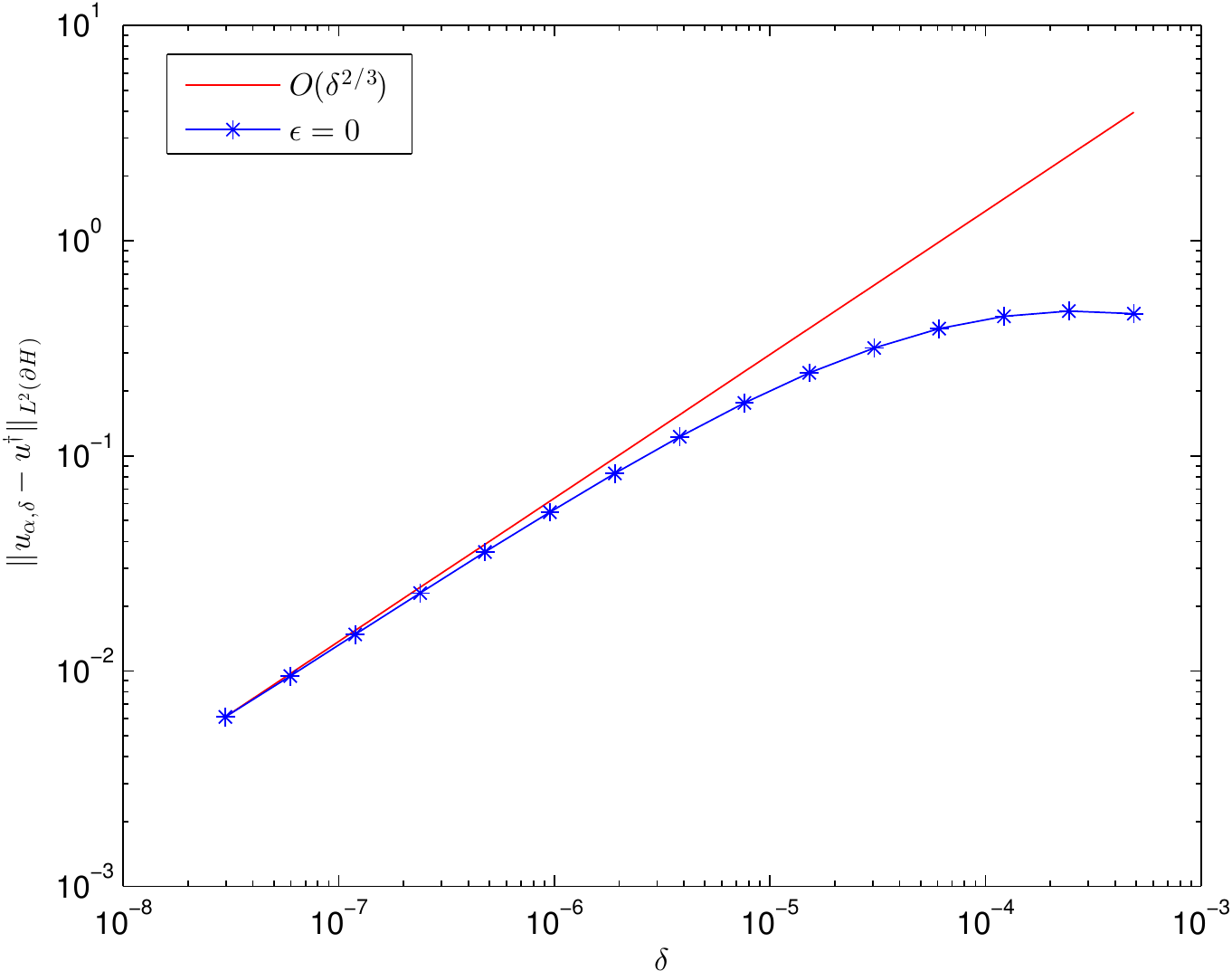}
\caption{Convergence rate for the control on the exact mesh $D$ for $\alpha=\delta^{2/3}$.}
\end{subfigure}\quad
\begin{subfigure}{0.4\textwidth}
\includegraphics[scale=.4]{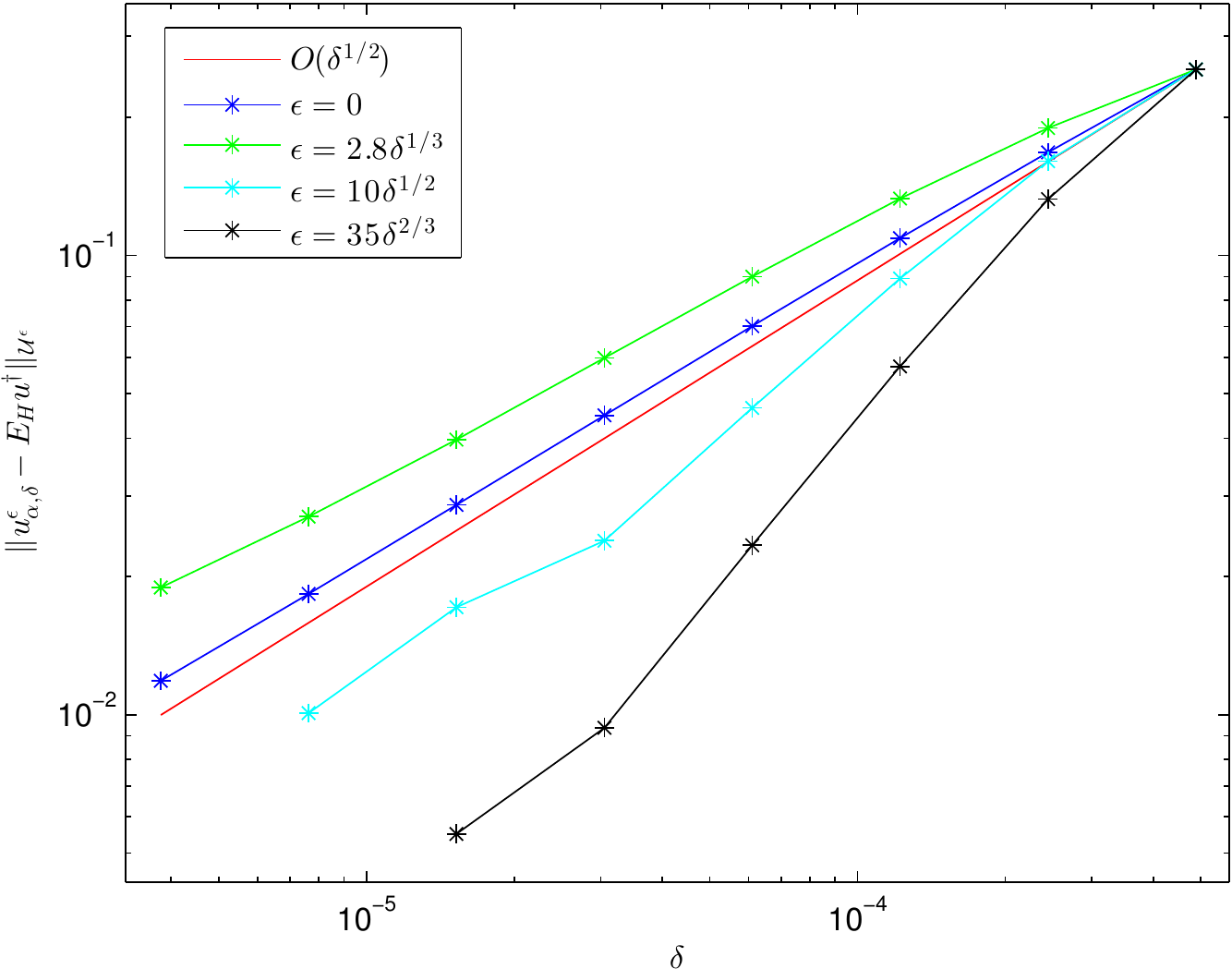}
\caption{Convergence rate for the control on the diffuse mesh $D_\eps$, with $\eps = C\delta^\nu$ for $\alpha=2\delta^{2/3}$.}
\end{subfigure}
\caption{A log-log plot of the convergence rates for different choices of diffuse domain parameter $\eps$.  
In both subplots we see the actual convergence rates (experimental), compared to the theoretical rate of order $O(\delta^{2/3})$.
All errors in (B) are scaled to be equal at the largest noise value. The notation $\eps = 0$ means computations on the exact mesh, i.e. as in (A).}\label{fig:logplotTThird}
\end{figure}

\section{Discussion and conclusions}
We applied a diffuse domain method to variational regularization methods.
This allowed us to handle complex geometries in a computationally efficient way. The additional error introduced by the diffuse domain method can be made arbitrarily small such that the overall error in the method is dominated by modeling errors and measurement noise. 
As a model problem we chose ECG inversion for which we could show that Tikhonov regularization is indeed a regularization method.
Extensions to other inverse problems governed by an elliptic partial differential equation of second order seem to be straightforward.
The main difference to standard Tikhonov regularization in Hilbert spaces, where simple operator perturbations can be handled in a straightforward manner, is the choice of topology which depends on $\eps$, the parameter in the diffuse domain method.
As this topology is weaker than the standard Hilbert space norm, we could show convergence in a dual norm only. 
A key ingredient for our convergence result is the reformulation of Tikhonov regularization as a constraint optimization problem, which gives additional control over the state, which in turn gave some compactness. Under the usual source conditions we could prove convergence rates in the stronger standard Hilbert space norm when an a priori parameter choice rule is used.
Using the methods present here, it should be possible to analyze also other parameter choice rules, and the use of nonlinear forward problems should also be feasible.
Extending the results of \cite{BES2014} to parabolic problems, one can also deal with time-dependent inverse problems. Here, the diffuse domain method is particularly suited when dealing with time-dependent geometries as e.g. a beating heart.
Another interesting point, which is not in the scope of this paper, is how errors in the distance function will influence the diffuse domain method.
On the continuous level noisy distance functions will lead to rough surfaces and new challenges come up.
Of particular interest is the case when only finitely many measurements, and hence measurement locations, are available, which makes it necessary to construct a distance function in a way that the noise is not dominant.

\section*{Acknowledgements}

MB and MS acknowledge support by ERC via Grant EU FP 7 - ERC Consolidator Grant 615216 LifeInverse. MB acknowledges support by the German Science Foundation DFG via  EXC 1003 Cells in Motion Cluster of Excellence, M\"unster, Germany.
OLE acknowledges support by DAAD for his one year research stay at WWU M\"unster. 

\bibliographystyle{abbrv}
\bibliography{biblio}

\begin{appendix}
\section{Basic Properties of Diffuse Approximations}
In this appendix we collect and extend some results of \cite{BES2014}.
We let $E$ be one of the extensions $E_B$ or $E_H$ defined in Section~\ref{sec:extensions} and $\gamma$ be one of the weighting functions $\gamma_B$ or $\gamma_H$, and assume that $\eps_0$ is sufficiently small. Moreover let $\Gamma=\partial D\cap {\rm supp}(\gamma)$. The constants $C$ are independent of $\eps$.
For $t\in (-\eps,\eps)$, we define the mapping $\Phi_t(x)=x+tn(x)$, $x\in\partial D$, and note that $\Phi_t(\partial D)=\{x\in\Omega: d_D(x)=t\}$. Moreover, cf. \cite[Eq. (9)]{BES2014},
\begin{align}\label{eq:det_to_one}
 \lim_{t \to 0}\sup_{x\in\Gamma} |\det D \Phi_t(x)-1-t \Delta d_D(x)| =0.
\end{align}
For any integrable $v$ the transformation formula implies
\begin{align}\label{eq:fubini}
   \int_{\Omega} v(x) |\nabla\omega^\eps| \gamma \d x = \frac{1}{2\eps}\int_{\Gamma}\int_{-\eps}^\eps v(x+tn(x)) |\det D\Phi_t(x)| \d t\d\sigma(x).
\end{align}

Let us begin with deriving some basic properties of the extensions constant off the interface defined in Section~\ref{sec:extensions}. 
\begin{lemma}\label{lem:extension}
There exists constant $c(\eps),C(\eps)>0$ such that for any $v\in L^2(\Gamma)$
\begin{align*}
  c(\eps) \|v\|_{L^2(\Gamma)} &\leq \|Ev\|_{L^2(\gamma |\nabla\omega^\eps|)}  \leq C(\eps) \|v\|_{L^2(\Gamma)}
\end{align*}
and $c(\eps)\to 1$ and $C(\eps)\to 1$ as $\eps \to 0$.
\end{lemma}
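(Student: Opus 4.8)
The plan is to reduce the weighted norm of $Ev$ to a purely one-dimensional average of the Jacobian of the normal map $\Phi_t$, and then to read off the two constants as the infimum and supremum of that average over $\Gamma$.

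First I would apply the transformation formula \eqref{eq:fubini} to the nonnegative integrand $(Ev)^2$ in place of $v$. The defining property of the extension constant off the interface is that $Ev(x+tn(x)) = v(x)$ for $x\in\Gamma$ and $t\in(-\eps,\eps)$, so the inner integrand $(Ev)^2(x+tn(x)) = v(x)^2$ is independent of $t$. Since $\eps_0$ is small enough that the tubular neighbourhood coordinates $x=\bar x + t\, n(\bar x)$ form a valid diffeomorphism on ${\rm supp}(\gamma|\nabla\omega^\eps|)$ (cf. \cite{DelfourZolesio2011}), this gives
\begin{equation*}
  \|Ev\|_{L^2(\gamma|\nabla\omega^\eps|)}^2 = \int_\Gamma v(x)^2\, w_\eps(x)\,\d\sigma(x), \qquad w_\eps(x):=\frac{1}{2\eps}\int_{-\eps}^\eps |\det D\Phi_t(x)|\,\d t.
\end{equation*}
Setting $c(\eps)^2 = \inf_{x\in\Gamma} w_\eps(x)$ and $C(\eps)^2 = \sup_{x\in\Gamma} w_\eps(x)$ then immediately yields the claimed two-sided bound, since $c(\eps)^2 \|v\|_{L^2(\Gamma)}^2 \le \|Ev\|_{L^2(\gamma|\nabla\omega^\eps|)}^2 \le C(\eps)^2 \|v\|_{L^2(\Gamma)}^2$.

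It remains to show $w_\eps(x)\to 1$ uniformly in $x\in\Gamma$. For $\eps\le\eps_0$ small the determinant is positive, so the absolute value may be dropped, and I would write $\det D\Phi_t(x) = 1 + t\Delta d_D(x) + r_t(x)$ with $\sup_{x\in\Gamma,\,|t|\le\eps}|r_t(x)|\to 0$ as $\eps\to 0$ by \eqref{eq:det_to_one}. The linear term $t\Delta d_D(x)$ is odd in $t$ and hence integrates to zero over the symmetric interval $(-\eps,\eps)$, so $w_\eps(x) = 1 + \frac{1}{2\eps}\int_{-\eps}^\eps r_t(x)\,\d t$, whose remainder is bounded in absolute value by $\sup_{x,\,|t|\le\eps}|r_t(x)|$. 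Therefore $\sup_{x\in\Gamma}|w_\eps(x)-1|\to 0$, which gives $c(\eps)\to 1$ and $C(\eps)\to 1$ after taking square roots.

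The only genuinely delicate point is the uniform control of the remainder: one must verify that \eqref{eq:det_to_one} indeed delivers uniformity over $\Gamma$ and that $\Delta d_D$ is bounded on $\Gamma$ (which follows from $\partial D\in C^{3,1}$, making the principal curvatures bounded). Everything else is a routine consequence of the Fubini identity \eqref{eq:fubini} together with the symmetry that kills the first-order curvature correction.
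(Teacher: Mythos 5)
Your proposal is correct and follows essentially the same route as the paper's proof, which likewise applies \eqref{eq:fubini} to $|Ev|^2$ using the constancy of $Ev$ along normals and then concludes directly from \eqref{eq:det_to_one}. Your write-up merely makes explicit the details the paper leaves implicit (defining $c(\eps),C(\eps)$ as the square roots of the infimum and supremum of the averaged Jacobian, and the cancellation of the odd term $t\Delta d_D(x)$ over the symmetric interval), which is a faithful elaboration rather than a different argument.
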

\begin{proof}
According to \eqref{eq:fubini} and $(Ev)(x+tn(x)) =v(x)$, $x\in\Gamma$, we have 
\begin{align*}
   \int_{\Omega} |E_B f(x)|^2 |\nabla\omega^\eps| \gamma_B \d x = \frac{1}{2\eps}\int_{\partial B} |f(x)|^2 \int_{-\eps}^\eps  \det D\Phi_t(x) \d t\d\sigma(x),
\end{align*}
and the assertion follows from \eqref{eq:det_to_one}.
\end{proof}

Lemma~\ref{lem:extension} implies that $E_B$ and $E_H$ are bounded, injective and have closed range.

The next issue, concerns the approximation of diffuse integrals. We set
\begin{align*}
 \Gamma_t = \{x\in\Omega: {\rm dist}(x,\Gamma)<t\}.
\end{align*}
The following is a central estimate.
\begin{thm}\label{thm:estimate} 
 Let $1\leq p < \infty$. There exists a constant $C>0$ such that
 
 (i) if $v\in W^{1,p}(\Omega;\omega^\eps)$, then
 \begin{align*}
  \|v\|_{L^p(\Gamma_\eps;|\nabla\omega^\eps|\gamma)}^p \leq C(\|v\|_{L^p(\Gamma)}^p + \eps^{p-1} \|\partial_n v\|_{L^p(\Gamma_\eps;\omega^\eps)}^p).
 \end{align*}
 (ii)  if $v\in W^{2,p}(\Omega;\omega^\eps)$, then
 \begin{align*}
  \|v\|_{L^p(\Gamma_\eps;|\nabla\omega^\eps|\gamma)}^p \leq C(\|v\|_{L^p(\Gamma)}^p + \eps^{p} \|\partial_n v\|_{L^p(\Gamma)}^p+\eps^{2p-1}\|\partial_n^2 v\|_{L^p(\Gamma_\eps;\omega^\eps)}^p).
 \end{align*}
\end{thm}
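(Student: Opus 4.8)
The plan is to reduce both estimates to a one-dimensional inequality along the normal lines $t\mapsto x+tn(x)$ and to recover the degenerate weight $\omega^\eps$ a posteriori by a Fubini argument. I treat (i) and (ii) simultaneously, writing $k=1$ for (i) and $k=2$ for (ii).

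First I would rewrite the left-hand side with the transformation formula \eqref{eq:fubini},
\[
 \|v\|_{L^p(\Gamma_\eps;|\nabla\omega^\eps|\gamma)}^p=\frac{1}{2\eps}\int_\Gamma\int_{-\eps}^\eps |v(x+tn(x))|^p\,|\det D\Phi_t(x)|\,dt\,d\sigma(x),
\]
and use \eqref{eq:det_to_one} to fix $\eps_0$ so small that $c\le|\det D\Phi_t(x)|\le C$ uniformly on $\Gamma\times(-\eps,\eps)$. For fixed $x$ set $g(t)=v(x+tn(x))$; along almost every normal line $g$ inherits the Sobolev regularity of $v$, so I can Taylor-expand with integral remainder, $g(t)=v(x)+\int_0^t\partial_n v\,ds$ when $k=1$, and $g(t)=v(x)+t\,\partial_n v(x)+\int_0^t(t-s)\,\partial_n^2 v\,ds$ when $k=2$, where $\partial_n^j v$ denotes the $j$-th derivative in the direction $n(x)$ and the boundary values are the traces on $\Gamma$ (well defined since $\omega^\eps\ge\tfrac12$ inside $D$). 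The elementary bound $|a_0+\dots+a_k|^p\le C\sum_j|a_j|^p$ then splits the integrand into polynomial-in-$t$ ``trace'' terms and a remainder term.

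The trace terms integrate immediately: $\tfrac{1}{2\eps}\int_{-\eps}^\eps|t|^{jp}\,dt\simeq \eps^{jp}$ turns the contribution of $t^j\partial_n^j v(x)$ into $\eps^{jp}\|\partial_n^j v\|_{L^p(\Gamma)}^p$, producing exactly $\|v\|_{L^p(\Gamma)}^p$ and, when $k=2$, the extra term $\eps^p\|\partial_n v\|_{L^p(\Gamma)}^p$. For the remainder I would apply Hölder in $s$ to extract the kernel $(t-s)^{k-1}$ (bounding it by its supremum when $p=1$), which gives a pointwise bound of the form $|t|^{kp-1}\int_{I_t}|\partial_n^k v(s)|^p\,ds$, with $I_t$ the segment between $0$ and $t$; one checks the exponent via $((k-1)p'+1)\,p/p'=kp-1$. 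I then swap the $s$- and $t$-integrations by Fubini, so the inner $t$-integral becomes $\int_{|s|}^\eps|t|^{kp-1}\,dt=(\eps^{kp}-|s|^{kp})/(kp)$.

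The crux — and the step I expect to be the main obstacle — is that a naïve estimate controls $g$ by the \emph{unweighted} normal-derivative integral over the collar, which is genuinely larger than the weighted quantity $\|\partial_n^k v\|_{L^p(\Gamma_\eps;\omega^\eps)}$ appearing on the right, because $\omega^\eps$ degenerates to $0$ at the outer edge $s=\eps$. The Fubini swap is exactly what repairs this. On the collar $\omega^\eps(s)=(\eps-s)/(2\eps)$, so $2\eps\,\omega^\eps(s)=\eps-s\ge\eps-|s|$, and the mean value theorem yields
\[
 \frac{\eps^{kp}-|s|^{kp}}{kp}\le \eps^{kp-1}(\eps-|s|)\le \eps^{kp-1}(\eps-s)=2\eps^{kp}\,\omega^\eps(s),
\]
where the middle inequality is just $s\le|s|$. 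Thus the degenerate weight is reintroduced \emph{exactly}, and the remainder is bounded by $C\eps^{kp-1}\int_\Gamma\int_{-\eps}^\eps|\partial_n^k v|^p\,\omega^\eps\,ds\,d\sigma$. Finally, using the lower bound $|\det D\Phi_t|\ge c$ together with the inclusion of the normal collar in $\Gamma_\eps$, I transform back to identify this with $C\eps^{kp-1}\|\partial_n^k v\|_{L^p(\Gamma_\eps;\omega^\eps)}^p$. Collecting terms gives (i) for $k=1$ (with the factor $\eps^{p-1}$) and (ii) for $k=2$ (with the factor $\eps^{2p-1}$).
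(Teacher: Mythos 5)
Your proof is correct and takes essentially the same route as the paper's: the transformation formula \eqref{eq:fubini}, Taylor expansion with integral remainder plus H\"older along normal lines, and a Fubini swap that reintroduces the degenerate weight $\omega^\eps$. The only cosmetic difference lies in the last step, where you recover the weight by an explicit mean-value-theorem computation using the linear profile $\omega^\eps=(\eps-d_D)/(2\eps)$ on the collar, while the paper passes to the nested collars $\Gamma_t$ and cites the substitution argument of \cite[Section 5.1]{BES2014} --- the same fact in different clothing.
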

\begin{proof}
  (i) Using the basic inequality $(a+b)^p \leq 2^{p-1}(|a|^p + |b|^p)$, $a,b\in\RR$, we obtain by using the fundamental theorem of calculus and H\"olders inequality
  \begin{align*}
  |v(x+tn(x))|^p \leq 2^{p-1}( |v(x)|^p + |t|^{p-1} \int_{-|t|}^{|t|} |\partial_n v(x+sn(x))|^p \d s).
  \end{align*}
  Using the latter in \eqref{eq:fubini} and using \eqref{eq:det_to_one}, we obtain
\begin{align*}
  \|v\|_{L^p(\Omega;|\nabla\omega^\eps|\gamma)|)}^p \leq 2^{p-1}(\|v\|_{L^p(\Gamma)}^p+ \eps^{p-2}\int_\Gamma\int_0^\eps\int_{-t}^t |\partial_n v(\Phi_s(x))|^p\d s\d t\d\sigma).
\end{align*}
Using Fubini's theorem we further may write
\begin{align*}
 \frac{1}{\eps} \int_\Gamma\int_0^\eps\int_{-t}^t |\partial_n v(\Phi_s(x))|^p\d s\d t\d\sigma &\leq C\frac{1}{\eps}\int_0^\eps \int_{\Gamma_t} |\partial_n v(x)|^p\d x\d t.
\end{align*}
As in \cite[Section 5.1]{BES2014} using the transformation $s=-S(t/\eps)$, one completes the proof showing
$$
\frac{1}{\eps}\int_0^\eps \int_{\Gamma_t} |\partial_n v(x)|^p\d x\d t\leq \int_{\Gamma_\eps} |\partial_n v(x)|^p\omega^\eps\d x.
$$
(ii) Applying twice the fundamental theorem of calculus yields
\begin{align*}
 v(x+tn(x))=v(x) + t \partial_nv(x) + \int_0^t\int_0^s \partial_n^2v(x+rn(x))\d r\d s.
\end{align*}
The proof is then finished with similar arguments as in (i).
\end{proof}

With the usual modifications one shows that Theorem~\ref{thm:estimate} also holds for $p=\infty$.
We start with a diffuse trace lemma, cf.\@ \cite[Theorem~4.2]{BES2014}. We give a different proof.
\begin{lemma}\label{lem:diffusetrace}
There exists a constant $C >0$ such that
\begin{equation}
        \|v\|_{L^2(\gamma|\nabla \omega^\eps|)} \leq C \| v \|_{\H^\eps} \text{ for all } v \in \H^\eps.
\end{equation}
\end{lemma}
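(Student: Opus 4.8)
The plan is to prove the diffuse trace inequality
\begin{equation*}
 \|v\|_{L^2(\gamma|\nabla\omega^\eps|)}\leq C\|v\|_{\H^\eps}\qquad\text{for all }v\in\H^\eps
\end{equation*}
by invoking part (i) of Theorem~\ref{thm:estimate} with $p=2$. That estimate reads
\begin{equation*}
 \|v\|_{L^2(\Gamma_\eps;|\nabla\omega^\eps|\gamma)}^2\leq C\big(\|v\|_{L^2(\Gamma)}^2+\eps\,\|\partial_n v\|_{L^2(\Gamma_\eps;\omega^\eps)}^2\big),
\end{equation*}
and since $\mathrm{supp}(\gamma|\nabla\omega^\eps|)\subset\Gamma_\eps$ the left-hand side equals $\|v\|_{L^2(\gamma|\nabla\omega^\eps|)}^2$. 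The second term on the right is immediately controlled because $|\partial_n v|\leq|\nabla v|$ pointwise, so $\eps\,\|\partial_n v\|_{L^2(\Gamma_\eps;\omega^\eps)}^2\leq\eps\,\|\nabla v\|_{L^2(\omega^\eps)}^2\leq\eps\,\|v\|_{\H^\eps}^2$, which for $0<\eps\leq\eps_0$ is bounded by $\eps_0\|v\|_{\H^\eps}^2$. Thus everything reduces to controlling the genuine trace term $\|v\|_{L^2(\Gamma)}$.

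The main obstacle is therefore the sharp trace term $\|v\|_{L^2(\Gamma)}^2$, i.e.\ the $L^2$-norm on the \emph{sharp} boundary $\Gamma=\partial D\cap\mathrm{supp}(\gamma)$, which must be bounded by $\|v\|_{\H^\eps}$ uniformly in $\eps$. First I would recall the standard trace theorem on the fixed Lipschitz (indeed $C^{3,1}$) domain $D$, which gives $\|v\|_{L^2(\partial D)}\leq C\|v\|_{H^1(D)}$ with a constant depending only on $D$ and hence independent of $\eps$. Since $\Gamma\subset\partial D$ this yields $\|v\|_{L^2(\Gamma)}\leq C\|v\|_{H^1(D)}$. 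The key point that ties this back to the diffuse norm is the already-recorded inequality $\|v\|_{H^1(D)}\leq\sqrt{2}\,\|v\|_{\H^\eps}$, coming from $\omega^\eps\geq 1/2$ on $D$; combining these gives $\|v\|_{L^2(\Gamma)}\leq C\|v\|_{\H^\eps}$ with $\eps$-independent $C$.

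Putting the two estimates together yields
\begin{equation*}
 \|v\|_{L^2(\gamma|\nabla\omega^\eps|)}^2\leq C\big(\|v\|_{\H^\eps}^2+\eps_0\|v\|_{\H^\eps}^2\big)=C(1+\eps_0)\|v\|_{\H^\eps}^2,
\end{equation*}
which is the claim after adjusting the constant. The whole argument is short and rests entirely on Theorem~\ref{thm:estimate}(i); the only genuinely nontrivial input is the uniform-in-$\eps$ control of the sharp trace, which is handled cleanly by passing through $\|\cdot\|_{H^1(D)}$ via the two cited inequalities. I would keep the proof to a few lines, simply citing Theorem~\ref{thm:estimate}, the classical trace theorem on $D$, and the observation $\sqrt2\|v\|_{\H^\eps}\geq\|v\|_{H^1(D)}$.
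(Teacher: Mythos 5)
Your proof is correct and follows essentially the same route as the paper's: the paper likewise combines the classical trace theorem on $D$ with the inequality $\|v\|_{H^1(D)}\leq C\|v\|_{\H^\eps}$ (from $\omega^\eps\geq 1/2$ on $D$) and then concludes via Theorem~\ref{thm:estimate}(i). You merely spell out the details the paper leaves implicit, such as bounding the term $\eps\|\partial_n v\|^2_{L^2(\Gamma_\eps;\omega^\eps)}$ by $\eps_0\|v\|^2_{\H^\eps}$.
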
 
\begin{proof}
 The usual trace theorem \cite{Adams1975} assures that $\|v\|_{L^2(\Gamma)}\leq C \|v\|_{H^{1}(D)}\leq C \|v\|_{\H^\eps}$. The result then follows from Theorem~\ref{thm:estimate} (i).
\end{proof}

Operator perturbations induced by the diffuse integrals can be treated using the following.
\begin{lemma}\label{lem:error_diffuse_integral}
  Let $1\leq  p \leq \infty$ and $v\in W^{k,p}(\Omega,\omega^\eps)$, $k\in \{0,1,2\}$. Then there exists a constant $C>0$ independent of $\eps$ such that
 
 (i) if $k\leq 1$ there holds
 \begin{align*}
  |\int_\Omega v \omega^\eps \d x - \int_D v \d x| \leq C \eps^{1+k-\frac{1}{p}} \|v\|_{W^{k,p}(\Omega;\omega^\eps)},
 \end{align*}
 (ii) if $k=1$, then
 \begin{align*}
  \|v-Ev\|_{L^p(|\nabla\omega^\eps|\gamma)}\leq C \eps^{1-\frac{1}{p}} \|v\|_{W^{1,p}(\Omega;\omega^\eps)},
 \end{align*}
 (iii) if $k=2$, then
 \begin{align*}
  \|v-Ev\|_{L^p(\gamma|\nabla\omega^\eps|)} \leq C (\eps \|\partial_n v\|_{L^p(\Gamma)}+\eps^{2-\frac{1}{p}}\|\partial_n^2 v\|_{L^p(\Gamma_\eps;\omega^\eps)}).
 \end{align*}
 (iv) if $k=2$, $v=0$ on $\Gamma$ and $w\in W^{1,2}(\Omega;\omega^\eps)$, then
 \begin{align*}
  | \int_\Omega vw |\nabla\omega^\eps|\gamma\d x| \leq C \eps^{\frac{3}{2}} \|v\|_{W^{2,2}(\Omega;\omega^\eps)} \|w\|_{W^{1,2}(\Omega;\omega^\eps)}.
 \end{align*}
\end{lemma}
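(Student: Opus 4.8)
The naive route---Cauchy--Schwarz in the measure $\gamma|\nabla\omega^\eps|\,\d x$ followed by Theorem~\ref{thm:estimate} (ii) applied to $v$ (whose trace on $\Gamma$ vanishes)---only yields $\|v\|_{L^2(\gamma|\nabla\omega^\eps|)}\le C\eps\|\partial_n v\|_{L^2(\Gamma)}+C\eps^{3/2}\|\partial_n^2 v\|$, i.e. an $O(\eps)$ estimate for the pairing, which is weaker than the asserted $O(\eps^{3/2})$. The missing half power can only come from a genuinely bilinear cancellation, so the plan is to expand the integrand in the normal direction and exploit the oddness of its first-order part. Concretely, I would start from the tube representation \eqref{eq:fubini},
\[
\int_\Omega vw\,|\nabla\omega^\eps|\gamma\,\d x=\frac{1}{2\eps}\int_\Gamma\int_{-\eps}^\eps v(\Phi_t(x))\,w(\Phi_t(x))\,\det D\Phi_t(x)\,\d t\,\d\sigma(x).
\]

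Since $v=0$ on $\Gamma$, the fundamental theorem of calculus gives $v(\Phi_t(x))=t\,\partial_n v(x)+r_v(t,x)$ with a remainder controlled by $|r_v(t,x)|\le\int_0^{|t|}\int_0^{|s|}|\partial_n^2 v(\Phi_\rho(x))|\,\d\rho\,\d s$; likewise $w(\Phi_t(x))=w(x)+\int_0^t\partial_n w(\Phi_r(x))\,\d r$, and from the smoothness of $\partial D$ (together with \eqref{eq:det_to_one}) one has $\det D\Phi_t(x)=1+t\,\kappa(x)+O(t^2)$ with $\kappa=\Delta d_D$ bounded. Multiplying out splits the inner $t$-integral into a part carried by the boundary value $w(x)$ and a genuinely cross part containing $\int_0^t\partial_n w$.

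The decisive step is the cancellation $\int_{-\eps}^\eps t\,\d t=0$. In the $w(x)$-part it annihilates the leading contribution $w(x)\,\partial_n v(x)\int_{-\eps}^\eps t\,\d t$, so only the curvature term (from $t\,\kappa$) and the remainder $r_v$ survive; after the prefactor $1/(2\eps)$ these are of order $\eps^2$ and are bounded by $C\eps^2\|\partial_n v\|_{L^2(\Gamma)}\|w\|_{L^2(\Gamma)}$ and higher. The binding contribution is the cross term $\tfrac{1}{2\eps}\int_\Gamma\partial_n v(x)\int_{-\eps}^\eps t\big(\int_0^t\partial_n w(\Phi_r)\,\d r\big)\det D\Phi_t\,\d t\,\d\sigma$: here only \emph{one} derivative of $w$ is available, and it can be extracted solely as the tube quantity $(\int_0^{|t|}|\partial_n w(\Phi_r)|^2\d r)^{1/2}$ via Cauchy--Schwarz. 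This is exactly what turns the ``true'' $\eps^2$ into $\eps^{3/2}$: estimating $|\int_0^t\partial_n w|\le|t|^{1/2}(\int_0^{|t|}|\partial_n w(\Phi_r)|^2\d r)^{1/2}$ and using Cauchy--Schwarz in $t$ gives an inner bound $\le C\eps^{5/2}(\int_{-\eps}^\eps|\partial_n w(\Phi_r)|^2\omega^\eps(\Phi_r)\,\d r)^{1/2}$, whence a factor $\eps^{3/2}$ after the prefactor.

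Finally I would convert all surviving tube integrals into the weighted norms on the right-hand side. The weight enters through the elementary inequality $\eps-|s|\le 2\eps\,\omega^\eps(\Phi_s(x))$ for $|s|<\eps$ (valid for the chosen $S$), applied after a Fubini rearrangement of the iterated $s,t$-integrals precisely as in the proof of Theorem~\ref{thm:estimate}; the change of variables $y=\Phi_s(x)$ with Jacobian $\det D\Phi_s\approx1$ then identifies, for instance, $\int_\Gamma\int_{-\eps}^\eps|\partial_n^2 v(\Phi_s)|^2\omega^\eps(\Phi_s)\,\d s\,\d\sigma\le C\|\partial_n^2 v\|_{L^2(\Omega;\omega^\eps)}^2$, and similarly for $\partial_n w$. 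The genuine boundary traces $\|\partial_n v\|_{L^2(\Gamma)}$ and $\|w\|_{L^2(\Gamma)}$ are controlled by $\|v\|_{W^{2,2}(\Omega;\omega^\eps)}$ and $\|w\|_{W^{1,2}(\Omega;\omega^\eps)}$ via the trace theorem together with $\omega^\eps\ge\tfrac12$ on $D$. Collecting the pieces bounds every term by $C\eps^{3/2}\|v\|_{W^{2,2}(\Omega;\omega^\eps)}\|w\|_{W^{1,2}(\Omega;\omega^\eps)}$. The main obstacle is conceptual rather than computational: recognizing that a direct norm estimate is off by $\eps^{1/2}$, that the missing power is recovered only through the first-order cancellation in the normal variable, and that the low regularity of $w$ (only $W^{1,2}$) is exactly what pins the exponent to $3/2$---all while keeping every estimate inside the $\omega^\eps$-weighted norms.
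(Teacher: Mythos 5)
Your proposal addresses only assertion (iv), and for that part it is essentially correct --- in fact it supplies more than the paper does, since the paper's entire proof of (i) and (iv) is a citation of \cite{BES2014} (Theorems~5.1, 5.2 and 5.6). Your mechanism is the right one and is the same one underlying the cited result: the tube representation \eqref{eq:fubini}, the expansion $v(\Phi_t(x))=t\,\partial_n v(x)+r_v(t,x)$ made possible by $v=0$ on $\Gamma$, the cancellation of the odd leading term against $\int_{-\eps}^{\eps}t\,\d t=0$, and the recovery of the weight via $\eps-|s|\leq 2\eps\,\omega^\eps(\Phi_s(x))$ after a Fubini rearrangement. Your cross-term bookkeeping (Cauchy--Schwarz in $r$, then in $t$, yielding the inner bound $C\eps^{5/2}\big(\int_{-\eps}^{\eps}|\partial_n w(\Phi_r)|^2\omega^\eps(\Phi_r)\,\d r\big)^{1/2}$ and hence $\eps^{3/2}$ after the prefactor) checks out, as does the control of the boundary traces through the trace theorem and $\omega^\eps\geq 1/2$ on $D$. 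One imprecision: the term $r_v(t,x)\,w(x)$ is \emph{not} $O(\eps^2)$ in the weighted $L^2$ setting of the lemma; since $\int_{-\eps}^{\eps}\omega^\eps(\Phi_\rho)\,\d\rho=\eps$ costs a half power, weighted Cauchy--Schwarz gives only $C\eps^{3/2}\|w\|_{L^2(\Gamma)}\|\partial_n^2v\|_{L^2(\Gamma_\eps;\omega^\eps)}$ (the $\eps^2$ rate would require $\partial_n^2v\in L^\infty$). This does not break anything, because $\eps^{3/2}$ is exactly the asserted order.

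The genuine gap is that assertions (i), (ii) and (iii) are never proved, and they do not follow from your bilinear computation. Assertion (i) is a volume estimate comparing $\int_\Omega v\,\omega^\eps\d x$ with $\int_D v\,\d x$, which needs its own argument (the paper takes it from \cite{BES2014}). Assertions (ii) and (iii) are linear estimates for $v-Ev$ in $L^p$ for \emph{all} $1\leq p\leq\infty$, not just a pairing against a test function $w$; the paper's route is short and you already have the tools: apply Theorem~\ref{thm:estimate}~(i), respectively (ii), to the function $v-Ev$, noting that $v-Ev=0$ on $\Gamma$ and that $Ev$ is constant along normals, so $\partial_n(v-Ev)=\partial_n v$ and $\partial_n^2(v-Ev)=\partial_n^2 v$; the trace terms then drop out and the stated rates $\eps^{1-1/p}$ and $\eps\|\partial_n v\|_{L^p(\Gamma)}+\eps^{2-1/p}\|\partial_n^2v\|_{L^p(\Gamma_\eps;\omega^\eps)}$ follow at once. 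As written, three of the four claims of the lemma are unsupported, so the proposal is incomplete even though its treatment of the hardest part is sound.
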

\begin{proof}
 Assertions (i) and (iv) are proven in \cite[Theorem~5.1, Theorem~5.2, Theorem~5.6]{BES2014}. 
 To prove (ii) we apply Theorem~\ref{thm:estimate} (i) to $v-Ev$.
 As $v-Ev=0$ on $\Gamma$ and $\partial_n(v-Ev)=\partial_n v$, we obtain
 \begin{align*}
  \|v-Ev\|_{L^p(\Gamma_\eps;|\nabla\omega^\eps|\gamma)}\leq C \eps^{1-\frac{1}{p}} \|\partial_n v\|_{L^{p}(\Gamma_\eps;\omega^\eps)}.
 \end{align*}
 This yields the assertion. 
 (iii) is a direct consequence of Theorem~\ref{thm:estimate} (ii).
\end{proof}

A further tool in studying the diffuse domain method is the following lemma \cite[Lemma~4.9]{BES2014}.
\begin{lemma}[Poincar\'e-Friedrichs-type inequality]\label{lemma:diffusefriedrichs}
There exists a constant $C >0$ such that
\begin{equation}
        \Vert v \Vert_{\H^\eps}^2 \leq C (\|\nabla v\|^2_{L^2(\Omega;\omega^\eps)} + \|v\|^2_{L^2(\Omega;\gamma|\nabla\omega^\eps|)}) \quad\text{for all } v \in \H^\eps.
\end{equation}
\end{lemma}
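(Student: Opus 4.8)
The plan is to reduce everything to the classical Poincar\'e--Friedrichs inequality on the sharp domain $D$ and then transfer back to the weighted space, absorbing all transfer errors by their explicit factors of $\eps$. Since $\|v\|_{\H^\eps}^2=\|\nabla v\|_{L^2(\Omega;\omega^\eps)}^2+\|v\|_{L^2(\Omega;\omega^\eps)}^2$ and the gradient term already appears on the right-hand side, it suffices to bound the zero-order part $\|v\|_{L^2(\Omega;\omega^\eps)}^2$. Using $0\le\omega^\eps\le 1$ together with $\omega^\eps\equiv 1$ on $\{d_D\le-\eps\}$ and $\omega^\eps\ge\tfrac12$ on $D$, I would split $\int_\Omega v^2\omega^\eps\,\d x=\int_D v^2\omega^\eps\,\d x+\int_{\{0<d_D<\eps\}}v^2\omega^\eps\,\d x\le \|v\|_{L^2(D)}^2+\int_{\{0<d_D<\eps\}}v^2\omega^\eps\,\d x$, so the task splits into controlling the sharp bulk norm $\|v\|_{L^2(D)}^2$ and the thin exterior collar term.

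For the bulk term I would apply the classical Poincar\'e--Friedrichs inequality on $D$ using the boundary piece $\Gamma_1=\partial D\cap\{\gamma\equiv 1\}$, which has positive surface measure by the construction of $\gamma$; this gives $\|v\|_{L^2(D)}^2\le C(\|\nabla v\|_{L^2(D)}^2+\|v\|_{L^2(\Gamma_1)}^2)$. The gradient is converted directly, since $\omega^\eps\ge\tfrac12$ on $D$ yields $\|\nabla v\|_{L^2(D)}^2\le 2\|\nabla v\|_{L^2(\Omega;\omega^\eps)}^2$. The sharp trace $\|v\|_{L^2(\Gamma_1)}^2$ is converted into the diffuse trace by a \emph{reverse} version of the layer estimate underlying Theorem~\ref{thm:estimate}: writing $v(\Phi_t(x))=v(x)+\int_0^t\partial_n v(\Phi_s(x))\,\d s$ and averaging over $t\in(-\eps,\eps)$ against the co-area representation \eqref{eq:fubini} (with $\det D\Phi_t\to 1$ from \eqref{eq:det_to_one} and $\gamma\equiv1$ on $\Gamma_1$), one recovers $\|v\|_{L^2(\Gamma_1)}^2\le C\|v\|_{L^2(\Omega;\gamma|\nabla\omega^\eps|)}^2+C\eps\,\|\nabla v\|_{L^2(\Omega;\omega^\eps)}^2$. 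Combining these bounds yields $\|v\|_{L^2(D)}^2\le C R$, where $R=\|\nabla v\|_{L^2(\Omega;\omega^\eps)}^2+\|v\|_{L^2(\Omega;\gamma|\nabla\omega^\eps|)}^2$ denotes the target right-hand side.

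For the collar term I would again use the fundamental theorem of calculus along the normal rays together with \eqref{eq:fubini}; since the collar $\{0<d_D<\eps\}$ has width $\eps$ and $\omega^\eps$ decays to zero across it, this produces the gain $\int_{\{0<d_D<\eps\}}v^2\omega^\eps\,\d x\le C\eps\big(\|v\|_{L^2(\partial D)}^2+\|\nabla v\|_{L^2(\Omega;\omega^\eps)}^2\big)$, and the classical trace theorem $\|v\|_{L^2(\partial D)}^2\le C\|v\|_{H^1(D)}^2=C(\|v\|_{L^2(D)}^2+\|\nabla v\|_{L^2(D)}^2)$ then bounds the collar by $C\eps\,\|v\|_{H^1(D)}^2$. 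Substituting the bulk bound $\|v\|_{L^2(D)}^2\le CR$ and $\|\nabla v\|_{L^2(D)}^2\le 2\|\nabla v\|_{L^2(\Omega;\omega^\eps)}^2\le 2R$ shows that the collar term is $\le C\eps R$. Adding the two contributions gives $\|v\|_{L^2(\Omega;\omega^\eps)}^2\le (C+C\eps)R\le (C+C\eps_0)R$, so the constant is uniform in $\eps$.

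The hard part will be the reverse trace estimate and, more subtly, verifying that every transfer error genuinely carries a factor of $\eps$. There is an apparent circularity, because both the reverse trace bound and the collar bound produce error terms proportional to $\|v\|_{H^1(D)}^2$, hence to $\|v\|_{L^2(D)}^2$, which is the very quantity one is estimating; this is resolved precisely because those errors come multiplied by $\eps\le\eps_0$, so they are either absorbed or harmlessly dominated by $R$ once the bulk estimate is in hand. An alternative would be a compactness/contradiction argument that extracts a weak $H^1(D)$-limit with vanishing gradient and vanishing trace on $\Gamma_1$; however, to obtain a constant independent of $\eps$ one must then separately treat the regimes $\eps_k\to 0$ and $\eps_k$ bounded away from $0$, which makes the direct transfer argument above the cleaner route.
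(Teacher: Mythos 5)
The paper does not actually prove this lemma --- it is quoted verbatim from \cite[Lemma~4.9]{BES2014} --- so there is no internal proof to compare against; your attempt has to be judged on its own. Your overall architecture (split $\|v\|^2_{L^2(\Omega;\omega^\eps)}$ into bulk and exterior collar, apply the classical Poincar\'e--Friedrichs inequality on $D$ with the boundary piece $\Gamma_1$, convert the gradient using $\omega^\eps\ge 1/2$ on $D$, and transfer the sharp trace and the collar term with $O(\eps)$ errors) is viable, and the intermediate inequalities you claim are in fact true. However, the derivations you sketch for the two transfer steps share a genuine hole: both produce error terms that are \emph{unweighted} gradient integrals over the $\eps$-tube, and such integrals are not controlled by $\|\nabla v\|^2_{L^2(\Omega;\omega^\eps)}$.

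Concretely, averaging $v(x)=v(\Phi_t(x))-\int_0^t\partial_n v(\Phi_s(x))\,\d s$ over the full interval $t\in(-\eps,\eps)$ and invoking \eqref{eq:fubini} leaves you with a remainder of the form $C\eps\int_{\{|d_D|<\eps\}}|\partial_n v|^2\,\d x$, with no weight. On the exterior half of the tube the weight $\omega^\eps(x)=\tfrac12(1-d_D(x)/\eps)$ decays to zero at the outer edge $d_D=\eps$, so no bound $\int_{\{0<d_D<\eps\}}|\nabla v|^2\,\d x\le C\int_\Omega|\nabla v|^2\omega^\eps\,\d x$ can hold: an element of $\H^\eps$ with $|\nabla v|^2\sim(\eps-d_D)^{-1}$ near the outer edge has finite $\omega^\eps$-weighted norm but infinite unweighted collar integral (membership in $\H^\eps$ constrains $v$ only through $\omega^\eps$-weighted quantities). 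The same defect occurs in your collar estimate, whose outward fundamental-theorem-of-calculus argument again yields $\int_{\{0<d_D<\eps\}}|\partial_n v|^2\,\d x$ without the weight. Both steps are repairable, and the repair is where the real content of the lemma lies. For the reverse trace estimate, average only over the \emph{interior} side $t\in(-\eps,0)$: then every error term lives in $D$, where $\omega^\eps\ge1/2$, and on the interior tube over $\Gamma_1$ one has $\gamma|\nabla\omega^\eps|=1/(2\eps)$, so the averaged zero-order term is exactly (a multiple of) the diffuse trace norm. For the collar term, keep the weight inside the integral from the start, using that $\omega^\eps(\Phi_s(x))\ge\omega^\eps(\Phi_t(x))$ for $0\le s\le t$ (monotonicity of $\omega^\eps$ along outward normals); this gives $\int_{\{0<d_D<\eps\}}v^2\omega^\eps\,\d x\le C\eps\|v\|^2_{L^2(\partial D)}+C\eps^2\|\nabla v\|^2_{L^2(\Omega;\omega^\eps)}$. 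With these two corrections your bookkeeping, including the resolution of the apparent circularity through the factor $\eps\le\eps_0$, goes through and yields a constant independent of $\eps$.
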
 

\end{appendix}
\end{document}